\documentclass[12pt]{amsart}
\usepackage[margin=1in,letterpaper]{geometry}
\usepackage{graphicx,amssymb,amsmath,amsthm,enumitem,xcolor} 
\usepackage[scr=boondox]{mathalpha}
\usepackage[colorlinks, linktocpage, breaklinks]{hyperref}

\newtheorem{theorem}{Theorem}[section]	
\newtheorem{lemma}[theorem]{Lemma}
\newtheorem{proposition}[theorem]{Proposition}
\newtheorem{corollary}[theorem]{Corollary}
\newtheorem{definition}[theorem]{Definition}
\newtheorem{example}[theorem]{Example}
\newtheorem{question}{Question}

\usepackage{tikz}

\newcommand{\dualarrows}{{\,\begin{tikzpicture}[baseline={([yshift=-0.7ex]current bounding box.center)}]
        \draw[line width=0.7pt, ->] (0,0.27) -- (0.4,0); \draw[line width = 0.7pt,->] (0,0) -- (0.4,0.27);
    \end{tikzpicture}\,}}

\title{On Large Covers and the Strong Closed Discrete Game}

\author{Christopher Caruvana}
\address{School of Sciences, Indiana University Kokomo, 2300 S. Washington Street, Kokomo, IN 46902 USA}
\email{caruvana@gmail.com}
\urladdr{https://ccaruvana.github.io}

\author{Jared Holshouser}
\address{Unaffiliated}
\email{JHolshouser1321@gmail.com}
\urladdr{https://jaredholshouser.github.io/}

\date{\today}

\subjclass{91A44; 54D20; 54C35; 54A20.}

\keywords{Selection principles, selection games, limited-information strategies,
large covers, \(\omega\)-covers, \(C_p\)-spaces.}

\begin{document}

\begin{abstract}
    We strengthen a recent selection game equivalence proved by L. Chiozini
    involving spaces and their associated spaces of continuous real-valued functions.
    This strengthening establishes that the Rothberger game is perfect-
    and Markov-information dual to the strong closed discrete game
    on the space of real-valued continuous functions.
    In the process, we extend the theory of strategic equivalences for the second player
    in variations of the Menger and Rothberger games involving large covers
    while noting explicitly how large covers present serious obstacles
    to applying strategy translation results used by the authors in similar
    scenarios in previous papers.
    We also briefly comment on subbasic selection games introduced by D.~Guerrero S\'{a}nchez
    and V.V.~Tkachuk.
\end{abstract}

\maketitle

\section{Introduction}

In a recent paper \cite{Chiozini}, L.~Chiozini asserted that the point-open game on a Tychonoff space \(X\)
is perfect-information equivalent to a modified closed discrete selection game on \(C_p(X)\),
the strong closed discrete game.
We take this result and expand it to the context of limited-information strategies.
In the process of proving the expanded result, we discovered that a crucial component, Lemma 3.4(a), of
Chiozini's argument is not true as stated, but a slight modification, which is surely in the spirit of the lemma,
guarantees its conclusion.
In the context of limited-information strategies, the modification is not slight and the details of our proofs
rely heavily on the combinatorics of large covers.
This paper then has two main goals:
to extend the results of Chiozini and to clarify the combinatorics of large covers.

Our primary contribution to the theory of large covers is the establishment of many results
concerning the second player's strategies in games like
\(\mathsf G_{\mathrm{fin}}(\Lambda_X, \Lambda_X)\) and \(\mathsf G_1(\Lambda_X, \Lambda_X)\).
It is in this study that we discover an important difference between the known theory
of strategic implications between these types of games and those where the cover types
are ideal-covers.
In particular, if we restrict our attention to \(T_1\) spaces,
\cite[Theorem 4.17]{CCHMengerRothbergerSurvey} establishes that the properties
of being Markov Rothberger and Markov \(\omega\)-Rothberger are equivalent to the
space being countable; by way of contrast, Theorem
\ref{thm:CountableNotMarkoveLambdaRoth} shows that the property of being Markov
\(\lambda\)-Rothberger fails to hold even for the countable discrete space.

The paper is generally organized as follows.
We finish this introduction by elaborating on the paper's motivation
in Section \ref{subsec:Motivation};
Section \ref{sec:Definitions} establishes all definitions and notation to be used
with some additional commentary on when certain cover types exist;
Section \ref{section:CoveringPrinciples} offers brief commentary on subbasic
selection games;
Section \ref{section:LargeCovers} offers a thorough treatment of large covers
in the context of standard selection games;
and Section \ref{section:TheMainTheorem} is devoted to the proof of the main theorem.

\subsection{Motivating Commentary} \label{subsec:Motivation}

We offer some commentary here regarding the motivation for this paper
and refer the reader to Section \ref{sec:Definitions} for explicit
definitions.

We first note with Example \ref{example:FinitelyManyFunctions}
that Lemma 3.4(a) of \cite{Chiozini} is false as stated.
However, as we will clarify in the proof of Lemma \ref{lem:sCDBelow},
the lemma is correct in spirit; that is, to guarantee the conclusion of the lemma,
one need only guarantee that the sequence of functions chosen are pairwise distinct.
This is not a problem in the context of perfect-information
strategies, which is the context of \cite{Chiozini}.
However, when considering limited-information strategies, it does
become a legitimate obstacle, which we will demonstrate how to overcome.

The origin of this issue seems to be in the difference between treating
the result of the second player's choices as a sequence versus as a set;
compare, for example, Definition 2.1 and Definition 3.1 of \cite{Chiozini}.
Particularly, in Definition 2.1, an open cover \(\mathscr U\) of a space \(X\)
is said to be a large cover if \(\{ U \in \mathscr U : x \in U \}\) is infinite
for each \(x \in X\).
In the context of the game \(\Lambda FO(X)\), the condition determining the victory
of the first player is in terms of the indices, not the open sets themselves.
Note that, for a countable space \(X\), by passing through a bijection \(\omega \to \omega^2\),
the first player can guarantee that the second player plays a set \(\{ U_n : n \in \omega \}\)
of open subsets such that, for each \(x \in X\), \(\{ n \in \omega : x \in U_n \}\) is an infinite
subset of \(\omega\).
However, it need not be the case that \(\{ U_n : x \in U_n \}\) is infinite.
Indeed, with Theorem \ref{thm:CountableNotMarkoveLambdaRoth} we will show that,
if the second player forgets their previous moves, in most spaces of interest,
they cannot guarantee a legitimate large cover of the space.

\begin{example} \label{example:FinitelyManyFunctions}
    Any finite set of functions is strongly closed discrete, but a finite set of functions
    which respects Process A of \cite{Chiozini} may generate a large cover.
\end{example}
\begin{proof}
    Obviously, if \(g_n = \mathbf 0\), the constant zero function, then Process A is
    respected but the resulting set of open subsets of \(X\) may be a large cover of \(X\)
    when \(X\) is infinite.
    We show that this can still occur if we require the functions to be nonzero.

    Our space will be \(\omega\) with the usual discrete topology.
    Let \(\gamma : \wp(\omega) \setminus \{ \varnothing \} \to \omega\) be a choice
    function such that, if \(A \subseteq \omega\) is such that \(A \setminus \{0,1\} \neq \varnothing\),
    then \(\gamma(A) \not\in \{ 0, 1 \}\).
    For each \(n \in \omega\), let \(W_{2n} = \omega \setminus \{0\}\) and
    \(W_{2n+1} = \omega \setminus \{1\}\).
    Then define \(F_n\) and \(p_n \in \omega\) recursively as follows.
    Initialize \(F_0 = \varnothing\) and \(p_0 = \gamma(W_0 \setminus F_0) = \gamma(W_0) \not\in \{0,1\}\).
    For \(n \in \omega\), let \(F_{n+1} = F_n \cup \{ p_n \}\) and
    \(p_{n+1} = \gamma(W_{n+1} \setminus F_{n+1})\).
    By construction \(\{ p_n : n \in \omega\}\) consists of pairwise distinct elements.
    Let \(g_{2n} = \mathbf 1_{\{0\}}\) and \(g_{2n+1} = \mathbf 1_{\{1\}}\) for each \(n \in \omega\).
    Observe that \(W_{2n} = g_{2n}^{-1}(-2^{-2n}, 2^{-2n}) = \omega \setminus \{ 0 \}\) and
    \(W_{2n+1} = g_{2n_1}^{-1}(-2^{-2n-1}, 2^{-2n-1}) = \omega \setminus \{ 1 \}\)
    so
    \(V_{2n} = W_{2n} \setminus \{ \gamma(W_{2n} \setminus F_{2n}) \} = \omega \setminus \{ 0, p_{2n} \}\)
    and \(V_{2n+1} = W_{2n+1} \setminus \{ \gamma(W_{2n+1} \setminus F_{2n+1}) \}
    = \omega \setminus \{ 1, p_{2n+1} \}\)
    Since the \(\{ p_n : n \in \omega \}\) are pairwise distinct, \(\{ V_n : n \in \omega \}\) is a large cover
    of \(\omega\).

    Since \(\{ g_n : n \in \omega \}\) is a doubleton, it is clearly strongly closed discrete.
    Note also that, since \(F_n \cap \{ 0,1 \} = \varnothing\) for each \(n \in \omega\) by construction,
    \(g_n \in [\mathbf 0; F_n , 2^{-n}]\) for each \(n \in \omega\).
    Hence, Process A is respected, but
    \(\{ V_n : n \in \omega \}\) is a large cover of \(\omega\).
\end{proof}

\section{Preliminary Definitions and Conventions} \label{sec:Definitions}

For a set \(X\), we will observe the convention throughout that
\([X]_+^{<\aleph_0}\) consists of all \emph{nonempty} finite subsets of \(X\)
to avoid particular trivialities.
Also, when we have occasion to discuss finite-open style games, we will use
the notation \(X_{\mathrm{fin}}\) in place of \([X]_+^{<\aleph_0}\) to simplify
the game notation.

We use the word \emph{space} to mean a \emph{nonempty} topological space.
We make no general assumptions about separation axioms throughout and explicitly
state separation axioms as we assume them.
As usual, any undefined terms are to be understood as in \cite{Engelking}.

\subsection{Cover Types and Selection Principles}

For a space \(X\), we let \(\mathscr T_X\) denote the set of all proper and nonempty open subsets of \(X\)
to avoid certain trivialities, which we will elaborate on below.
Consequently, all open covers are assumed to be \emph{nontrivial}; in particular, we let \(\mathcal O_X\)
denote the set of all \(\mathscr U \subseteq \mathscr T_X\) such that \(X = \bigcup \mathscr U\).
We will also consider other cover types prevalent in the selection principles literature:
\begin{itemize}[leftmargin=2em]
    \item 
    An \emph{\(\omega\)-cover} of \(X\) is a cover \(\mathscr U \in \mathcal O_X\) such that, for each
    finite \(F \subseteq X\), there exists \(U \in \mathscr U\) such that \(F \subseteq U\).
    We let \(\Omega_X\) denote the set of all \(\omega\)-covers of \(X\).
    \item 
    A \emph{\(k\)-cover} of \(X\) is a cover \(\mathscr U \in \mathcal O_X\) such that,
    for each compact \(K \subseteq X\), there exists \(U \in \mathscr U\) such that
    \(K \subseteq U\).
    We let \(\mathcal K_X\) denote the set of all \(k\)-covers of \(X\).
    \item
    A \emph{large cover} of \(X\) is a cover \(\mathscr U \in \mathcal O_X\) such that,
    for each \(x \in X\), \(\{ U \in \mathscr U : x \in U \}\) is infinite.
    We let \(\Lambda_X\) denote the set of all large covers of \(X\).
\end{itemize}

We remark that, for any space \(X\),
\[\mathcal K_X \subseteq \Omega_X \subseteq \Lambda_X \subseteq \mathcal O_X,\]
which we will use throughout without additional mention.
We refer the reader to \cite[Proposition 2.2]{Chiozini} and \cite[Lemma 4]{CHContinuousFunctions}
for a proof of the inclusion \(\Omega_X \subseteq \Lambda_X\),
though the latter reference is phrased in terms of covers of ideal bases.

We recall the usual selection principles.
For more details on selection principles and relevant references, see
\cite{COOC1,KocinacSelectedResults,ScheepersSelectionPrinciples,ScheepersNoteMat}.
\begin{definition}
    Let \(\mathcal A\) and \(\mathcal B\) be sets.
    Then the single- and finite-selection principles are defined, respectively, to be the properties
    \[\mathsf S_1(\mathcal A, \mathcal B) \equiv 
    \left(\forall A \in \mathcal A^\omega\right)\left(\exists B \in \prod_{n \in \omega} A_n\right)\ \{B_n : n \in \omega\} \in \mathcal B\]
    and
    \[\mathsf S_{\mathrm{fin}}(\mathcal A, \mathcal B) \equiv 
    \left(\forall A \in \mathcal A^\omega\right)\left(\exists B \in \prod_{n \in \omega} [A_n]_+^{<\omega}\right)\ \bigcup\{B_n : n \in \omega\} \in \mathcal B.\]
    Following \cite{ScheepersNoteMat}, for a space \(X\) and topological operators \(\mathcal A\) and \(\mathcal B\),
    we write \(X \models \mathsf S_\ast(\mathcal A, \mathcal B)\), where \(\ast \in \{ 1 , \mathrm{fin} \}\),
    to mean that \(X\) satisfies the selection principle \(\mathsf S_\ast(\mathcal A_X, \mathcal B_X)\).
\end{definition}
Using this notation, recall that a space \(X\) is \emph{Menger} (resp. \emph{Rothberger})
if \(X \models \mathsf S_{\mathrm{fin}}(\mathcal O, \mathcal O)\)
(resp. \(X \models \mathsf S_1(\mathcal O, \mathcal O)\)).

\begin{definition}
    We say that a space \(X\) is
    \begin{itemize}[leftmargin=2em]
        \item 
        \emph{\(\lambda\)-Lindel\"{o}f} if every large cover of \(X\) has a countable subset
        which is also a large cover of \(X\).
        \item 
        \emph{\(\lambda\)-Rothberger} if \(X \models \mathsf S_1(\Lambda,\Lambda)\);
        that is, for every sequence \(\langle \mathscr U_n : n \in \omega \rangle\)
        of large covers of \(X\), there exists \(\langle U_n : n \in \omega \rangle \in \prod_{n\in\omega} \mathscr U_n\)
        such that \(\{ U_n : n\in \omega\}\) is a large cover of \(X\).
        \item 
        \emph{\(\lambda\)-Menger} if \(X \models \mathsf S_{\mathrm{fin}}(\Lambda, \Lambda)\);
        that is, for every sequence \(\langle \mathscr U_n : n \in \omega \rangle\)
        of large covers of \(X\), there exists \(\langle \mathscr F_n : n \in \omega \rangle \in \prod_{n\in\omega} \left[\mathscr U_n\right]_+^{<\aleph_0}\)
        such that \(\bigcup\{ \mathscr F_n : n\in \omega\}\) is a large cover of \(X\).
    \end{itemize}
\end{definition}

For a space \(X\) and \(x \in X\), we let \(\mathcal N_x = \{ U \in \mathscr T_X : x \in U \}\)
and \(\mathcal N_x^+ = \mathcal N_x \cup \{ X \}\).
For \(A \subseteq X\), \(\mathcal N_A = \{ U \in \mathscr T_X : A \subseteq U \}\)
and \(\mathcal N_A^+ = \mathcal N_A \cup \{ X \}\).
For a collection \(\mathscr A\) of subsets of \(X\), we will let
\[\mathcal N[\mathscr A] = \{ \mathcal N_A : A \in \mathscr A \}\]
where we understand that, when \(x \in X\), \(\mathcal N_{\{x\}} = \mathcal N_x\).
\begin{definition}
    We say that a space \(X\) is
    \begin{itemize}[leftmargin=2em]
        \item 
        \emph{topologically finite} if there exists a finite
        set \(F \subseteq X\) such that
        \[X = \bigcup \left\{ \bigcap \mathcal N_x^+ : x \in F \right\}.\]
        \item 
        \emph{topologically countable} if there exists a countable
        set \(A \subseteq X\) such that
        \[X = \bigcup \left\{ \bigcap \mathcal N_x^+ : x \in A \right\}.\]
    \end{itemize}
\end{definition}
\begin{lemma} \label{lem:SillyNhoodThing}
    For \(A \subseteq X\),
    \[\bigcap \mathcal N_A^+ = \bigcup \left\{ \bigcap \mathcal N_x^+ : x \in A \right\}.\]
\end{lemma}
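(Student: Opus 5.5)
The plan is to prove the two inclusions separately, working directly from the definitions of \(\mathcal N_x^+\) and \(\mathcal N_A^+\). The only real subtlety is the bookkeeping forced by the convention that \(\mathscr T_X\) consists of \emph{proper, nonempty} open sets. That convention matters in the case \(A = \varnothing\): the right-hand side is then empty, whereas the left-hand side is the intersection of all proper nonempty open sets together with \(X\), which can be nonempty (for instance, it is all of \(X\) in an indiscrete space). So I will assume \(A \neq \varnothing\), which is clearly the intended use, since the lemma is applied to the witnessing sets \(F\) and \(A\) in the definitions of topological finiteness and countability.

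For the inclusion \(\supseteq\), fix \(x \in A\). Any \(U \in \mathcal N_A\) contains \(A\), hence contains \(x\), so \(U \in \mathcal N_x\); together with \(X \in \mathcal N_x^+\) this gives \(\mathcal N_A^+ \subseteq \mathcal N_x^+\). Intersecting a larger family yields a smaller set, so \(\bigcap \mathcal N_x^+ \subseteq \bigcap \mathcal N_A^+\). Taking the union over \(x \in A\) gives the inclusion.

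For the inclusion \(\subseteq\), I argue by contrapositive. Suppose \(y \notin \bigcap \mathcal N_x^+\) for every \(x \in A\). Then for each \(x \in A\) there is some \(U_x \in \mathcal N_x^+\) with \(y \notin U_x\). Since \(y \in X\), we have \(U_x \neq X\), so \(U_x \in \mathcal N_x\) is a proper open set containing \(x\). Set \(U = \bigcup_{x \in A} U_x\). This \(U\) is open and contains \(A\). It is nonempty because \(A \neq \varnothing\), and it is proper because \(y \notin U\). Hence \(U \in \mathcal N_A\), and since \(y \notin U\), we get \(y \notin \bigcap \mathcal N_A^+\).

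The main obstacle, mild as it is, lies in this last step: checking that the union \(U\) genuinely lands in \(\mathscr T_X\). Nonemptiness is exactly where \(A \neq \varnothing\) is used, and properness is witnessed by the point \(y\) itself.
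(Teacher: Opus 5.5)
Your proof is correct and matches the paper's argument: \(\supseteq\) via \(\mathcal N_A^+ \subseteq \mathcal N_x^+\), and \(\subseteq\) by picking \(U_x \in \mathcal N_x^+\) with \(y \notin U_x\) and observing \(y \notin \bigcup\{U_x : x \in A\} \in \mathcal N_A^+\). Your restriction to \(A \neq \varnothing\) is a genuine refinement, and your reason for it is correct: for \(A = \varnothing\) that union is empty and so does not lie in \(\mathcal N_A^+\), a case the paper's proof passes over silently, though the paper only ever applies the lemma to nonempty finite sets.
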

\begin{proof}
    If \(y \in \bigcup \left\{ \bigcap \mathcal N_x^+ : x \in A \right\}\), let \(x \in A\) be such that
    \(y \in \bigcap \mathcal N_x^+\).
    Then, for any \(U \in \mathcal N_A^+\), note that \(x \in A \subseteq U\) implies that \(U \in \mathcal N_x^+\).
    Hence, \(y \in U\) which, since \(U \in \mathcal N_A^+\) was arbitrary, shows that
    \(y \in \bigcap \mathcal N_A^+\).

    On the other hand, if \(y \not\in \bigcup \left\{ \bigcap \mathcal N_x^+ : x \in A \right\}\),
    then, for each \(x \in A\), we can choose \(U_x \in \mathcal N_x^+\) such that
    \(y \not\in U_x\).
    Note that \(y \not\in \bigcup \{ U_x : x \in A \} \in \mathcal N_A^+\).
    It follows that \(y \not\in \bigcap \mathcal N_A^+\).
\end{proof}
\begin{proposition}
    For a space \(X\), \(\Lambda_X = \varnothing\) if and only if there is some \(x \in X\)
    such that \(\mathcal N_x\) is finite.
\end{proposition}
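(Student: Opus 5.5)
We prove the two directions separately, working straight from the definitions: \(\mathscr T_X\) is the set of proper nonempty open sets, \(\mathcal N_x = \{U \in \mathscr T_X : x \in U\}\), and a large cover is some \(\mathscr U \subseteq \mathscr T_X\) with \(\{U \in \mathscr U : x \in U\}\) infinite for every \(x\).

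\emph{Backward direction.} Suppose \(x \in X\) has \(\mathcal N_x\) finite, and let \(\mathscr U \in \Lambda_X\) be arbitrary. Then
\[\{U \in \mathscr U : x \in U\} \subseteq \mathcal N_x,\]
because \(\mathscr U \subseteq \mathscr T_X\). The left side is therefore finite, contradicting largeness at \(x\). Hence \(\Lambda_X = \varnothing\).

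\emph{Forward direction.} We prove the contrapositive: assuming \(\mathcal N_x\) is infinite for every \(x \in X\), we show that \(\mathscr U = \mathscr T_X\) is a large cover.
\begin{itemize}[leftmargin=2em]
    \item For each \(x\), the set \(\{U \in \mathscr T_X : x \in U\}\) is exactly \(\mathcal N_x\), which is infinite. This gives largeness.
    \item In particular every \(\mathcal N_x\) is nonempty, so every point lies in some member of \(\mathscr T_X\). Thus \(X = \bigcup \mathscr T_X\) and \(\mathscr T_X \in \mathcal O_X\).
\end{itemize}
Since \(\mathscr T_X \subseteq \mathscr T_X\) trivially, \(\mathscr T_X \in \Lambda_X\), so \(\Lambda_X \neq \varnothing\).

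No step is a real obstacle. The only thing to track is the convention that covers consist of proper nonempty open sets, so that \(\{U \in \mathscr U : x \in U\}\) is always a subset of \(\mathcal N_x\). With that in hand, both directions are immediate, and neither Lemma \ref{lem:SillyNhoodThing} nor any separation axiom is needed.
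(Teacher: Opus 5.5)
Your proof is correct and follows the paper's argument. The paper also uses \(\{U \in \mathscr U : x \in U\} \subseteq \mathcal N_x\) for one direction. For the other it takes \(\bigcup\{\mathcal N_x : x \in X\}\) as the witnessing large cover, which is exactly your \(\mathscr T_X\), and you additionally make explicit the check that it covers \(X\).
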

\begin{proof}
    If \(\mathscr U \in \Lambda_X\), then, for \(x \in X\),
    \(\{ U \in \mathscr U : x \in U \} \subseteq \mathcal N_x\), establishing that
    \(\mathcal N_x\) is infinite.
    On the other hand, if \(\mathcal N_x\) is infinite for every \(x\in X\),
    then \(\mathscr U := \bigcup \{ \mathcal N_x : x \in X \} \in \Lambda_X\).
\end{proof}

Clearly, any space with a finite topology admits no large covers.
On the other hand, any infinite \(T_1\) space has large covers since,
for every \(x \in X\), \(\left\{ X \setminus \{ y \} : y \in X \setminus \{x\} \right\}\)
is an infinite family of open neighborhoods of \(x\).

We note that simply having an infinite topology is not enough to guarantee
the existence of large covers.
\begin{example}
    Note that, \(X\), the excluded point topology on a countably infinite set,
    \cite[\hyperlink{https://topology.pi-base.org/spaces/S000012}{S12}]{PiBase},
    has the property that \(\Lambda_X = \varnothing\) but \(\mathscr T_X\) is
    infinite.
    Indeed, the excluded point has only one neighborhood, but every other point has
    an infinite open neighborhood filter.
\end{example}
\begin{proposition} \label{prop:SillyNoOmegaThing}
    For any space \(X\), \(\Omega_X = \varnothing\)
    if and only if \(X\) is topologically finite.
\end{proposition}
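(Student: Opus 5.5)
Both directions go through the set \(\bigcap \mathcal N_F^+\) for a finite \(F \subseteq X\), which Lemma \ref{lem:SillyNhoodThing} identifies with \(\bigcup\{\bigcap \mathcal N_x^+ : x \in F\}\). Note that \(\mathcal N_F\) consists only of \emph{proper} open sets, while \(\mathcal N_F^+\) adds \(X\) itself. I will show that \(\Omega_X = \varnothing\) exactly when some finite \(F\) has the property that every proper open set containing \(F\) misses a point of \(X\) in a way that cannot be repaired. Concretely, the target equivalence is: \(\Omega_X \neq \varnothing\) iff for every finite \(F\), \(\bigcap \mathcal N_F^+ \neq X\).

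For the direction ``topologically finite implies \(\Omega_X = \varnothing\)'', fix a finite \(F\) with \(X = \bigcup\{\bigcap\mathcal N_x^+ : x\in F\}\). By the lemma, \(X = \bigcap \mathcal N_F^+\), so every open set containing \(F\) (proper or not) contains all of \(X\). Suppose \(\mathscr U \in \Omega_X\). Then some \(U \in \mathscr U\) satisfies \(F \subseteq U\), so \(U \in \mathcal N_F\) and hence \(U \supseteq \bigcap \mathcal N_F^+ = X\). This makes \(U = X\), contradicting \(\mathscr U \subseteq \mathscr T_X\). One edge case needs care: the definition in the paper allows \(F\) to be empty. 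If \(F = \varnothing\), the union is empty, which is impossible because spaces are nonempty, so \(F\) is nonempty and the argument is fine.

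For the converse, assume \(X\) is not topologically finite. Then for every finite \(F \subseteq X\), \(\bigcap\mathcal N_F^+ \neq X\) by the lemma. For \(F = \varnothing\) this intersection contains \(\mathcal N_\varnothing\), which is all proper nonempty open sets, together with \(X\), so the conclusion still holds. In particular, for each nonempty finite \(F\) there is \(U \in \mathcal N_F^+\) with \(U \neq X\), that is, \(U \in \mathcal N_F\). Set
\[\mathscr U = \bigcup\{\mathcal N_F : F \in [X]_+^{<\aleph_0}\}.\]
Then \(\mathscr U \subseteq \mathscr T_X\), and every finite set, including the empty set and every singleton, lies in some member of \(\mathscr U\). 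Hence \(\mathscr U\) covers \(X\) and is an \(\omega\)-cover, so \(\Omega_X \neq \varnothing\).

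The main obstacle is purely bookkeeping: the nontriviality conventions (nonempty proper open sets, nonempty spaces) and the possibility \(F = \varnothing\). Once the reformulation \(X = \bigcap\mathcal N_F^+\) supplied by Lemma \ref{lem:SillyNhoodThing} is in hand, both directions are immediate.
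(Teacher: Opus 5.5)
Your proof is correct and essentially matches the paper's. The paper's first direction is the contrapositive of your construction: if no nonempty finite \(F\) has \(\mathcal N_F^+ = \{X\}\), then the proper open sets form an \(\omega\)-cover. For the other direction the paper argues directly that \(\bigcap \mathcal N_x^+ \subseteq U\) for each \(x \in F\), rather than rewriting via Lemma \ref{lem:SillyNhoodThing}. Your aside on \(F = \varnothing\) in the converse is muddled: \(\bigcap \mathcal N_\varnothing^+\) is the intersection of all nonempty open sets, and the lemma's identity does not hold there. It is harmless, though, since only nonempty \(F\) are used in building the \(\omega\)-cover.
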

\begin{proof}
    If \(\Omega_X = \varnothing\), there must be some \(F \in [X]_+^{<\aleph_0}\)
    such that \(\mathcal N_F^+ = \{X\}\).
    Hence, by Lemma \ref{lem:SillyNhoodThing},
    \[X = \bigcap \mathcal N_F^+ = \bigcup \left \{ \bigcap \mathcal N_x^+ : x \in F \right\}.\]

    On the other hand, suppose \(F \subseteq X\) is finite with the property that
    \[X = \bigcup \left \{ \bigcap \mathcal N^+_x : x \in F \right \}.\]
    Now, suppose \(U \subseteq X\) is open with \(F \subseteq U\).
    Note that, for each \(x \in F\), \(x \in \bigcap \mathcal N_x^+ \subseteq U\).
    It follows that
    \[X = \bigcup \left \{ \bigcap \mathcal N^+_x : x \in F \right \} \subseteq U.\]
    Hence, \(\Omega_X = \varnothing\).
\end{proof}
\begin{proposition}
    For any space \(X\), \(\mathcal K_X = \varnothing\)
    if and only if \(X\) is compact.
\end{proposition}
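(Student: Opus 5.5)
The plan is to argue both directions directly from the definitions. The key observation is that \(\mathcal K_X\) consists of covers drawn from \(\mathscr T_X\), which contains only \emph{proper} nonempty open sets. So \(\mathcal K_X = \varnothing\) should be equivalent to the existence of a compact set that no proper open set contains. We then need to show that this happens exactly when \(X\) itself is compact.

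For the direction assuming \(X\) is compact, suppose toward a contradiction that \(\mathscr U \in \mathcal K_X\). Since \(X\) is a compact subset of itself, the defining property of a \(k\)-cover produces some \(U \in \mathscr U\) with \(X \subseteq U\). Then \(U = X\). But \(\mathscr U \subseteq \mathscr T_X\) and every member of \(\mathscr T_X\) is proper, so this is impossible. Hence \(\mathcal K_X = \varnothing\).

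For the converse, suppose \(X\) is not compact. I will exhibit a \(k\)-cover, namely
\[\mathscr U = \{ U \subseteq X : U \text{ open and } K \subseteq U \text{ for some compact } K \subseteq X \} \setminus \{X\}.\]
It suffices to show that every compact \(K \subseteq X\) lies in some proper open set. Given a compact \(K\), the set \(X\) is an open set containing \(K\), but it is not proper. Instead, note that \(K \neq X\), since \(X\) is not compact. Pick \(y \in X \setminus K\).

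The one real obstacle is that without separation axioms, \(X \setminus \{y\}\) need not be open. So we must find a proper open superset of \(K\) another way. We use non-compactness: take an open cover \(\mathscr V\) of \(X\) with no finite subcover. By compactness of \(K\), finitely many members \(V_1, \dots, V_n \in \mathscr V\) cover \(K\) (if \(K = \varnothing\), take any single member of \(\mathscr V\)). Their union \(V = V_1 \cup \cdots \cup V_n\) is open and contains \(K\). It is not all of \(X\), since otherwise \(\mathscr V\) would have a finite subcover. Also \(V\) is nonempty: for empty \(K\), choose a nonempty member of \(\mathscr V\), which exists because \(X\) is nonempty. Thus \(V \in \mathscr T_X\) and \(K \subseteq V\).

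Applying this to singletons \(\{x\}\), which are compact, shows that the collection of all such sets \(V\) covers \(X\). It therefore lies in \(\mathcal O_X\) and is a \(k\)-cover, so \(\mathcal K_X \neq \varnothing\). This mirrors the structure of Proposition \ref{prop:SillyNoOmegaThing}, with the witnessing finite set replaced by the compact set \(X\).
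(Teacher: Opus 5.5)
Your proof is correct and follows essentially the same route as the paper. For compact \(X\), a \(k\)-cover would have to contain \(X\) itself; for non-compact \(X\), finite unions from an open cover with no finite subcover give proper nonempty open supersets of each compact set. One cosmetic point: the displayed \(\mathscr U\) actually contains \(\varnothing\), since every open set contains the compact set \(\varnothing\). So the witnessing \(k\)-cover should be the family of sets \(V\) you construct (or simply \(\mathscr T_X\) itself), which is what your final paragraph in fact uses.
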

\begin{proof}
    If \(X\) is compact, there are evidently no \(k\)-covers since any such cover
    would have to be trivial.
    On the other hand, suppose \(X\) is not compact and let \(\mathscr U\) be an open cover
    of \(X\) that admits no finite subcover.
    Then, for every nonempty compact \(K \subseteq X\), let \(\mathscr V_K\) be a finite subset of
    \(\mathscr U\) such that \(K \subseteq \bigcup \mathscr V_K\).
    Note that \(\bigcup \mathscr V_K \in \mathscr T_X\).
    Then \(\mathscr W = \left\{ \bigcup \mathscr V_K : K \in \mathsf K(X) \right\}\),
    where \(\mathsf K(X)\) denotes the set of all nonempty compact subsets of \(X\),
    is a \(k\)-cover of \(X\).
\end{proof}
\begin{example}
    Note that \(X = \omega+1\) with its usual order topology is an infinite compact Hausdorff space.
    So \(\mathcal K_X = \varnothing\) but \(\Omega_X \neq \varnothing\).
\end{example}
\begin{example} \label{example:SillyExample}
    There is a space \(X\) such that \(\Omega_X = \varnothing\) and \(\Lambda_X \neq \varnothing\).
    Consider the disjoint union of two copies of the excluded point topology on a countably
    infinite set.
    In particular, consider the set \(\omega \times 2\) and let \(A_0 = \omega \times \{0\}\),
    \(A_1 = \omega \times \{1\}\), \(p_0 = ( 0,0 )\), and \(p_1 = ( 0 , 1 )\).
    Then let \(X\) be \(\omega \times 2\) with the topology
    \[\{ E \subseteq X : (p_0 \in E \to A_0 \subseteq E)
    \wedge (p_1 \in E \to A_1 \subseteq E) \}.\]
    Observe that \(X\) is topologically finite since
    \[X = A_0 \cup A_1 = \left(\bigcap \mathcal N_{p_0} \right) \cup \left( \bigcap \mathcal N_{p_1} \right).\]
    So \(\Omega_X = \varnothing\).
    However, note that
    \[
    \{ A_0 \cup \{ ( j , 1 ) \} : j \geq 1 \} \} \cup \{ A_1 \cup \{ ( j , 0 ) \} : j \geq 1 \} \} \in \Lambda_X,
    \]
    so \(\Lambda_X \neq \varnothing\).
\end{example}

\subsection{Other Topological Objects and Selection Games}

As usual, we use \(C_p(X)\) to denote the space of all continuous real-valued functions defined
on \(X\) endowed with the topology of pointwise convergence.
For a finite subset \(F\) of \(X\), \(f \in C_p(X)\), and \(\varepsilon > 0\),
we let
\[[f ; F, \varepsilon] = \{ g \in C_p(X) : (\forall x \in F)\ |f(x) - g(x)| < \varepsilon \}.\]
We note that these sets constitute the standard basis for \(C_p(X)\) and that the sets
of the form \([f; \{x\}, \varepsilon]\) constitute a subbasis for \(C_p(X)\).

A space \(X\) is said to be \emph{discretely selective} if, given any sequence \(\langle U_n : n \in \mathbb N \rangle\)
of nonempty open subsets of \(X\), there exists a selection \(x_n \in U_n\) for each \(n \in \mathbb N\) such that
\(\{ x_n : n \in \mathbb N \}\) is a closed and relatively discrete subset of \(X\).
The property of being discretely selective first appeared in
\cite[Lemma 3.8.1]{GuerreroSanchezTkachukCp} and later gained its name in \cite{TkachukClosedDiscrete}.
There is a naturally associated game of length \(\omega\) introduced in \cite{TkachukTwoPointPickingGames}
where, at round \(n \in \omega\),
the first player chooses some \(U_n \in \mathscr T_X\) and the second player responds with
\(x_n \in U_n\).
The second player is declared the winner if \(\{ x_n : n \in \omega \}\) is closed and relatively
discrete in \(X\); otherwise, the first player wins.
This game is known as the \emph{closed discrete game} and, aside from its introduction in \cite{TkachukTwoPointPickingGames},
has been expanded upon in, for example, \cite{ClontzHolshouser,CHCompactOpen,CHContinuousFunctions,CExcursions}.
The connections established in \cite{TkachukTwoPointPickingGames} and \cite{ClontzHolshouser}
between certain topological games on a space \(X\) and other topological games on \(C_p(X)\)
inspired the following notion, due to Chiozini.

\begin{definition}[\cite{Chiozini}]
    A set \(A \subseteq C_p(X)\) is said to be \emph{strongly closed discrete} if,
    for every \(g \in C_p(X)\), there exist \(x \in X\) and \(\varepsilon > 0\)
    such that \(A \cap [g; \{x\}, \varepsilon]\) is finite.
    Then we let \(s\mathrm{CD}_{C_p(X)}\) denote the set of all
    strongly closed discrete subsets of \(C_p(X)\).
\end{definition}
We note here that the property of being strongly closed discrete could be extended
to the class of all topological spaces by phrasing it as the property
of being closed discrete with respect to a given subbase compatible with the topology.
As such, the strongly closed discrete game can be related to subbasic selection games
as discussed in \cite{GuerreroSanchezTkachuk2017},
which we'll elaborate on in Section \ref{section:CoveringPrinciples}.

For the reader's convenience, we now define traditional selection games for two players, P1 and P2, of length \(\omega\).
\begin{definition}
    Given sets \(\mathcal A\) and \(\mathcal B\), we define the \emph{finite-selection game}
    \(\mathsf{G}_{\mathrm{fin}}(\mathcal A, \mathcal B)\) for \(\mathcal A\) and \(\mathcal B\) as follows.
    In round \(n \in \omega\), P1 plays \(A_n \in \mathcal A\) and P2 responds with \(\mathscr F_n \in [A_n]_+^{<\omega}\).
    We declare P2 the winner if \(\bigcup\{ \mathscr F_n : n \in \omega \} \in \mathcal B\).
    Otherwise, P1 wins.

    We analogously define the \emph{single-selection game}
    \(\mathsf{G}_{1}(\mathcal A, \mathcal B)\) for \(\mathcal A\) and \(\mathcal B\) as follows.
    In round \(n \in \omega\), P1 plays \(A_n \in \mathcal A\) and P2 responds with \(x_n \in A_n\).
    We declare P2 the winner if \(\{x_n : n \in \omega \} \in \mathcal B\).
    Otherwise, P1 wins.
\end{definition}
For a class \(\mathcal B\), we will use \(\neg \mathcal B\) to denote the complementary class
of \(\mathcal B\).
Hence, in our notation, the games \(\Lambda FO(X)\) and \(sCD(C_p(X))\) of \cite{Chiozini}
correspond to the games \(\mathsf G_1(\mathcal N[X_{\mathrm{fin}}], \neg \Lambda_X)\)
and
\(\mathsf G_1(\mathscr T_{C_p(X)} , s\mathrm{CD}_{C_p(X)})\), respectively.
As such, we will refer to the game \(\mathsf G_1(\mathscr T_{C_p(X)} , s\mathrm{CD}_{C_p(X)})\)
as the \emph{strong closed discrete game} on \(C_p(X)\).

We remark now one reason we take \(\mathscr T_X\) to be as defined, which disallows the second player
from playing the entire space \(X\) in variations of the point-open game, like
\(\mathsf G_1(\mathcal N[X_{\mathrm{fin}}], \neg \Lambda_X)\).
In the usual point-open game of Galvin and Telg\'{a}rsky \cite{Galvin1978,Telgarsky1975},
the second player has to necessarily avoid playing
\(X\) since they wish to avoid covering the space.
In the game \(\mathsf G_1(\mathcal N[X_{\mathrm{fin}}], \neg \Lambda_X)\), however,
if the second player were allowed to play \(X\) in every round, then they would successfully avoid
producing a large cover of \(X\).
So, to guarantee that the game \(\mathsf G_1(\mathcal N[X_{\mathrm{fin}}], \neg \Lambda_X)\)
is nontrivial, we must restrict the second player's available moves.

Games have naturally associated notions of strategies.
For the types of strategies appearing in this paper, see \cite[Definition 3.7]{CCHMengerRothbergerSurvey};
additionally, we will briefly mention \emph{tactical strategies}
(\cite[Definition 11]{ClontzDualSelection})
for P2 in Section \ref{section:CoveringPrinciples}.
In the present notation, a tactical strategy for P2 in the game
\(\mathsf G_1(\mathcal A, \mathcal B)\) is a choice function
\(\tau : \mathcal A \to \bigcup \mathcal A\);
that is, \(\tau(\mathscr U) \in \mathscr U\) for each \(\mathscr U \in \mathcal A\).
A tactical strategy is said to be \emph{winning} if, given any sequence
\(\langle \mathscr U_n : n \in \omega \rangle \in \mathcal A^\omega\),
\(\{ \tau(\mathscr U_n) : n \in \omega \} \in \mathcal B\).

To assert that one of the players has a particular kind of winning strategy for
a given game, we will use the notation provided in \cite[Notation 12]{ClontzDualSelection}.

\begin{definition}
    For any topological property \(\mathcal P\) defined in terms of selection principles,
    we say that a space is \emph{Markov \(\mathcal P\)} if the second player of the
    corresponding selection game has a winning Markov strategy.
\end{definition}
So, for example, we say that a space is Markov \(\lambda\)-Menger
if P2 has a winning Markov strategy in the game \(\mathsf G_{\mathrm{fin}}(\Lambda_X, \Lambda_X)\).

We will say that two games are \emph{dual} in the sense of \cite[Definition 19]{ClontzDualSelection},
which includes three strategic strengths, and use the notation
\[\mathsf G_1(\mathcal A, \mathcal B) \dualarrows \mathsf G_1(\mathcal C, \mathcal D)\]
to assert that \(\mathsf G_1(\mathcal A, \mathcal B)\) and \(\mathsf G_1(\mathcal C, \mathcal D)\)
are dual in this sense.
Note particularly that this notion of duality is stronger than the typical use of duality,
which is usually restricted to duality only in terms of perfect-information strategies.
For the granular notions of duality, like \emph{perfect-} and \emph{Markov-information duality},
see Definitions 16, 17, and 18 of \cite{ClontzDualSelection}.
We will also use Corollaries 29 and 31 of \cite{ClontzDualSelection} without additional mention,
which, respectively, assert that, for any space \(X\) and any set \(\mathcal B\),
\[\mathsf G_1(\mathcal O_X, \mathcal B) \dualarrows \mathsf G_1(\mathcal N[X], \neg \mathcal B)
\text{ and }\mathsf G_1(\Omega_X, \mathcal B) \dualarrows \mathsf G_1(\mathcal N[X_{\mathrm{fin}}], \neg \mathcal B).\]

Finally, we recall the partial orderings \(\leq_{\mathrm{II}}\) and \(\leq_{\mathrm{II}}^+\)
between selection games and refer the reader to \cite[Definition 3.8]{CCHMengerRothbergerSurvey}
for the particular implications constituting the orderings;
\(\leq_{\mathrm{II}}\) is the conjunction of four implications across four levels of strategic type
and \(\leq_{\mathrm{II}}^+\) extends \(\leq_{\mathrm{II}}\) to include an implication for
constant strategies for the first player.
For game equivalences, just as in \cite[Definition 2.14]{CHVietoris}, we use the notation
\[\mathsf G_1(\mathcal A, \mathcal B) \equiv \mathsf G_1(\mathcal C, \mathcal D)\] if
\(\mathsf G_1(\mathcal A, \mathcal B) \leq_{\mathrm{II}} \mathsf G_1(\mathcal C, \mathcal D)\)
and \(\mathsf G_1(\mathcal C, \mathcal D) \leq_{\mathrm{II}} \mathsf G_1(\mathcal A, \mathcal B)\);
similarly, we use the notation
\[\mathsf G_1(\mathcal A, \mathcal B) \leftrightarrows \mathsf G_1(\mathcal C, \mathcal D)\] if
\(\mathsf G_1(\mathcal A, \mathcal B) \leq_{\mathrm{II}}^+ \mathsf G_1(\mathcal C, \mathcal D)\)
and \(\mathsf G_1(\mathcal C, \mathcal D) \leq_{\mathrm{II}}^+ \mathsf G_1(\mathcal A, \mathcal B)\).
Note here that both of these notions of game equivalence are stronger than the typical sense of
game equivalence, which is usually concerned only with equivalence for perfect-information strategies.
We also highlight the reason neither \(\leq_{\mathrm{II}}\) nor \(\leq_{\mathrm{II}}^+\) include
implications for the level of tactical strategies.
A straightforward argument establishes that P2 has a winning tactical strategy in the Menger game
on \(X\) if and only if \(X\) is compact, which is typically not of interest in the general theory
of Menger-type covering properties.
A similarly straightforward argument shows that
P2 has a winning tactical strategy in the Rothberger game on \(X\) if and only if
\(X\) has no nontrivial open covers!
It is immediately evident that spaces lacking any nontrivial open covers are not of much
interest in any context, though we note that the equivalences of Theorem \ref{thm:SingleLambdaOpen}
could be extended to equivalences involving the level of tactical strategies because of this
triviality.

\section{Generalized Covering Principles} \label{section:CoveringPrinciples}

For any collection \(\mathscr A \subseteq \wp(X)\), we can define a \emph{cover by elements of \(\mathscr A\)}
to be a collection \(\mathscr C \subseteq \mathscr A\) such that \(X = \bigcup \mathscr C\).
Let \(\mathscr A_{\mathrm{cov}}\) denote the collection of all such covers.
We can then define the corresponding notions of being \(\mathscr A\)-compact,
\(\mathscr A\)-Lindel\"{o}f, \(\mathscr A\)-Rothberger, etc.\footnote{We recognize that this overlaps
with common notations like, for example, \(\omega\)-Rothberger, but trust that context will dispel any
potential confusion.}

When \(\mathscr S\) is a subbase for the space \(X\), the Alexander Subbase Lemma
is the assertion that \(X\) is compact if and only if \(X\) is \(\mathscr S\)-compact.
Recall that a collection \(\mathscr S \subseteq \mathscr T_X\) is a subbase
compatible with the topology on \(X\) if
\[\left\{ \bigcap \mathscr F : \mathscr F \in \left[ \mathscr S \right]^{<\aleph_0} \right\}\]
is a basis for its topology.
Note that any basis on a topology is also a subbasis, so, any space which fails to be
Menger, Rothberger, or Lindel\"{o}f fails to be \(\mathscr B\)-Menger, \(\mathscr B\)-Rothberger,
or \(\mathscr B\)-Lindel\"{o}f, respectively, for any basis \(\mathscr B\) compatible with its
topology.

We note that, for a subbase \(\mathscr S\) of a space \(X\), the corresponding \(\mathscr S\)-Rothberger game
was studied by Guerrero S\'{a}nchez and Tkachuk \cite{GuerreroSanchezTkachuk2017}
with the notation \(CE(\mathscr S,X)\);
in our notation, the game \(CE(\mathscr S, X)\) is \(\mathsf G_1(\mathscr S_{\mathrm{cov}},\mathscr S_{\mathrm{cov}})\).
We also note that, in that same paper, the authors demonstrated
that the \(\mathscr S\)-Rothberger game is perfect-information dual to the primary game of interest in the paper,
the point-open subbase game, \(PO(\mathscr S,X)\); in our notation, \(PO(\mathscr S,X)\) is equivalent to
\(\mathsf G_1(\mathscr S_{\mathrm{nh}}[X], \neg \mathscr S_{\mathrm{cov}})\), where\footnote{We use the string \emph{nh}
to evoke the phrase \emph{neighborhoods}.}
\[\mathscr S_{\mathrm{nh}}[X] = \{ \mathscr S_{\mathrm{nh}}(x) : x \in X \}
= \{ \{ A \in \mathscr S : x \in A \} : x \in X \}.\]
By using standard duality techniques recorded by Clontz \cite{ClontzDualSelection},
we presently extend the perfect-information duality of \cite[Theorem 3.12]{GuerreroSanchezTkachuk2017}
to a more general one which includes, in addition, limited-information strategies.
Before we state the generalization, extend the notation above as follows:
for a collection \(\mathscr A\) of subsets of \(X\), let\footnote{Since the collection \(\mathscr A\)
may have nothing to do with the underlying topology, we use the string \emph{pf} here
to simply evoke the phrase \emph{point filter}.}
\[\mathscr A_{\mathrm{pf}}(x) = \{ A \in \mathscr A : x \in A \}\]
for each \(x \in X\) and
\[\mathscr A_{\mathrm{pf}}[X] = \{ \mathscr A_{pf}(x) : x \in X \}.\]
\begin{theorem} \label{thm:SubbasicDuality}
    For any space \(X\), any collection \(\mathscr A\) of subsets of \(X\), and any set \(\mathcal B\),
    the two games
    \(\mathsf G_1(\mathscr A_{\mathrm{cov}},\mathcal B)\) and
    \(\mathsf G_1(\mathscr A_{\mathrm{pf}}[X], \neg \mathcal B)\)
    are perfect-, Markov-, and tactical-information dual.
\end{theorem}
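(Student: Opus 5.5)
The plan is to apply Clontz's general duality criterion from \cite{ClontzDualSelection}. As I recall it, \(\mathsf G_1(\mathcal A, \mathcal B)\) and \(\mathsf G_1(\mathcal C, \neg\mathcal B)\) are perfect-, Markov-, and tactical-information dual as soon as \(\mathcal C\) is a \emph{reflection} of \(\mathcal A\). This means two things. First, for each \(A \in \mathcal A\) there is \(C \in \mathcal C\) with \(C \subseteq A\). Second, every choice function \(\gamma\) on \(\mathcal C\) (that is, \(\gamma(C) \in C\) for each \(C \in \mathcal C\)) has range \(\{\gamma(C) : C \in \mathcal C\}\) containing some member of \(\mathcal A\). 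This is exactly the mechanism behind Corollaries 29 and 31 of \cite{ClontzDualSelection} quoted above. So the proof reduces to verifying that \(\mathscr A_{\mathrm{pf}}[X]\) is a reflection of \(\mathscr A_{\mathrm{cov}}\) and then invoking Clontz's theorem with \(\mathcal A = \mathscr A_{\mathrm{cov}}\) and \(\mathcal C = \mathscr A_{\mathrm{pf}}[X]\).

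The first key step is the choice-function condition. Let \(\gamma\) be a choice function on \(\mathscr A_{\mathrm{pf}}[X]\), so \(\gamma(\mathscr A_{\mathrm{pf}}(x)) \in \mathscr A\) and \(x \in \gamma(\mathscr A_{\mathrm{pf}}(x))\) for every \(x \in X\). Then
\[X = \bigcup \{ \gamma(\mathscr A_{\mathrm{pf}}(x)) : x \in X \},\]
so this range lies in \(\mathscr A_{\mathrm{cov}}\). One technicality: distinct points may have equal \(\mathscr A_{\mathrm{pf}}(x)\). Since \(\gamma\) is defined on the collection of sets rather than on points, this causes no problem; the range still covers \(X\), because each \(x\) lies in the value \(\gamma\) assigns to its own point filter.

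The second key step is the other half of the reflection condition. Given \(\mathscr C \in \mathscr A_{\mathrm{cov}}\), I need some \(x\) with \(\mathscr A_{\mathrm{pf}}(x) \subseteq \mathscr C\). This fails in general: the point filter contains every member of \(\mathscr A\) through \(x\), not just those in \(\mathscr C\). So Clontz's reflection notion has to be used in the correct direction. In \cite{ClontzDualSelection} the condition is phrased as: for every \(C \in \mathcal C\), the set \(\{\gamma(C)\}\)-type ranges of choice functions are \emph{exactly} (up to containment) the members of \(\mathcal A\). The needed second condition is then that every \(\mathscr C \in \mathscr A_{\mathrm{cov}}\) contains the range of some choice function on \(\mathscr A_{\mathrm{pf}}[X]\). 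This holds: for each point filter \(\mathscr A_{\mathrm{pf}}(x)\), pick any \(C \in \mathscr C\) with \(x \in C\), which is possible since \(\mathscr C\) covers \(X\). Then \(C \in \mathscr A_{\mathrm{pf}}(x)\), and this defines a choice function whose range is contained in \(\mathscr C\).

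I expect the main obstacle to be matching these two conditions precisely to the hypotheses of Clontz's duality theorem, and checking that his proof never uses topological structure or nonemptiness assumptions beyond these. In particular, a point filter \(\mathscr A_{\mathrm{pf}}(x)\) may be empty. In that case \(\mathscr A_{\mathrm{cov}} = \varnothing\) and \(\mathscr A_{\mathrm{pf}}[X]\) has an empty member, so both games are degenerate and must be handled separately. If that verification goes through, the three levels of duality follow directly from Clontz's result.
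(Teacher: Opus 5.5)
Your proposal is correct and is essentially the paper's proof: invoke Clontz's Corollary 26 after checking that \(\mathscr A_{\mathrm{pf}}[X]\) is a reflection of \(\mathscr A_{\mathrm{cov}}\). That means every choice function on \(\mathscr A_{\mathrm{pf}}[X]\) has range in \(\mathscr A_{\mathrm{cov}}\), and every \(\mathscr C \in \mathscr A_{\mathrm{cov}}\) contains such a range, which are exactly the two facts you verify (your first-paragraph statement of the definition is off, but the corrected version you end up using is the right one). The empty-point-filter case you flag needs no separate treatment: then \(\mathscr A_{\mathrm{cov}} = \varnothing\) and no choice functions exist, so the reflection condition holds vacuously.
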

\begin{proof}
    The statement of the theorem will follow from \cite[Corollary 26]{ClontzDualSelection}
    if we show that \(\mathscr A_{\mathrm{pf}}[X]\) is a reflection (\cite[Definition 5]{ClontzDualSelection})
    of \(\mathscr A_{\mathrm{cov}}\).
    Note that a choice \(f\) of \(\mathscr A_{\mathrm{pf}}[X]\) is a function which
    assigns, for each \(\mathscr A_{\mathrm{pf}}(x)\), some \(f(x) \in \mathscr A_{\mathrm{pf}}(x)\).
    Clearly, then, for any choice \(f\) of \(\mathscr A_{\mathrm{pf}}[X]\), \(\mathrm{range}(f) \in \mathscr A_{\mathrm{cov}}\).
    Moreover, if \(\mathscr C \in \mathscr A_{\mathrm{cov}}\), we can, for each \(x \in X\),
    choose \(f(x) \in \mathscr C\) with \(x \in f(x)\).
    Note that \(f(x) \in \mathscr A_{\mathrm{pf}}(x)\) since \(x \in f(x) \in \mathscr C \subseteq \mathscr A\).
    Hence, \(f\) is a choice on \(\mathscr A_{\mathrm{pf}}[X]\) with the property that
    \(\mathrm{range}(f) \subseteq \mathscr C\).
    Therefore, \(\mathscr A_{\mathrm{pf}}[X]\) is a reflection
    of \(\mathscr A_{\mathrm{cov}}\), and the proof is complete.
\end{proof}
\begin{corollary}
    For any space \(X\) and any subbase \(\mathscr S\) for \(X\), the two games
    \(\mathsf G_1(\mathscr S_{\mathrm{cov}},\mathscr S_{\mathrm{cov}})\) and
    \(\mathsf G_1(\mathscr S_{\mathrm{nh}}[X], \neg \mathscr S_{\mathrm{cov}})\)
    are perfect-, Markov-, and tactical-information dual.
\end{corollary}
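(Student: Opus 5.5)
This corollary is the special case of Theorem \ref{thm:SubbasicDuality} obtained by taking \(\mathscr A = \mathscr S\) and \(\mathcal B = \mathscr S_{\mathrm{cov}}\).

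With this choice, the game \(\mathsf G_1(\mathscr A_{\mathrm{cov}},\mathcal B)\) is literally \(\mathsf G_1(\mathscr S_{\mathrm{cov}},\mathscr S_{\mathrm{cov}})\). It remains only to check that the second game matches, that is, \(\mathscr A_{\mathrm{pf}}[X] = \mathscr S_{\mathrm{nh}}[X]\). For each \(x \in X\),
\[\mathscr S_{\mathrm{pf}}(x) = \{ A \in \mathscr S : x \in A \} = \mathscr S_{\mathrm{nh}}(x)\]
directly from the two definitions. Hence \(\mathscr S_{\mathrm{pf}}[X] = \{ \mathscr S_{\mathrm{nh}}(x) : x \in X \} = \mathscr S_{\mathrm{nh}}[X]\). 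Therefore \(\mathsf G_1(\mathscr A_{\mathrm{pf}}[X], \neg\mathcal B)\) is exactly \(\mathsf G_1(\mathscr S_{\mathrm{nh}}[X], \neg \mathscr S_{\mathrm{cov}})\).

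Theorem \ref{thm:SubbasicDuality} then gives perfect-, Markov-, and tactical-information duality directly.

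The only point deserving any care is that Theorem \ref{thm:SubbasicDuality} holds for an arbitrary collection \(\mathscr A\) of subsets of \(X\). So no property of \(\mathscr S\) as a subbase is needed: neither compatibility with the topology nor \(\mathscr S \subseteq \mathscr T_X\). The subbase hypothesis serves only to make the games topologically meaningful, matching \(CE(\mathscr S,X)\) and \(PO(\mathscr S,X)\) of \cite{GuerreroSanchezTkachuk2017}.

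I expect no real obstacle. The one potential subtlety is nonemptiness of the move sets, which matters if \(\mathscr S_{\mathrm{cov}}\) might be empty or some \(\mathscr S_{\mathrm{nh}}(x)\) might be empty. Since \(\mathscr S\) is a subbase, \(\bigcup \mathscr S = X\), so each \(\mathscr S_{\mathrm{nh}}(x)\) is nonempty and \(\mathscr S \in \mathscr S_{\mathrm{cov}}\). In any case, such degenerate situations are already covered by the generality of the theorem.
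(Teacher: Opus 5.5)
Your proposal is correct and is exactly how the paper obtains the corollary: it is the instance \(\mathscr A = \mathscr S\), \(\mathcal B = \mathscr S_{\mathrm{cov}}\) of Theorem \ref{thm:SubbasicDuality}, after noting \(\mathscr S_{\mathrm{pf}}[X] = \mathscr S_{\mathrm{nh}}[X]\). One small caveat concerns your aside. Under the paper's convention (\(\mathscr S \subseteq \mathscr T_X\), with the empty intersection allowed when generating the base), \(\bigcup \mathscr S = X\) can fail, for instance at a point whose only neighborhood is \(X\). As you say, though, nothing in the argument depends on it.
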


In some very important sense, when relativizing covering properties to ones involving subbasic covers,
the Alexander Subbase Lemma is optimal.
The standard proof of the Alexander Subbase Lemma employing Zorn's Lemma cannot be extended
to the context where \emph{compact} is replaced with \emph{Lindel\"{o}f}.
\begin{example}
    Let \(\mathbf\Gamma\) be the Niemytzki plane.
    Let \(H = \{ (x,y) \in \mathbf \Gamma : y > 0\}\) and, for each \(x \in \mathbb R\),
    let \[B_x = B((x,1);1) \cup \{(x,0)\}.\]
    Then \[\mathscr U_0 := \{ H \} \cup \{ B_x : x \in \mathbb R\}\]
    is an open cover of \(\mathbf \Gamma\) with no countable subcover.
    For each \(n \in \omega\), let \(S_n = \bigcup \{ B_x : x < n \}\).
    Then, for each \(n \in \omega\), recursively define
    \(\mathscr U_{n+1} = \mathscr U_n \cup \{ S_n \}\).
    Note that \(\mathscr U_n\) is an open cover of \(\mathbf \Gamma\) with
    no countable subcover.
    Then \(\{ \mathscr U_n : n \in \omega \}\) is a chain (ordered by inclusion)
    of open covers of \(\mathbf \Gamma\) with no upperbound in the poset of all
    open covers of \(\mathbf \Gamma\) admitting no countable subcover.
    Indeed, note that
    \[\{H\} \cup \{ S_n : n \in \omega \} \subseteq \bigcup_{n\in\omega} \mathscr U_n\]
    covers \(\mathbf \Gamma\).
\end{example}
Beyond the bounded chain issue mentioned above, we have an explicit example of a space
which is \(\mathscr S\)-Lindel\"{o}f but not Lindel\"{o}f.
\begin{example}
    Let \(\mathbb R_\ell\) be the Sorgenfrey line and consider the Sorgenfrey plane
    \(\mathbb R_\ell^2\).
    Consider the subbasis \(\mathscr S = \mathscr V \cup \mathscr H\) for \(\mathbb R_\ell^2\)
    where \[\mathscr V = \left\{ [a,b) \times \mathbb R_\ell : a , b \in \mathbb R, a < b \right\}\]
    and \[\mathscr H = \left\{ \mathbb R_\ell \times [a, b) : a, b \in \mathbb R, a < b \right\}.\]
    Then \(\mathbb R_\ell^2\) is \(\mathscr S\)-Lindel\"{o}f.
\end{example}
\begin{proof}
    Let \(\mathscr U \subseteq \mathscr S\) be a cover of \(\mathbb R_\ell^2\) and let
    \(\mathscr U_v = \{ U \in \mathscr U : U \in \mathscr V\}\) and
    \(\mathscr U_h = \{ U \in \mathscr U : U \in \mathscr H \}\).
    Then write
    \[\mathscr U_v = \{ [a_\lambda, b_\lambda) \times \mathbb R_\ell : \lambda \in \Lambda_v \}\]
    and
    \[\mathscr U_h = \{ \mathbb R_\ell \times [a_\lambda, b_\lambda) : \lambda \in \Lambda_h \}\]
    where \(\Lambda_v \cap \Lambda_h = \varnothing\).
    Note that
    \[W = \bigcup \left\{(a_\lambda, b_\lambda) \times \mathbb R_\ell : \lambda \in \Lambda_v \right\}
    \cup \bigcup \left\{ \mathbb R_\ell \times (a_\lambda, b_\lambda) : \lambda \in \Lambda_h \right\}\]
    is an open subspace of \(\mathbb R^2\), the standard Euclidean plane.
    Hence, there are countable sets \(\Lambda_v^\prime \subseteq \Lambda_v\) and \(\Lambda_h^\prime \subseteq \Lambda_h\)
    such that
    \[W = \bigcup \left\{ (a_\lambda, b_\lambda) \times \mathbb R_\ell : \lambda \in \Lambda_v^\prime \right\}
    \cup \bigcup \left\{ \mathbb R_\ell \times (a_\lambda, b_\lambda) : \lambda \in \Lambda_h^\prime \right\}.\]

    Now, for \(x \in \mathbb R\), let \(V_x = \{x\} \times \mathbb R_\ell\) and \(H_x = \mathbb R_\ell \times \{x\}\).
    We claim that there exist countable sets \(X_1, X_2 \subseteq \mathbb R\) such that
    \[\mathbb R_\ell^2 \setminus W \subseteq \bigcup_{x \in X_1} V_x \cup \bigcup_{y \in X_2} H_y.\]

    Note that, for each \((x,y) \in \mathbb R_\ell^2 \setminus W\),
    there is either some \(\lambda \in \Lambda_v\) such that \(x = a_\lambda\) or some
    \(\lambda \in \Lambda_h\) such that \(y = a_\lambda\).
    Let
    \[M_v = \left\{ x \in \mathbb R : (\exists y \in \mathbb R)(\exists \lambda \in \Lambda_v)\ 
    \left[ (x,y) \not\in W \wedge x = a_\lambda \right] \right\}\]
    and
    \[M_h = \left\{ y \in \mathbb R : (\exists x \in \mathbb R)(\exists \lambda \in \Lambda_h)\ 
    \left[ (x,y) \not\in W \wedge y = a_\lambda \right] \right\}.\]
    
    For each \(x \in M_v\), choose \(\lambda_x \in \Lambda_v\) such that \(x = a_{\lambda_x}\)
    and note that \[(a_{\lambda_x} , b_{\lambda_x}) \times \mathbb R_\ell \subseteq W.\]
    Choose \(q_x \in \mathbb Q\) with \(a_{\lambda_x} < q_x < b_{\lambda_x}\).
    This defines a map \(M_v \to \mathbb Q\).

    Similarly, for each \(y \in M_h\), choose \(\lambda_y \in \Lambda_h\) such that
    \(y = a_{\lambda_y}\).
    Note that \[\mathbb R_\ell \times (a_{\lambda_y}, b_{\lambda_y}) \subseteq W.\]
    Then choose \(r_y \in \mathbb Q\) with \(a_{\lambda_y} < r_y < b_{\lambda_y}\).
    This defines a map \(M_h \to \mathbb Q\).

    We show that both mappings \(M_v \to \mathbb Q\) and \(M_h \to \mathbb Q\) are injective.
    Suppose \(x_1,x_2 \in M_v\), \(x_1 < x_2\).
    Let \(y_1, y_2 \in \mathbb R\) be such that \((x_1,y_1) \not\in W\) and \((x_2,y_2) \not\in W\).
    Note that, for any \(x \in (x_1, q_{x_1}] \subseteq (a_{\lambda_{x_1}}, b_{\lambda_{x_1}})\) and any \(y \in \mathbb R\),
    \((x,y) \in W\).
    Hence, \(q_{x_1} < x_2 < q_{x_2}\).

    A similar argument establishes that the mapping \(M_h \to \mathbb Q\) is injective.

    Hence, both \(M_v\) and \(M_h\) are countable.
    It follows that
    \[\mathbb R_\ell^2 \setminus W \subseteq \bigcup_{x \in M_v} V_x \cup \bigcup_{y \in M_h} H_y,\]
    as claimed.

    Finally,
    \begin{align*}
        \left\{ [a_\lambda, b_\lambda) \times \mathbb R_\ell : \lambda \in \Lambda_v^\prime \right\}
        &\cup \left\{ \mathbb R_\ell \times [a_\lambda, b_\lambda) : \lambda \in \Lambda_h^\prime \right\}\\
        &\cup \left\{ [a_{\lambda_x}, b_{\lambda_x}) \times \mathbb R_\ell : x \in M_v \right\}\\
        &\cup \left\{ \mathbb R_\ell \times  [a_{\lambda_y}, b_{\lambda_y}) : y \in M_h \right\}
    \end{align*}
    is our desired countable subcover of \(\mathbb R_\ell^2\).
\end{proof}
We also note that a space can be \(\mathscr S\)-Rothberger without even being Menger.
\begin{example} \label{example:Irrationals}
    Consider \(\mathbb P = \mathbb R \setminus \mathbb Q\) and recall that \(\mathbb P\) is not Menger.
    Then consider the subbasis
    \(\mathscr S = \mathscr L \cup \mathscr R\) for \(\mathbb P\) where
    \[\mathscr L = \{ (\leftarrow, x) \cap \mathbb P : x \in \mathbb Q\}\]
    and \[\mathscr R = \{ (x, \rightarrow) \cap \mathbb P : x \in \mathbb Q\}.\]
    Then \(\mathbb P\) is \(\mathscr S\)-Rothberger.
\end{example}
\begin{proof}
    Let \(\mathscr U_n\) be a subbasic cover of \(\mathbb P\) for each \(n \in \omega\)
    and let \(D = \{ d_n : n \in \omega \}\) be a dense subset of \(\mathbb P\).
    For each \(n \in \omega\), choose some \(U_{n+1} \in \mathscr U_{n+1}\) such that
    \(d_n \in U_{n+1}\).
    
    If \(\mathbb P = W:= \bigcup \{ U_{n+1} : n \in \omega \}\), we can simply choose an arbitrary
    \(U_0 \in \mathscr U_0\) to complete the selection.
    Otherwise, \(\mathbb P \setminus W \neq\varnothing\).
    In such a case, we claim that the cardinality of \(\mathbb P \setminus W\) is one.
    Suppose \(x \in \mathbb P \setminus W\) and that \(y \in \mathbb P\) with \(x \neq y\).

    If \(x < y\), by density of \(D\), there is some \(n \in \omega\) such that \(x < d_n < y\).
    Then \(d_n \in U_{n+1}\) and, since \(x \not\in W\), there must be some \(b \in \mathbb Q\)
    such that \(U_{n+1} = (b, \rightarrow)\).
    Hence, \(b < d_n < y\) and we have that \(y \in U_{n+1} \subseteq W\).

    Otherwise, \(y < x\) and a similar argument asserts that \(y \in W\).

    To complete the selection, we need only choose some \(U_0\in\mathscr U_0\)
    such that \(x \in U_0\) where \(x\) is such that \(\{x\} = \mathbb P \setminus W\).
\end{proof}
By Example \ref{example:Irrationals}, we see that a space \(X\) can admit a subbase \(\mathscr S\)
for which \(X\) is \(\mathscr S\)-Rothberger even though it fails to be Rothberger.
For any such space \(X\), note that it fails to be \(\mathscr B\)-Rothberger for any basis
\(\mathscr B\) compatible with its topology.

However, in some cases, a space can fail to be \(\mathscr S\)-Rothberger with respect to any subbase \(\mathscr S\)
compatible with its topology.

The following example is a direct adaptation of \cite[Theorem 3.10]{GuerreroSanchezTkachuk2017},
with slightly modified language emphasizing the involvement of limited-information strategies.
\begin{example} \label{example:Measurable}
    Under the assumption that measurable cardinals exist, if \(\kappa\) is any cardinal
    at least as large as the least measurable cardinal,
    then \(D(\kappa)\) is not \(\mathscr S\)-Lindel\"{o}f for any subbase \(\mathscr S\) compatible
    with \(D(\kappa)\).
\end{example}
\begin{proof}
    We start by letting \(\mu\) be a \(\sigma\)-complete free ultrafilter on \(\kappa\)
    and \(\mathscr S\) be any subbase compatible with the topology of \(D(\kappa)\).
    Note that, for any \(x \in \kappa\), there is some \(\mathcal E_x \in [\mathscr S]^{<\aleph_0}\)
    such that \(\bigcap \mathcal E_x = \{x\}\).
    Since \(\{x\} \not\in \mu\), there must be some \(E_x \in \mathcal E_x\) with \(E_x \not\in \mu\).
    We claim that this is a winning tactical strategy for P2 in the game
    \(\mathsf G_1(\mathscr S_{\mathrm{nh}}[X], \neg \mathscr S_{\mathrm{cov}})\).
    Indeed, suppose \(\{ x_n : n \in \omega \} \subseteq \kappa\) and note that
    \(\{E_{x_n} : n \in \omega\} \subseteq \mathscr S\) has the property that \(x_n \in E_{x_n} \not\in \mu\)
    for each \(n \in \omega\).
    By \(\sigma\)-completeness, \(\bigcup \{ E_{x_n} : n \in \omega \} \not\in \mu\).
    Hence, \(\{ E_{x_n} : n \in \omega \}\) fails to cover \(D(\kappa)\),
    and so P2 has won.

    Finally, by Theorem \ref{thm:SubbasicDuality},
    \[\mathrm{II} \underset{\mathrm{tact}}{\uparrow} \mathsf G_1(\mathscr S_{\mathrm{nh}}[D(\kappa)], \neg \mathscr S_{\mathrm{cov}})
    \iff \mathrm{I} \underset{\mathrm{con}}{\uparrow} \mathsf G_1(\mathscr S_{\mathrm{cov}}, \mathscr S_{\mathrm{cov}}),\]
    which establishes that \(D(\kappa)\) is not \(\mathscr S\)-Lindel\"{o}f.
\end{proof}
With the limited-information strategy type clarification in Example \ref{example:Measurable},
we would like to update \cite[Question 4.1]{GuerreroSanchezTkachuk2017} to:
\begin{question}
    If \(\kappa\) is such that \(D(\kappa)\) is not \(\mathscr S\)-Lindel\"{o}f for any subbase
    \(\mathscr S\) compatible with \(D(\kappa)\), does \(\kappa\) necessarily admit a \(\sigma\)-complete
    free ultrafilter?
    Equivalently, if \(D(\kappa)\) has the property that P2 has a winning tactical strategy
    in the subbasic point-open game for each compatible subbasis, does
    \(\kappa\) necessarily admit a \(\sigma\)-complete
    free ultrafilter?
\end{question}
There are some fairly obvious candidates for such filters on \(\kappa\) using
these hypotheses, some of which are evidently \(\sigma\)-complete filters and others
which are evidently ultrafilters.
We have not, however, found a proof which demonstrates that such candidates
are both \(\sigma\)-complete and ultrafilters.

\section{Commentary on Large Covers} \label{section:LargeCovers}

For two classes \(\mathcal A\) and \(\mathcal B\), we use \(\binom{\mathcal A}{\mathcal B}\) to mean that,
for every \(\mathscr U \in \mathcal A\), there exists \(\varphi : \omega \to \mathscr U\)
such that \(\{ \varphi(n) : n \in \omega \} \in \mathcal B\).
For example, note that \(\binom{\mathcal O}{\mathcal O}\) is exactly the Lindel\"{o}f property.

\begin{lemma} \label{lem:CompactnessThing}
    Suppose \(\mathscr U\) is an open cover of a space \(X\).
    If \(\mathscr U\) admits no finite subcovers, then
    \[\mathscr W := \left\{ \bigcup \mathscr F : \mathscr F \in \left[ \mathscr U \right]_+^{<\aleph_0} \right\}\]
    is a \(k\)-cover of \(X\).
    
    Consequently, for any space \(X\), the following are equivalent.
    \begin{enumerate}[label=(\roman*)]
        \item \label{lolCompact}
        \(X\) is compact.
        \item \label{lolLambdaCompact}
        Every large cover of \(X\) admits a finite subcover.
        \item \label{lolOmegaCompact}
        Every \(\omega\)-cover of \(X\) admits a finite subcover.
        \item \label{lolkCompact}
        Every \(k\)-cover of \(X\) admits a finite subcover.
    \end{enumerate}
\end{lemma}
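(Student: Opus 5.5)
For the first assertion, I would check the two defining requirements of a \(k\)-cover directly. Each \(\bigcup \mathscr F\) with \(\mathscr F \in [\mathscr U]_+^{<\aleph_0}\) is open. It is nonempty, since \(\mathscr F\) is nonempty and the members of \(\mathscr U \subseteq \mathscr T_X\) are nonempty. It is proper, since otherwise \(\mathscr F\) would be a finite subcover. Hence \(\mathscr W \subseteq \mathscr T_X\). Now take a compact \(K \subseteq X\). If \(K = \varnothing\), any member of \(\mathscr W\) contains it; \(\mathscr W\) is nonempty because \(X\) is nonempty and \(\mathscr U\) covers \(X\). Otherwise, \(\mathscr U\) covers \(K\), so compactness gives a nonempty finite \(\mathscr F \subseteq \mathscr U\) with \(K \subseteq \bigcup \mathscr F \in \mathscr W\). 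Finally, \(\mathscr W\) covers \(X\) because it contains every member of \(\mathscr U\), taking singleton \(\mathscr F\).

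For the equivalences, I would use the chain \(\mathcal K_X \subseteq \Omega_X \subseteq \Lambda_X \subseteq \mathcal O_X\). It gives \ref{lolCompact} \(\Rightarrow\) \ref{lolLambdaCompact} \(\Rightarrow\) \ref{lolOmegaCompact} \(\Rightarrow\) \ref{lolkCompact} immediately. If every open cover has a finite subcover, then in particular every large cover does. The later implications hold because an \(\omega\)-cover is a large cover and a \(k\)-cover is an \(\omega\)-cover, so the property restricts to the smaller class. Here an arbitrary open cover possibly containing \(X\) itself poses no issue, since a cover containing \(X\) trivially has a finite subcover.

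The only substantive step is \ref{lolkCompact} \(\Rightarrow\) \ref{lolCompact}, which I would prove by contraposition using the first part. Suppose \(X\) is not compact, and take an open cover with no finite subcover. Such a cover cannot contain \(X\), and I would discard \(\varnothing\) if it appears, so it lies in \(\mathcal O_X\). Call it \(\mathscr U\). By the first part, \(\mathscr W\) is a \(k\)-cover. If \(\mathscr W\) had a finite subcover \(\{\bigcup \mathscr F_1, \dots, \bigcup \mathscr F_m\}\), then \(\mathscr F_1 \cup \dots \cup \mathscr F_m\) would be a finite subcover of \(\mathscr U\), which is a contradiction. So \ref{lolkCompact} fails.

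The main (minor) obstacle is the bookkeeping forced by the convention that covers consist of proper nonempty open sets. One must verify that \(\mathscr W \subseteq \mathscr T_X\); properness is exactly where the no-finite-subcover hypothesis is used. One must also handle the empty compact set.
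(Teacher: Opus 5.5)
Your proof is correct and follows the paper's argument. The chain \(\mathcal K_X \subseteq \Omega_X \subseteq \Lambda_X \subseteq \mathcal O_X\) gives (i)\(\Rightarrow\)(ii)\(\Rightarrow\)(iii)\(\Rightarrow\)(iv), and (iv)\(\Rightarrow\)(i) goes by contraposition through the finite-union closure \(\mathscr W\), since a finite subcover of \(\mathscr W\) unpacks to one of \(\mathscr U\). The only difference is that you spell out the checks the paper merely asserts: that \(\mathscr W \subseteq \mathscr T_X\), with properness coming from the no-finite-subcover hypothesis, and that \(\mathscr W\) is a \(k\)-cover, including the empty compact set.
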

\begin{proof}
    It is evident that \ref{lolCompact}\(\implies\)\ref{lolLambdaCompact}\(\implies\)\ref{lolOmegaCompact}\(\implies\)\ref{lolkCompact}.
    To show that \ref{lolkCompact}\(\implies\)\ref{lolCompact}, suppose \(X\) fails to be compact
    and let \(\mathscr U\) be an open cover of \(X\) admitting no finite subcover.
    Note then that
    \[\mathscr W := \left\{ \bigcup \mathscr F : \mathscr F \in \left[ \mathscr U \right]_+^{<\aleph_0} \right\}\]
    is a \(k\)-cover of \(X\).
    Moreover, note that, for any finite \(\mathscr F \subseteq \mathscr W\), there is a finite \(\mathscr G \subseteq \mathscr U\)
    such that \(\bigcup \mathscr F = \bigcup \mathscr G\).
    Hence, \(\mathscr W\) is a \(k\)-cover of \(X\) which admits no finite subcover.
\end{proof}

Though the following is certainly well-known, we provide a proof here for the benefit of the reader.
\begin{lemma} \label{lem:BasicLambdaLindelof}
    For any space \(X\), the following are equivalent.
    \begin{enumerate}[label=(\roman*)]
        \item \label{BasicLindelof}
        \(X\) is Lindel\"{o}f.
        \item \label{BasicLambdaLindelof}
        \(X\) is \(\lambda\)-Lindel\"{o}f.
        \item \label{BasicOmegaLambdaLindelof}
        \(X \models \binom{\Omega}{\Lambda}\).
        \item \label{BasicOmegaOpen}
        \(X \models \binom{\Omega}{\mathcal O}\).
        \item \label{BasicKLambdaLindelof}
        \(X \models \binom{\mathcal K}{\Lambda}\).
        \item \label{BasicKOpen}
        \(X \models \binom{\mathcal K}{\mathcal O}\).
    \end{enumerate}
\end{lemma}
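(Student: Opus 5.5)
I will establish the cycle of implications \ref{BasicLindelof}\(\implies\)\ref{BasicLambdaLindelof}\(\implies\)\ref{BasicOmegaLambdaLindelof}\(\implies\)\ref{BasicOmegaOpen}\(\implies\)\ref{BasicKOpen}\(\implies\)\ref{BasicLindelof}, and separately attach \ref{BasicKLambdaLindelof} via \ref{BasicOmegaLambdaLindelof}\(\implies\)\ref{BasicKLambdaLindelof}\(\implies\)\ref{BasicKOpen}. Most of these implications are immediate from the inclusions \(\mathcal K_X \subseteq \Omega_X \subseteq \Lambda_X \subseteq \mathcal O_X\). For \ref{BasicLambdaLindelof}\(\implies\)\ref{BasicOmegaLambdaLindelof}, an \(\omega\)-cover is a large cover, so a countable large subcover exists; enumerate it as \(\varphi:\omega\to\mathscr U\). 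Finiteness is excluded since a large cover is infinite whenever \(X\) is nonempty. Next, \ref{BasicOmegaLambdaLindelof}\(\implies\)\ref{BasicOmegaOpen} and \ref{BasicKLambdaLindelof}\(\implies\)\ref{BasicKOpen} hold because \(\Lambda_X \subseteq \mathcal O_X\). Also \ref{BasicOmegaLambdaLindelof}\(\implies\)\ref{BasicKLambdaLindelof} and \ref{BasicOmegaOpen}\(\implies\)\ref{BasicKOpen} hold because \(\mathcal K_X \subseteq \Omega_X\). Degenerate cases where the relevant cover class is empty make the statements vacuous, which is harmless in these directions.

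For \ref{BasicKOpen}\(\implies\)\ref{BasicLindelof}, I would use Lemma \ref{lem:CompactnessThing}. Let \(\mathscr U\) be an open cover of \(X\). If \(\mathscr U\) has a finite subcover we are done. Otherwise \(\mathscr W = \{\bigcup\mathscr F : \mathscr F \in [\mathscr U]_+^{<\aleph_0}\}\) is a \(k\)-cover. One must check that it consists of proper sets, which holds since no finite union equals \(X\). By \ref{BasicKOpen} there are countably many \(\bigcup \mathscr F_n\) covering \(X\), and then \(\bigcup_n \mathscr F_n\) is a countable subcover of \(\mathscr U\). A small technicality is that open covers in \(\mathcal O_X\) exclude \(X\), while a general open cover for Lindel\"ofness might contain \(X\); that case is trivial.

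The main step is \ref{BasicLindelof}\(\implies\)\ref{BasicLambdaLindelof}. Let \(\mathscr U\) be a large cover. I would build countable subfamilies \(\mathscr C_0 \subseteq \mathscr C_1 \subseteq \cdots\) recursively. Let \(\mathscr C_0\) be a countable subcover, which exists by Lindel\"ofness. Given \(\mathscr C_n\) countable, for each \(x\) the set \(\{U \in \mathscr U\setminus\mathscr C_n : x \in U\}\) is nonempty, since \(x\) lies in infinitely many members of \(\mathscr U\) and the members of \(\mathscr C_n\) containing \(x\) do not exhaust them. This first needs the observation that \(\{U \in \mathscr C_n : x\in U\}\) could itself be infinite, in which case \(x\) is already fine. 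So for the points \(x\) at which \(\mathscr C_n\) contains only finitely many members through \(x\), the family \(\mathscr U\setminus\mathscr C_n\) covers them.

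The obstacle is that the set of such points need not be closed, hence need not be Lindel\"of. I would circumvent this with a cleaner route. For each finite \(\mathscr F \subseteq \mathscr C_n\), the family \(\mathscr U \setminus \mathscr F\) is still an open cover of \(X\), because every point lies in infinitely many members of \(\mathscr U\). Choose a countable subcover \(\mathscr D_{\mathscr F}\) of it. Then set \(\mathscr C_{n+1} = \mathscr C_n \cup \bigcup_{\mathscr F}\mathscr D_{\mathscr F}\), which is countable, and \(\mathscr C = \bigcup_n \mathscr C_n\). To verify that \(\mathscr C\) is large, suppose \(x\) lies in only finitely many members of \(\mathscr C\), forming \(\mathscr F\). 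Then \(\mathscr F \subseteq \mathscr C_n\) for some \(n\), and \(\mathscr D_{\mathscr F}\subseteq\mathscr C\) contains a member through \(x\) outside \(\mathscr F\), a contradiction.
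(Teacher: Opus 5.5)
Your proof is correct and follows essentially the same route as the paper. The easy implications come from the inclusions $\mathcal K_X \subseteq \Omega_X \subseteq \Lambda_X \subseteq \mathcal O_X$, and (vi)$\implies$(i) closes a cover with no finite subcover under finite unions to get a $k$-cover (the paper argues this contrapositively). For (i)$\implies$(ii), both arguments repeatedly apply Lindel\"{o}fness to $\mathscr U$ minus a finite subfamily, which is still a cover because $\mathscr U$ is large; the difference is bookkeeping. You diagonalize at each stage against every finite subfamily of the countable $\mathscr C_n$, so the largeness check is a one-line contradiction. The paper removes only the single growing finite set $\{U_{j,\ell} : j,\ell \leq k\}$ per stage and then verifies largeness through an explicit injective enumeration.
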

\begin{proof}
    We start by showing \ref{BasicLindelof}\(\implies\)\ref{BasicLambdaLindelof}.
    Suppose \(X\) is Lindel\"{o}f and that \(\mathscr U\) is a large cover of \(X\).
    Set \(\mathscr F_0 = \varnothing\) and choose \(\{ U_{0,n} : n \in \omega \} \subseteq \mathscr U\) which covers \(X\).
    Note that \(\{ U_{0,n} : n \in \omega \}\) need not necessarily be infinite.
    For \(k \in \omega\), suppose we've defined \(\left\{ \mathscr F_j \in \left[ \mathscr U \right]^{<\aleph_0} : j \leq k \right\}\) and
    \(\left\{\{ U_{j,n} : n \in \omega \} : j \leq k \right\}\).
    Then define
    \[\mathscr F_{k+1} = \{ U_{j,\ell} : j,\ell \leq k \}.\]
    Since \(\mathscr U\) is a large cover, \(\mathscr U \setminus \mathscr F_{k+1}\) is a cover of \(X\).
    So we can choose \[\{ U_{k+1,n} : n \in \omega \} \subseteq \mathscr U \setminus \mathscr F_{k+1}\] which covers \(X\)
    (and is not necessarily infinite).
    This defines, for all \(m \in \omega\), \(\{ U_{m,n} : n \in \omega \} \subseteq \mathscr U\).
    
    We claim that \(\{ U_{n,m} : n, m \in \omega \} \subseteq \mathscr U\) is a large cover of \(X\).
    So let \(x \in X\), set \(g(0) = 0\), and choose \(f(0) \in \omega\) for which
    \(x \in U_{g(0),f(0)}\).
    Now, for \(k \in \omega\), suppose we have defined
    \(\langle g(j) \in \omega : j \leq k \rangle\) and \(\langle f(j) \in \omega : j \leq k \rangle\) with the properties that
    \begin{itemize}[leftmargin=2em]
        \item \(\langle g(j) : j \leq k \rangle\) is strictly increasing,
        \item \(f(j) < g(j+1)\) for each \(j < k\), and
        \item \(x \in U_{g(j) , f(j)}\) for each \(j \leq k\).
    \end{itemize}
    Define \(g(k+1) = 1 + g(k) + \max \{ f(j) : j \leq k \}\) and note that
    \[\{ U_{g(j), f(j)} : j \leq k \} \subseteq \mathscr F_{g(k+1)}.\]
    Then there must be some \(f(k+1) \in \omega\)
    for which \[x \in U_{g(k+1), f(k+1)} \in \mathscr U \setminus \mathscr F_{g(k+1)}.\]
    This completes an injective enumeration \(\langle U_{g(n), f(n)} : n \in \omega \rangle\)
    of elements of \(\mathscr U\) which contain \(x\).
    Conclusively, \(\{ U_{n,m} : n,m \in \omega\}\) is a large cover of \(X\).

    The implications
    \ref{BasicLambdaLindelof}\(\implies\)\ref{BasicOmegaLambdaLindelof}
    and
    \ref{BasicOmegaLambdaLindelof}\(\implies\)\ref{BasicKLambdaLindelof}
    are immediate.
    It is also clear that \ref{BasicOmegaLambdaLindelof}\(\implies\)\ref{BasicOmegaOpen}
    and \ref{BasicKLambdaLindelof}\(\implies\)\ref{BasicKOpen}.

    We now show that
    \ref{BasicOmegaOpen}\(\implies\)\ref{BasicLindelof}
    and \ref{BasicKOpen}\(\implies\)\ref{BasicLindelof}
    simultaneously by way of the contrapositive.
    Suppose \(X\) fails to be Lindel\"{o}f and let \(\mathscr U\)
    be an open cover of \(X\) which has no countable subcover.
    In particular, note that \(\mathscr U\) must be infinite and has no finite subcover.
    Let
    \[\mathscr W = \left\{ \bigcup \mathscr F : \mathscr F \in \left[ \mathscr U \right]_+^{<\aleph_0} \right\}\]
    and note that \(\mathscr W\) is a \(k\)-cover of \(X\) and an \(\omega\)-cover of \(X\).
    We claim additionally that \(\mathscr W\) has no countable subcover.
    Indeed, note that, for any countable \(\mathscr F \subseteq \mathscr W\), there exists a countable \(\mathscr G \subseteq \mathscr U\)
    such that \(\bigcup \mathscr F = \bigcup \mathscr G\).
    Since \(\mathscr U\) admits no countable subcover, we see that \(\mathscr W\) also admits no countable
    subcover.
\end{proof}

\subsection{The Case for Finite-Selections} \label{subsec:FiniteSelections}

The primary objective of this section is to prove Theorem \ref{thm:LambdaTypeMengerRelations},
which is an equivalence of Menger-type games involving large covers, \(\omega\)-covers, and \(k\)-covers.
We emphasize that we include the cases for \(k\)-covers here for the sake of completion and as a way to contrast
against the situation with the analogous Rothberger-type games.
In particular, one of the main techniques in this section is closing an open cover under finite unions,
which either results in a trivial cover or in a \(k\)-cover.
This technique can clearly not be effectively carried out in the single-selection games, in general.

The following is an expansion of \cite[Theorem 5]{ScheepersPartitionRelations} with a consolidated proof provided for
the benefit of the reader, especially as we will refer to the proof of \ref{equivMenger}\(\implies\)\ref{equivStrategic}
in Lemma \ref{lem:RothbergerEquivalence}.
\begin{lemma} \label{lem:MengerEquivalence}
    For any space \(X\), the following are equivalent.
    \begin{enumerate}[label=(\roman*)]
        \item \label{equivMenger}
        \(X\) is Menger.
        \item \label{equivLambdaMenger}
        \(X\) is \(\lambda\)-Menger.
        \item \label{equivOmegaLambda}
        \(X \models \mathsf S_{\mathrm{fin}}(\Omega, \Lambda)\).
        \item \label{equivOmegaOpen}
        \(X \models \mathsf S_{\mathrm{fin}}(\Omega, \mathcal O)\).
        \item \label{equivKLambda}
        \(X \models \mathsf S_{\mathrm{fin}}(\mathcal K, \Lambda)\).
        \item \label{equivKOpen}
        \(X \models \mathsf S_{\mathrm{fin}}(\mathcal K, \mathcal O)\).
        \item \label{equivMengerStrategic}
        \(\mathrm{I} \not\uparrow \mathsf G_{\mathrm{fin}}(\mathcal O_X, \mathcal O_X)\).
        \item \label{equivStrategic}
        \(\mathrm{I} \not\uparrow \mathsf G_{\mathrm{fin}}(\Lambda_X, \Lambda_X)\).
        \item \label{equivStrategicExtra}
        \(\mathrm{I} \not\uparrow \mathsf G_{\mathrm{fin}}(\Omega_X, \Lambda_X)\).
        \item \label{equivStrategicExtraExtra}
        \(\mathrm{I} \not\uparrow \mathsf G_{\mathrm{fin}}(\Omega_X, \mathcal O_X)\).
        \item \label{equivStratKLambda}
        \(\mathrm{I} \not\uparrow \mathsf G_{\mathrm{fin}}(\mathcal K_X, \Lambda_X)\).
        \item \label{equivStratKOpen}
        \(\mathrm{I} \not\uparrow \mathsf G_{\mathrm{fin}}(\mathcal K_X, \mathcal O_X)\).
    \end{enumerate}
\end{lemma}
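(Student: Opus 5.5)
The plan is to organize the twelve conditions into the selection-principle block \ref{equivMenger}--\ref{equivKOpen} and the game block \ref{equivMengerStrategic}--\ref{equivStratKOpen}. I would then close a cycle using three kinds of arrows: monotonicity, the trivial passage from a game to its selection principle, and two substantive implications.

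\textbf{Routine implications.} For any \(\mathcal A, \mathcal B\), \(\mathrm{I} \not\uparrow \mathsf G_{\mathrm{fin}}(\mathcal A, \mathcal B)\) implies \(X \models \mathsf S_{\mathrm{fin}}(\mathcal A, \mathcal B)\), since a predetermined sequence of covers is a strategy for I. This gives \ref{equivMengerStrategic}\(\implies\)\ref{equivMenger}, \ref{equivStrategic}\(\implies\)\ref{equivLambdaMenger}, and so on down the list. The inclusions \(\mathcal K_X \subseteq \Omega_X \subseteq \Lambda_X \subseteq \mathcal O_X\) then yield the monotone chains
\[\ref{equivLambdaMenger} \implies \ref{equivOmegaLambda} \implies \ref{equivOmegaOpen} \implies \ref{equivKOpen}, \qquad \ref{equivOmegaLambda} \implies \ref{equivKLambda} \implies \ref{equivKOpen},\]
and the same pattern for the game versions. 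To close the cycle it therefore suffices to prove two implications:
\begin{enumerate}[label=(\alph*)]
    \item \ref{equivKOpen}\(\implies\)\ref{equivMenger};
    \item \ref{equivMenger}\(\implies\)\ref{equivStrategic}.
\end{enumerate}
Once these are in hand, \ref{equivStrategic} implies all the remaining game conditions by monotonicity. It also implies \ref{equivMengerStrategic}: a strategy for I in \(\mathsf G_{\mathrm{fin}}(\mathcal O_X, \mathcal O_X)\) can be converted to one in \(\mathsf G_{\mathrm{fin}}(\Lambda_X, \Lambda_X)\) by replacing each cover \(\mathscr U\) with the large cover of finite unions from the proof of Lemma \ref{lem:CompactnessThing}, and translating P2's finite picks back to finitely many members of \(\mathscr U\).

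\textbf{Step (a).} This is the standard finite-union trick. If \(X\) is compact, \(X\) is trivially Menger. Otherwise, given open covers \(\mathscr U_n\), I may assume by Lemma \ref{lem:CompactnessThing} that none has a finite subcover, since a finite subcover can simply be selected. Then \(\mathscr W_n = \{ \bigcup \mathscr F : \mathscr F \in [\mathscr U_n]_+^{<\aleph_0} \}\) is a \(k\)-cover. Applying \ref{equivKOpen} to \(\langle \mathscr W_n : n \in \omega \rangle\) gives finitely many finite unions in each round covering \(X\), and unpacking them gives finite selections from each \(\mathscr U_n\) whose union covers \(X\).

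\textbf{Step (b) and the main obstacle.} This is where the real work lies. First I would invoke Hurewicz's theorem that Menger is equivalent to \ref{equivMengerStrategic}. Then, given a strategy \(\sigma\) for I in \(\mathsf G_{\mathrm{fin}}(\Lambda_X, \Lambda_X)\), I would build a strategy \(\tau\) for I in the Menger game and use the fact that \(\tau\) is not winning.

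Two facts drive the construction. Removing finitely many sets from a large cover leaves a large cover, in particular an open cover. Also, the \(\omega \to \omega^2\) bookkeeping of Lemma \ref{lem:BasicLambdaLindelof} shows how to produce a large cover from infinitely many covers by pairwise new sets.

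Concretely, \(\tau\) feeds I's Menger-game moves from \(\sigma\), each time deleting all sets P2 has already chosen. To guarantee that P2's union covers each point infinitely often, rather than merely once, I would have \(\tau\) play at round \(n\) the common refinement \(\{ U \cap V : U \in \sigma(\cdot) \setminus \mathscr F, V \in \dots \}\) of the \(\sigma\)-moves at finitely many pending positions, interleaved along a partition of \(\omega\) into infinitely many infinite blocks. A finite response to a refinement can then be traced back to finite legal responses for \(\sigma\) in each coordinate.

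The key obstacle is making this translation well defined while the \(\sigma\)-play stays a single linear play. If plain refinement does not work, I would instead use the Pawlikowski-style argument: assume \(\sigma\) is winning, use the Menger property on countably many covers extracted from \(\sigma\)'s tree of finite plays, and derive a contradiction. The deletion of previously chosen sets is precisely what forces the selected sets to be pairwise distinct, which is the point the introduction stresses.
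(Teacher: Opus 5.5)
Your routine implications and Step (a) are fine and match the paper, which also gets (vi)\(\implies\)(i) by passing to the \(k\)-covers of finite unions. Your translation for (viii)\(\implies\)(vii) is unnecessary, since Hurewicz's theorem already gives (i)\(\iff\)(vii). As written it also needs a case split: when \(\mathscr U\) has a finite subcover, the family of finite unions contains \(X\) and so is not a legal large cover.

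The genuine gap is Step (b), the only nontrivial implication, for which you do not actually give an argument. The refinement construction is left unspecified (\(V \in \dots\)), and the obstacle you name is real.
\begin{itemize}
\item A run defeating a strategy in the Menger game on \(X\) only yields a cover of \(X\), so each point is guaranteed to be covered once.
\item Refining by finite intersections cannot force unbounded multiplicity. A point may be caught in an early round by a set built from only boundedly many intersected sets.
\item There is no way to refer to several positions of a single run of \(\sigma\) at once. Later moves of \(\sigma\) depend on responses not yet made, and earlier moves have already been answered.
\end{itemize}
Your fallback merely restates the goal: it amounts to reproving Hurewicz's theorem for large covers. Applying the Menger property to countably many covers extracted from \(\sigma\)'s tree again gives only single coverage.

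The missing idea is to move the ``infinitely often'' requirement into the space itself. The space \(X \times \omega\) is Menger, so Hurewicz's theorem applies to it.
\begin{itemize}
\item Using \(\lambda\)-Lindel\"{o}fness (Lemma \ref{lem:BasicLambdaLindelof}), fix for each large cover \(\mathscr U\) an injective enumeration \(\gamma(\mathscr U)\) of a countable large subcover.
\item Let P1 on \(X \times \omega\) play \(\{\gamma(\mathscr U)_m \times \{n\} : m \ge n,\ \gamma(\mathscr U)_m \text{ not chosen earlier}\}\), where \(\mathscr U\) is \(\sigma\)'s current move on the projected history. This is an open cover of \(X \times \omega\), because deleting finitely many sets from a large cover leaves a large cover.
\item Projecting P2's finite picks \(\mathscr F_k\) to \(\mathscr G_k = \{U : (\exists n)\ U \times \{n\} \in \mathscr F_k\}\) gives a legal run against \(\sigma\).
\item A run defeating this auxiliary strategy covers \(X \times \omega\). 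Given \(x\) and \(k\), choose \(p\) larger than every copy index used in rounds \(\le k\). Then \((x,p)\) is covered in some round \(m > k\) by some \(U \times \{p\}\), and by the deletion \(U\) was not selected before.
\end{itemize}
Hence \(x\) lies in infinitely many distinct selected sets, so \(\bigcup_k \mathscr G_k\) is a large cover and \(\sigma\) is not winning.
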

\begin{proof}
    The equivalence of \ref{equivMenger} and \ref{equivMengerStrategic} is the well-known theorem of Hurewicz.
    The implications
    \ref{equivLambdaMenger}\(\implies\)\ref{equivOmegaLambda},
    \ref{equivOmegaLambda}\(\implies\)\ref{equivOmegaOpen},
    \ref{equivOmegaLambda}\(\implies\)\ref{equivKLambda},
    \ref{equivOmegaOpen}\(\implies\)\ref{equivKOpen},
    \ref{equivKLambda}\(\implies\)\ref{equivKOpen},
    \ref{equivStrategic}\(\implies\)\ref{equivStrategicExtra},
    \ref{equivStrategicExtra}\(\implies\)\ref{equivStrategicExtraExtra},
    \ref{equivStrategicExtra}\(\implies\)\ref{equivStratKLambda},
    \ref{equivStrategicExtraExtra}\(\implies\)\ref{equivStratKOpen},
    \ref{equivStratKLambda}\(\implies\)\ref{equivStratKOpen}
    \ref{equivStrategic}\(\implies\)\ref{equivLambdaMenger},
    \ref{equivStrategicExtra}\(\implies\)\ref{equivOmegaLambda},
    \ref{equivStrategicExtraExtra}\(\implies\)\ref{equivOmegaOpen},
    \ref{equivStratKLambda}\(\implies\)\ref{equivKLambda},
    and \ref{equivStratKOpen}\(\implies\)\ref{equivKOpen}
    are immediate.
    
    We show that \ref{equivKOpen}\(\implies\)\ref{equivMenger} by the contrapositive.
    Assume \(X\) is a space which fails to be Menger and let \(\langle \mathscr U_n : n \in \omega \rangle\)
    be a sequence of open covers of \(X\) such that, for any
    \[\langle \mathscr F_n : n \in \omega \rangle \in \prod_{n\in\omega} \left[ \mathscr U_n \right]_+^{<\aleph_0},\]
    \(\bigcup\{ \mathscr F_n : n \in \omega \}\) fails to be a cover of \(X\).
    Note that, in particular, \(\mathscr U_n\) fails to have a finite subcover for each \(n \in \omega\).
    Then, consider the sequence \(\left\langle \widetilde{\mathscr U}_n : n \in \omega \right\rangle\) of
    \(k\)-covers of \(X\).
    Note that any sequence of finite selections from \(\left\langle \widetilde{\mathscr U}_n : n \in \omega \right\rangle\)
    corresponds to a sequence of finite selections \(\langle \mathscr U_n : n \in \omega \rangle\), which means that any such
    selection fails to cover \(X\).
    That is,
    \(X \not\models \mathsf S_{\mathrm{fin}}(\mathcal K, \mathcal O)\).

    To finish the proof, we show that
    \ref{equivMenger}\(\implies\)\ref{equivStrategic}
    using the suggested technique in \cite[Theorem 5]{ScheepersPartitionRelations}.
    Suppose that \(X\) is Menger and let \(\sigma\) be any strategy for P1
    in \(\mathsf G_{\mathrm{fin}}(\Lambda_X, \Lambda_X)\).
    Since \(X\) is Menger, \(X\) is Lindel\"{o}f and, hence, is \(\lambda\)-Lindel\"{o}f
    by Lemma \ref{lem:BasicLambdaLindelof}.
    Thus, we can fix a choice \(\gamma : \Lambda_X \to \mathscr T_X^\omega\) such that,
    for each \(\mathscr U \in \Lambda_X\),
    \[\{ \gamma(\mathscr U)_n : n \in \omega \} \in \wp(\mathscr U) \cap \Lambda_X\]
    and \(\gamma(\mathscr U) : \omega \to \mathscr U\) is injective.

    Since \(X\) is Menger, \(X \times \omega\) is Menger.
    We define an auxiliary strategy \(\widetilde\sigma\) in the game
    \(\mathsf G_{\mathrm{fin}}(\mathcal O_{X \times \omega} , \mathcal O_{X \times \omega})\)
    as follows.
    We first set
    \[\widetilde\sigma\langle \rangle = \left\{ \gamma(\sigma\langle \rangle)_m \times \{ n\} : m \geq n \geq 0 \right\}.\]
    Note that \(\widetilde\sigma\langle \rangle\) is an open cover of \(X \times \omega\)
    since \(\sigma\langle \rangle\) is a large cover of \(X\).

    For \(k \in \omega\), suppose we've defined \(\widetilde\sigma\) for every sequence
    \(\langle \mathscr F_j : j < k \rangle\) of legal plays by P2 in such a way that, if,
    given a sequence \(\langle \mathscr F_j : j < k \rangle\) of legal plays by P2,
    we have another sequence \(\langle \mathscr G_j : j < k \rangle\) of finite sets such that
    \(U \in \mathscr G_j\) if and only if there is some \(n \in\omega\) for which
    \(U \times \{ n \} \in \mathscr F_j\) for each \(j < k\) with the property that, for every \(\ell \leq k\),
    \begin{align*}
        &\widetilde\sigma\langle \mathscr F_j : j < \ell \rangle\\
        &= \left\{ \gamma(\sigma\langle \mathscr G_j : j < \ell \rangle)_m \times \{ n \} : m \geq n \geq 0
        \wedge \gamma(\sigma\langle \mathscr G_j : j < \ell \rangle)_m \not\in \bigcup \{ \mathscr G_j : j < \ell \} \right\}.
    \end{align*}
    For \(\mathscr F_k \in \widetilde\sigma\langle \mathscr F_j : j < k \rangle\),
    let \[\mathscr G_k = \{ U \in \mathscr T_X : (\exists n \in \omega)\ U \times \{ n \} \in \mathscr F_k \}.\]
    Note that the cardinality of \(\mathscr G_k\) is bounded by the cardinality of \(\mathscr F_k\) and that,
    for any \(U \in \mathscr G_k\), there is some \(m \in \omega\) such that
    \(U = \gamma(\sigma\langle \mathscr G_j : j < k \rangle)_m\).
    In particular,
    \[\mathscr G_k \in \left[ \sigma\langle \mathscr G_j : j < k \rangle \right]_+^{<\aleph_0}.\]
    Then \(\sigma\langle \mathscr G_j : j \leq k \rangle \in \Lambda_X\) and we can define
    \begin{align*}
        &\widetilde\sigma\langle \mathscr F_j : j \leq k \rangle\\
        &= \left\{ \gamma(\sigma\langle \mathscr G_j : j \leq k \rangle)_m \times \{ n \} : m \geq n \geq 0
        \wedge \gamma(\sigma\langle \mathscr G_j : j \leq k \rangle)_m \not\in \bigcup_{j=0}^k \mathscr G_j \right\}.
    \end{align*}
    To see that \(\widetilde\sigma\langle \mathscr F_j : j \leq k \rangle\) is an open cover of \(X \times \omega\),
    let \((x,n) \in X \times \omega\).
    Since \(\bigcup \{ \mathscr G_j : j \leq k \}\) is finite we can let \(m_0 \in \omega\) be such that
    \[\gamma(\sigma\langle \mathscr G_j : j \leq k \rangle)_m \not\in \bigcup \{ \mathscr G_j : j \leq k \}\]
    for each \(m \geq m_0\).
    Then, since \[\{ \gamma(\sigma\langle \mathscr G_j : j \leq k \rangle)_m : m \in \omega \} \in \Lambda_X,\]
    we can find \(m \geq \max \{ m_0 , n \}\) such that \(x \in \gamma(\sigma\langle \mathscr G_j : j \leq k \rangle)_m\).
    In particular,
    \[(x,n) \in \gamma(\sigma\langle \mathscr G_j : j \leq k \rangle)_m \times \{n\}.\]
    Thus, \(\widetilde\sigma\) is defined.

    Now, since \(X \times \omega\) is Menger, \(\widetilde\sigma\) cannot be a winning strategy by the Hurewicz Theorem.
    So there is some sequence \(\langle \mathscr F_k : k \in \omega \rangle\) such that,
    for each \(k \in \omega\),
    \[\mathscr F_k \in \left[ \widetilde\sigma\langle \mathscr F_j : j < k \rangle \right]^{<\aleph_0}_+\]
    and
    \[\bigcup \{ \mathscr F_k : k \in \omega\} \in \mathcal O_{X \times \omega}.\]
    Note that, by construction,
    \[\mathscr G_k = \left\{ U \in \mathscr T_X : (\exists n \in \omega)\ U \times \{n\} \in \mathscr F_k \right\}\]
    for each \(k \in \omega\) has the property that
    \[\mathscr G_k \in \left[ \sigma\langle \mathscr G_j : j < k \rangle\right]_+^{<\aleph_0}.\]
    We claim that
    \[\bigcup \{ \mathscr G_k : k \in \omega \} \in \Lambda_X.\]
    So let \(x \in X\) and let \(k \in \omega\) be arbitrary.
    Then let
    \[p = 1 + \max \left\{ n \in \omega : (\exists U \in \mathscr T_X)\ U \times \{ n \} \in \bigcup_{j=0}^k \mathscr F_j \right\},\]
    which is finite since \(\bigcup \{ \mathscr F_j : j \leq k \}\) is finite.
    Now, consider \((x,p) \in X \times \omega\).
    There must be \(m \in \omega\) and \(U \times \{ p \} \in \mathscr F_m\) such that
    \((x,p) \in U \times \{p\}\).
    By the definition of \(p\), \(U \not\in \bigcup_{j=0}^k \mathscr G_j\).
    Hence, \(m > k\) and \(x \in U \in \mathscr G_m\).
    Since \(k\) was arbitrary, \(\bigcup\{ \mathscr G_k : k \in \omega \}\) is a large cover of \(X\).
\end{proof}
\begin{lemma} \label{lem:TwoStrategic}
    For any space \(X\), the following are equivalent.
    \begin{enumerate}[label=(\roman*)]
        \item \label{TwoStratMenger}
        \(\mathrm{II} \uparrow \mathsf G_{\mathrm{fin}}(\mathcal O_X, \mathcal O_X)\).
        \item \label{TwoStratLambdaMenger}
        \(\mathrm{II} \uparrow \mathsf G_{\mathrm{fin}}(\Lambda_X, \Lambda_X)\).
        \item \label{TwoStratOmegaLambdaMenger}
        \(\mathrm{II} \uparrow \mathsf G_{\mathrm{fin}}(\Omega_X, \Lambda_X)\).
        \item \label{TwoStratOmegaLambdaMengerExtra}
        \(\mathrm{II} \uparrow \mathsf G_{\mathrm{fin}}(\Omega_X, \mathcal O_X)\).
        \item \label{TwoStratKLambda}
        \(\mathrm{II} \uparrow \mathsf G_{\mathrm{fin}}(\mathcal K_X, \Lambda_X)\).
        \item \label{TwoStratKO}
        \(\mathrm{II} \uparrow \mathsf G_{\mathrm{fin}}(\mathcal K_X, \mathcal O_X)\).
    \end{enumerate}
\end{lemma}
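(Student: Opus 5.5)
The plan is to establish the chain of immediate implications first and then close the cycle with two nontrivial translations, \ref{TwoStratKO}\(\implies\)\ref{TwoStratMenger} and \ref{TwoStratMenger}\(\implies\)\ref{TwoStratLambdaMenger}.

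Since \(\mathcal K_X \subseteq \Omega_X \subseteq \Lambda_X \subseteq \mathcal O_X\), shrinking the first player's options or weakening the winning condition can only help P2. This gives at once \ref{TwoStratLambdaMenger}\(\implies\)\ref{TwoStratOmegaLambdaMenger}\(\implies\)\ref{TwoStratOmegaLambdaMengerExtra}\(\implies\)\ref{TwoStratKO} and \ref{TwoStratOmegaLambdaMenger}\(\implies\)\ref{TwoStratKLambda}\(\implies\)\ref{TwoStratKO}.

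For \ref{TwoStratKO}\(\implies\)\ref{TwoStratMenger}, fix a winning strategy \(\tau\) for P2 in \(\mathsf G_{\mathrm{fin}}(\mathcal K_X, \mathcal O_X)\). We build a strategy for P2 in the Menger game as follows.
\begin{itemize}[leftmargin=2em]
    \item If P1 ever plays an open cover \(\mathscr U\) with a finite subcover, P2 plays that finite subcover and has already won; all later moves are arbitrary.
    \item While no such cover has appeared, each cover \(\mathscr U\) played by P1 is translated to \(\widetilde{\mathscr U} = \{ \bigcup \mathscr F : \mathscr F \in [\mathscr U]_+^{<\aleph_0}\}\). By Lemma \ref{lem:CompactnessThing}, \(\widetilde{\mathscr U}\) is a \(k\)-cover, and its members are proper subsets of \(X\) precisely because \(\mathscr U\) has no finite subcover.
    \item P2 feeds the history of translated covers to \(\tau\) and receives a finite subset of \(\widetilde{\mathscr U}\).
    \item For each member of that subset, P2 uses a fixed choice of a finite \(\mathscr F \subseteq \mathscr U\) representing it, and responds with the union of these chosen \(\mathscr F\)'s.
\end{itemize}
The union of P2's responses has the same union as the union of \(\tau\)'s responses. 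Since \(\tau\) wins, that union is \(X\).

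For \ref{TwoStratMenger}\(\implies\)\ref{TwoStratLambdaMenger}, mimic the proof of \ref{equivMenger}\(\implies\)\ref{equivStrategic} in Lemma \ref{lem:MengerEquivalence}, with the roles reversed. P2 has a winning strategy in the Menger game on \(X\), so \(X\) is Menger, hence Lindel\"{o}f, hence \(\lambda\)-Lindel\"{o}f by Lemma \ref{lem:BasicLambdaLindelof}. Fix the same choice \(\gamma\) of injective enumerations of countable large subcovers.

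First we obtain a winning strategy \(\widetilde\tau\) for P2 in \(\mathsf G_{\mathrm{fin}}(\mathcal O_{X\times\omega}, \mathcal O_{X\times\omega})\). Partition \(\omega\) into infinitely many infinite sets \(A_n\). On rounds in \(A_n\), run a copy of the winning strategy on \(X\) against the traces \(\{U : U \times \{n\} \in \mathscr W\}\) of P1's covers \(\mathscr W\) on \(X \times \{n\}\), and lift the answers back to \(X \times \{n\}\). Covers that are trivial on some slice need a finite-subcover escape as above.

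Now, against P1's large covers \(\mathscr U_k\), P2 proceeds round by round:
\begin{itemize}[leftmargin=2em]
    \item P2 plays into \(\widetilde\tau\) the cover \(\{\gamma(\mathscr U_k)_m \times \{n\} : m \geq n,\ \gamma(\mathscr U_k)_m \notin \bigcup_{j<k}\mathscr G_j\}\) of \(X \times \omega\). This is a cover exactly as verified in Lemma \ref{lem:MengerEquivalence}.
    \item P2 answers with the projection \(\mathscr G_k\) of \(\widetilde\tau\)'s finite response.
\end{itemize}
The closing argument of that lemma then shows that \(\bigcup_k \mathscr G_k\) is a large cover. Given \(x\) and \(k\), pick \(p\) beyond every index used in rounds \(\leq k\); the point \((x,p)\) must be covered in a later round by a set not among \(\mathscr G_0, \dots, \mathscr G_k\).

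The main obstacle is this last step: producing P2's winning strategy on \(X \times \omega\) with the interleaving bookkeeping and handling covers that are trivial on some slice. The fallback plan is to skip \(X \times \omega\) entirely. Instead, directly interleave countably many copies of the Menger strategy on \(X\), the \(n\)-th copy fed on rounds in \(A_n\) with \(\{\gamma(\mathscr U_k)_m : m \geq n, \text{not previously chosen}\}\). Each copy then guarantees a fresh neighborhood of every point.
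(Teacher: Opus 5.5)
Your proposal is correct, but your main route for (i)\(\implies\)(ii) differs from the paper's; your fallback is essentially the paper's proof. The immediate implications and (vi)\(\implies\)(i) coincide with the paper.

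\textbf{The paper's argument.} It fixes a bijection \(\beta:\omega^2\to\omega\) and runs the \(a\)-th copy of the winning Menger strategy on the rounds \(\beta(a,t)\). At round \(k\) it feeds that copy \(\mathscr U_k\) minus every set chosen in earlier rounds; this is still a cover because \(\mathscr U_k\) is large. Given \(x\) and \(n\), a copy whose first round exceeds \(n\) covers \(x\) by a set chosen after round \(n\), so that set is fresh. The paper needs neither \(\lambda\)-Lindel\"{o}fness, nor \(\gamma\), nor your restriction \(m\ge n\).

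\textbf{Your main route.} You first transfer P2's winning strategy to \(X\times\omega\), then run the slice trick of Lemma \ref{lem:MengerEquivalence} from P2's side. This works. The covers you feed \(\widetilde\tau\) consist of proper sets and cover \(X\times\omega\), since only finitely many members of each large cover are discarded. The \((x,p)\) argument then puts the covering set in some \(\mathscr G_m\) with \(m>k\) while avoiding \(\bigcup_{j<m}\mathscr G_j\), so it is fresh.

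\textbf{Comparison.} Your route gives a pleasing symmetry with the P1 side of Lemma \ref{lem:MengerEquivalence}. However, the step you call the main obstacle (a winning strategy on \(X\times\omega\)) is itself an interleaving of countably many copies. Once you have that, adding deletion of earlier choices already finishes the proof, so the product layer is redundant. Also, \(\gamma\) and Lindel\"{o}fness can be dropped by feeding \(\{U\times\{n\}: U\in\mathscr U_k\setminus\bigcup_{j<k}\mathscr G_j,\ n\in\omega\}\) instead.

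\textbf{A small correction.} \(\{U : U\times\{n\}\in\mathscr W\}\) is the trace of \(\mathscr W\) on \(X\times\{n\}\) only when \(\mathscr W\) consists of slice rectangles. For an arbitrary open cover of \(X\times\omega\), use the nonempty sets \(\{x : (x,n)\in W\}\) for \(W\in\mathscr W\), with a fixed choice to lift answers back, and escape when one of them equals \(X\). Alternatively, define \(\widetilde\tau\) only on the rectangle covers that actually arise.
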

\begin{proof}
    First, note that the implications
    \ref{TwoStratLambdaMenger}\(\implies\)\ref{TwoStratOmegaLambdaMenger},
    \ref{TwoStratOmegaLambdaMenger}\(\implies\)\ref{TwoStratOmegaLambdaMengerExtra},
    \ref{TwoStratOmegaLambdaMenger}\(\implies\)\ref{TwoStratKLambda},
    \ref{TwoStratOmegaLambdaMengerExtra}\(\implies\)\ref{TwoStratKO},
    and \ref{TwoStratKLambda}\(\implies\)\ref{TwoStratKO}
    are immediate.
    
    We start by showing that \ref{TwoStratMenger}\(\implies\)\ref{TwoStratLambdaMenger}.
    The idea of the proof is to play the Menger game infinitely many times in such a way
    that one diagonalizes away from all initial segments of choices, which we formalize below.

    Fix a bijection \(\beta : \omega^2 \to \omega\) such that, for each \(n \in \omega\),
    \(\langle \beta(n,m) : m \in \omega \rangle\) is increasing.
    Also, for \(n \in \omega\), let \(( a_n, b_n ) = \beta^{-1}(n)\).
    Suppose \(\hat\tau\) is a winning strategy for P2 in \(\mathsf G_{\mathrm{fin}}(\mathcal O_X, \mathcal O_X)\).
    We define a strategy \(\tau\) for P2 in \(\mathsf G_{\mathrm{fin}}(\Lambda_X, \Lambda_X)\) as follows.
    For \(k \in \omega\), suppose we have defined \(\tau\) for every sequence
    \(\langle \mathscr U_j : j < k \rangle\) of large covers of \(X\).
    Additionally suppose that, for each \(j \leq k\) and each sequence
    \(\langle \mathscr U_\ell : \ell < j \rangle\) of large covers of \(X\),
    we have defined an auxiliary sequence \(\langle \mathscr V_\ell : \ell < j \rangle\)
    of open covers of \(X\).
    Then, given a sequence \(\langle \mathscr U_j : j \leq k \rangle\) of large covers of \(X\),
    let
    \[\mathscr V_k = \mathscr U_k \setminus \bigcup \left\{
    \bigcup \left\{ \hat\tau\langle \mathscr V_{\beta(a_j,t)} : t \leq m \rangle : m \leq b_j \right\} : j < k \right\}\]
    and define
    \[\tau\langle \mathscr U_j : j \leq k \rangle = \hat\tau\langle \mathscr V_{\beta(a_k,t)} :t \leq b_k \rangle.\]
    
    Now, we show that \(\tau\) is winning.
    Suppose \(\langle \mathscr U_n : n \in \omega \rangle\) is a sequence of large covers of \(X\)
    and that \(\langle \mathscr V_n : n \in \omega \rangle\) follows the auxiliary sequence
    construction defined above.
    Then consider \(x \in X\) and suppose \(n \in \omega\) is arbitrary.
    Let \[\mathscr F = \{ V \in \tau \langle \mathscr U_\ell : \ell \leq j \rangle : x \in V \wedge j \leq n \}.\]
    We can then let \(p \in \omega\) be such that \(n < \beta(a_p,0)\).
    Then there is some \(k \in \omega\) and \(W \in \hat\tau\langle \mathscr V_{\beta(a_p,j)} : j \leq k \rangle\)
    such that
    \[x \in W \in \hat\tau\langle \mathscr V_{\beta(a_p,j)} : j \leq k \rangle \subseteq \mathscr V_{\beta(a_p,k)}.\]
    Note that
    \[\beta(a_n, b_n) = n < \beta(a_p,0) \leq \beta(a_p,k) =: m = \beta(a_m, b_m)\]
    and that, hence,
    \[x \in W \in \hat\tau\langle \mathscr V_{\beta(a_m,t)} : t \leq b_m \rangle = \tau\langle \mathscr U_j : j \leq m \rangle.\]
    Moreover,
    \[W \in \mathscr V_m = \mathscr U_m \setminus \bigcup \left\{
    \bigcup \left\{ \hat\tau\langle \mathscr V_{\beta(a_j,t)} : t \leq \ell \rangle : \ell \leq b_j \right\} : j < m \right\}.\]
    Note that, in particular, for each \(j \leq n\),
    \[\tau\langle \mathscr U_\ell : \ell \leq j \rangle
    = \hat\tau \langle \mathscr V_{\beta(a_j, t)} : t \leq b_j \rangle\]
    and \(\beta(a_j, b_j) = j \leq n < m\).
    Hence,
    \begin{align*}
    \mathscr F \subseteq \bigcup_{j \leq n} \tau\langle \mathscr U_\ell : \ell \leq j \rangle
    &= \bigcup_{j \leq n} \hat\tau \langle \mathscr V_{\beta(a_j, t)} : t \leq b_j \rangle\\
    &\subseteq \bigcup_{j \leq n} \bigcup \left\{ \hat\tau \langle \mathscr V_{\beta(a_j, t)} : t \leq \ell \rangle : \ell \leq b_j \right\}\\
    &\subseteq \bigcup_{j < m} \bigcup \left\{ \hat\tau \langle \mathscr V_{\beta(a_j, t)} : t \leq \ell \rangle : \ell \leq b_j \right\}.
    \end{align*}
    It follows that \(W \not\in \mathscr F\).
    Therefore, since \(n \in \omega\) was arbitrary, we see that
    \[\bigcup \{ \tau \langle \mathscr U_j : j \leq n \rangle : n \in \omega \}\]
    is a large cover of \(X\).

    We finish by showing \ref{TwoStratKO}\(\implies\)\ref{TwoStratMenger}.
    Suppose P2 has a winning strategy \(\hat\tau\) in \(\mathsf{G}_{\mathrm{fin}}(\mathcal K_X, \mathcal O_X)\) and,
    given any family \(\mathscr U\) of subsets of \(X\), let
    \[\widetilde{\mathscr U} = \left\{ \bigcup \mathscr F : \mathscr F \in \left[ \mathscr U \right]_+^{<\aleph_0} \right\}\]
    and fix a choice \[\mathscr G_{\mathscr U} : \widetilde{\mathscr U} \to \left[ \mathscr U \right]_+^{<\aleph_0}\]
    such that, for each \(W \in \widetilde{\mathscr U}\),
    \[W = \bigcup \mathscr G_{\mathscr U}(W).\]
    We define \(\tau\) in \(\mathsf G_{\mathrm{fin}}(\mathcal O_X, \mathcal O_X)\) in the following manner.
    Let \(n \in \omega\) and suppose \(\tau\) has been defined for every \(\langle \mathscr U_j : j < n \rangle\)
    where \(\mathscr U_j \in \mathcal O_X\) for each \(j < n\).
    Suppose \(\langle \mathscr U_j : j \leq n \rangle\) is a sequence of open covers of \(X\).

    \begin{enumerate}
        \item \label{LambdaTheoremFiniteSubcover}
        If there is some \(\mathscr V \in \left[ \mathscr U_n \right]_+^{<\aleph_0}\) such that \(X = \bigcup \mathscr V\),
        then define \[\tau\langle \mathscr U_j : j \leq n \rangle = \mathscr V.\]
        \item 
        If \(\left\langle \widetilde{\mathscr U}_j : j \leq n \right\rangle\) is a sequence of \(k\)-covers of \(X\),
        then let
        \[\tau\langle \mathscr U_j : j \leq n \rangle =
        \bigcup\mathscr G_{\mathscr U_n}\left[ \hat\tau\left\langle \widetilde{\mathscr U}_j : j \leq n \right\rangle \right].\]
        \item 
        Otherwise, there is some \(j < n\) for which \(\widetilde{\mathscr U}_j\) is not a \(k\)-cover of \(X\).
        In particular, by our conditions, \(\widetilde{\mathscr U}_j\) has a finite subcover, and,
        by \eqref{LambdaTheoremFiniteSubcover},
        \[X = \bigcup\tau\langle \mathscr U_k : k \leq j \rangle.\]
        So we can choose
        \[\tau\langle \mathscr U_j : j \leq n \rangle \in \left[ \mathscr U_n \right]_+^{<\aleph_0}\]
        arbitrarily.
    \end{enumerate}

    To see that \(\tau\) is winning, consider any sequence \(\langle \mathscr U_n : n \in \omega \rangle\)
    of open covers of \(X\).
    If there is an \(n \in \omega\) for which \(\mathscr U_n\) admits a finite subcover, then we see that
    \[X = \bigcup \tau \langle \mathscr U_j : j \leq n \rangle
    \subseteq \bigcup \left\{ \bigcup\tau\langle \mathscr U_j : j \leq k \rangle : k \in \omega \right\}.\]
    Otherwise,
    \[X = \bigcup \left\{ \bigcup\hat\tau\left\langle \widetilde{\mathscr U}_j : j \leq k \right\rangle : k \in \omega \right\}
    = \bigcup \left\{ \bigcup \tau\langle \mathscr U_j : j \leq k \rangle : k \in \omega \right\},\]
    finishing the proof.
\end{proof}
\begin{lemma} \label{lem:TwoMarkovMenger}
    For any space \(X\), the following are equivalent.
    \begin{enumerate}[label=(\roman*)]
        \item \label{TwoMarkovMenger}
        \(\mathrm{II} \underset{\mathrm{mark}}{\uparrow} \mathsf G_{\mathrm{fin}}(\mathcal O_X, \mathcal O_X)\).
        \item \label{TwoMarkovLambdaMenger}
        \(\mathrm{II} \underset{\mathrm{mark}}{\uparrow} \mathsf G_{\mathrm{fin}}(\Lambda_X, \Lambda_X)\).
        \item \label{TwoMarkovOmegaLambdaMenger}
        \(\mathrm{II} \underset{\mathrm{mark}}{\uparrow} \mathsf G_{\mathrm{fin}}(\Omega_X, \Lambda_X)\).
        \item \label{TwoMarkovOmegaLambdaMengerExtra}
        \(\mathrm{II} \underset{\mathrm{mark}}{\uparrow} \mathsf G_{\mathrm{fin}}(\Omega_X, \mathcal O_X)\).
        \item \label{TwoMarkovKLambdaMenger}
        \(\mathrm{II} \underset{\mathrm{mark}}{\uparrow} \mathsf G_{\mathrm{fin}}(\mathcal K_X, \Lambda_X)\).
        \item \label{TwoMarkovKOMenger}
        \(\mathrm{II} \underset{\mathrm{mark}}{\uparrow} \mathsf G_{\mathrm{fin}}(\mathcal K_X, \mathcal O_X)\).
    \end{enumerate}
\end{lemma}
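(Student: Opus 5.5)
The plan mirrors the structure of Lemma \ref{lem:TwoStrategic}, replacing full strategies by Markov strategies throughout. The implications \ref{TwoMarkovLambdaMenger}\(\implies\)\ref{TwoMarkovOmegaLambdaMenger}, \ref{TwoMarkovOmegaLambdaMenger}\(\implies\)\ref{TwoMarkovOmegaLambdaMengerExtra}, \ref{TwoMarkovOmegaLambdaMenger}\(\implies\)\ref{TwoMarkovKLambdaMenger}, \ref{TwoMarkovOmegaLambdaMengerExtra}\(\implies\)\ref{TwoMarkovKOMenger}, and \ref{TwoMarkovKLambdaMenger}\(\implies\)\ref{TwoMarkovKOMenger} are immediate from \(\mathcal K_X \subseteq \Omega_X \subseteq \Lambda_X\) and \(\Lambda_X \subseteq \mathcal O_X\). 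Two nontrivial implications then remain: \ref{TwoMarkovKOMenger}\(\implies\)\ref{TwoMarkovMenger} and \ref{TwoMarkovMenger}\(\implies\)\ref{TwoMarkovLambdaMenger}.

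\textbf{Step 1: \ref{TwoMarkovKOMenger}\(\implies\)\ref{TwoMarkovMenger}.} I would use the same finite-union translation as in the last part of the proof of Lemma \ref{lem:TwoStrategic}, but make it Markov. Given a winning Markov strategy \(\hat\tau\) in \(\mathsf G_{\mathrm{fin}}(\mathcal K_X,\mathcal O_X)\), define \(\tau(\mathscr U,n)\) as follows.
\begin{enumerate}
\item If \(\mathscr U\) has a finite subcover, let \(\tau(\mathscr U,n)\) be such a subcover.
\item Otherwise \(\widetilde{\mathscr U}\) is a \(k\)-cover by Lemma \ref{lem:CompactnessThing}, and we let \(\tau(\mathscr U,n)=\bigcup\mathscr G_{\mathscr U}[\hat\tau(\widetilde{\mathscr U},n)]\).
\end{enumerate}
This needs no memory, since the case split depends only on the current cover. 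If some cover in the run has a finite subcover, P2 covers \(X\) in that round. Otherwise the run simulates a run of \(\hat\tau\) and inherits its success.

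\textbf{Step 2: \ref{TwoMarkovMenger}\(\implies\)\ref{TwoMarkovLambdaMenger}.} The perfect-information proof used memory to subtract previously chosen sets, so a different device is needed. First, if \(X\) is Markov Menger then, by known results (Telgársky/Scheepers type), \(X\) is \(\sigma\)-compact; I would avoid relying on that and argue directly. Let \(\hat\tau\) be a winning Markov strategy for the Menger game, and fix a bijection \(\beta:\omega^2\to\omega\) as before with \((a_n,b_n)=\beta^{-1}(n)\). Given a large cover \(\mathscr U\) in round \(n\), its only usable information is \(n\) itself. I would define
\[\tau(\mathscr U,n)=\bigcup_{i\le n}\hat\tau(\mathscr U\setminus\mathscr E_i,\,b_n)\]
for suitable finite exclusion sets \(\mathscr E_i\), or, more simply, play \(\hat\tau(\mathscr U,b_n)\) "at level \(a_n\)". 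The aim is that each fixed \(a\) yields a full run of \(\hat\tau\) covering \(X\), so each point is covered infinitely often in index.

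The obstacle is that the same set may be chosen repeatedly across levels, so infinitely many indices need not mean infinitely many sets. This is exactly the issue behind Theorem \ref{thm:CountableNotMarkoveLambdaRoth}, and memory is unavailable to discard previously chosen sets.

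To handle this, I would exploit the finite-selection freedom. In round \(n\), let \(\tau(\mathscr U,n)\) be the union over \(k\le n\) of \(\hat\tau(\mathscr U^{(k)},n)\), where the covers \(\mathscr U^{(k)}\) are obtained by removing from \(\mathscr U\) the finitely many sets that \(\hat\tau\) would pick at earlier rounds on the current cover. This is computable from \(\mathscr U\) and \(n\) alone. The resulting cover is still an open cover because \(\mathscr U\) is large, so removing finitely many sets preserves covering.

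If the covers vary across rounds, distinctness must still be ensured. The fallback is to first show that a Markov Menger space is \(\sigma\)-compact, \(X=\bigcup_m K_m\) with the \(K_m\) compact and increasing. Then in round \(n\) P2 picks, from the large cover \(\mathscr U\), \(n+1\) disjoint finite families each covering \(K_n\). This works because removing finitely many members of a large cover leaves a cover, and compactness lets us iterate finitely. Each point then lies in at least \(n+1\) distinct chosen sets in round \(n\), giving a large cover. I expect establishing \(\sigma\)-compactness from \ref{TwoMarkovMenger}, and citing it correctly, to be the main step. It follows from the Markov strategy by taking \(K_n=\bigcap_{\mathscr U}\bigcup\hat\tau(\mathscr U,n)\), whose compactness and covering are argued as in the standard proof.
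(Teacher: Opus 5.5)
Your Step 1 is exactly the paper's proof of \ref{TwoMarkovKOMenger}$\implies$\ref{TwoMarkovMenger}. Your final ``fallback'' for \ref{TwoMarkovMenger}$\implies$\ref{TwoMarkovLambdaMenger} is also the paper's argument: write $X$ as a countable union of pieces and, in round $n$, select from the current large cover several pairwise disjoint finite subfamilies, each covering a piece. The paper indexes pieces and multiplicities through a bijection $\omega\to\omega^2$; you use increasing pieces and $n+1$ families. Either works. The exploratory attempts before the fallback can be discarded.

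There is, however, one false step: the claim that a Markov Menger space is $\sigma$-compact, i.e.\ that your sets $K_n=\bigcap_{\mathscr U}\bigcup\hat\tau(\mathscr U,n)$ are compact. The natural argument only shows that $K_n$ is \emph{relatively} compact: every member of $\mathcal O_X$ has a finite subfamily covering $K_n$. Getting compact sets from this means passing to closures, which needs regularity. The paper assumes no separation axioms, and that is why it works with $\sigma$-relatively compact spaces via \cite[Theorem 4.17]{CCHMengerRothbergerSurvey}.

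Without regularity your claim really fails. Put $L_0=\omega$, let $L_{n+1}$ be the set of free ultrafilters on $L_n$, and call $W\subseteq\bigcup_n L_n$ open when $W\cap L_n\in p$ for every $p\in W\cap L_{n+1}$. Compactness of the Stone space of $L_n$ shows each $L_n$ is relatively compact, so this uncountable $T_1$ space is Markov Menger. Yet one checks that all of its compact subsets are finite, so it is not $\sigma$-compact.

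Your iteration never actually uses compactness. It only needs that $\mathscr U\setminus\mathscr E$, for finite $\mathscr E\subseteq\mathscr U$, has a finite subfamily covering $K_n$. This holds by the very definition of $K_n$: $\mathscr U\setminus\mathscr E\in\mathcal O_X$ because $\mathscr U$ is large, and $K_n\subseteq\bigcup\hat\tau(\mathscr U\setminus\mathscr E,n)$. Replacing $K_n$ by $\bigcup_{m\le n}K_m$ gives the increasing sequence you want. With ``compact'' replaced by ``relatively compact'' throughout, your proof is correct. It is then slightly more self-contained than the paper's, since your $K_n$ prove directly the half of the $\sigma$-relative-compactness characterization that the paper imports.
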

\begin{proof}
    First, note that the implications
    \ref{TwoMarkovLambdaMenger}\(\implies\)\ref{TwoMarkovOmegaLambdaMenger},
    \ref{TwoMarkovOmegaLambdaMenger}\(\implies\)\ref{TwoMarkovOmegaLambdaMengerExtra},
    \ref{TwoMarkovOmegaLambdaMenger}\(\implies\)\ref{TwoMarkovKLambdaMenger},
    \ref{TwoMarkovOmegaLambdaMengerExtra}\(\implies\)\ref{TwoMarkovKOMenger},
    and \ref{TwoMarkovKLambdaMenger}\(\implies\)\ref{TwoMarkovKOMenger}
    are immediate.

    We prove that \ref{TwoMarkovKOMenger}\(\implies\)\ref{TwoMarkovMenger}.
    Let \(\hat\tau\) be a winning Markov strategy for P2 in \(\mathsf{G}_{\mathrm{fin}}(\mathcal K_X, \mathcal O_X)\) and,
    given any family \(\mathscr U\) of subsets of \(X\), let
    \[\widetilde{\mathscr U} = \left\{ \bigcup \mathscr F : \mathscr F \in \left[ \mathscr U \right]_+^{<\aleph_0} \right\}.\]
    Fix a choice \[\mathscr G_{\mathscr U} : \widetilde{\mathscr U} \to \left[ \mathscr U \right]_+^{<\aleph_0}\]
    such that, for each \(W \in \widetilde{\mathscr U}\),
    \[W = \bigcup \mathscr G_{\mathscr U}(W).\]
    For \(\mathscr U \in \mathcal O_X\) and \(n \in \omega\), define \(\tau\) according to the following.

    If there is some \(\mathscr V \in \left[ \mathscr U \right]_+^{<\aleph_0}\) such that
    \(X = \bigcup \mathscr V\), let \(\tau(\mathscr U, n) = \mathscr V\).
    Otherwise, \(\mathscr U\) has no finite subcover of \(X\).
    Then let \[\tau(\mathscr U, n) = \bigcup \mathscr G_{\mathscr U}\left[ \hat\tau\left( \widetilde{\mathscr U}, n\right)\right].\]

    To see that \(\tau\) is winning, consider any sequence \(\langle \mathscr U_n : n \in \omega \rangle\)
    of open covers of \(X\).
    If there is an \(n \in \omega\) for which \(\mathscr U_n\) admits a finite subcover, then we see that
    \[X = \bigcup \tau(\mathscr U_n, n ) \subseteq \bigcup \left\{ \bigcup \tau(\mathscr U_k, k) : k \in \omega \right\}.\]
    Otherwise,
    \[X = \bigcup \left\{ \bigcup\hat\tau\left( \widetilde{\mathscr U}_k, k \right) : k \in \omega \right\}
    = \bigcup \left\{ \bigcup \tau( \mathscr U_k , k ) : k \in \omega \right\}.\]

    To finish the proof, we show that
    \ref{TwoMarkovMenger}\(\implies\)\ref{TwoMarkovLambdaMenger}.
    By \cite[Theorem 4.17]{CCHMengerRothbergerSurvey}, \(X\) is Markov Menger if and only if \(X\) is
    \(\sigma\)-relatively compact.
    So we prove here that every \(\sigma\)-relatively compact space is Markov \(\lambda\)-Menger.

    For a space \(X\), a relatively compact \(A \subseteq X\), a large cover \(\mathscr U\) of \(X\), and
    an integer \(n \in \omega\), we define a choice function as follows:
    choose
    \[\widetilde{\mathscr F}(\mathscr U, A, n) \in \left[ \mathscr U \setminus
    \bigcup \{ \widetilde{\mathscr F}(\mathscr U, A, j) : j < n \} \right]_+^{<\aleph_0}\]
    with the property that
    \[A \subseteq \bigcup \widetilde{\mathscr F}(\mathscr U, A, n).\]
    Then set
    \[\mathscr F(\mathscr U, A, n) = \bigcup \left\{ \widetilde{\mathscr F}(\mathscr U, A, j) : j \leq n \right\}.\]
    Note that, for any \(x \in A\), there are at least \(n\)-many elements of \(\mathscr F(\mathscr U, A, n)\)
    which contain \(x\).
    
    Now suppose \(X\) is \(\sigma\)-relatively compact and let \(\{ A_n : n \in \omega \}\)
    be relatively compact subsets of \(X\) with \(X = \bigcup \{ A_n : n \in \omega \}\).
    Also fix a bijection \(\beta : \omega \to \omega^2\) and write \(\beta(n) = ( \beta(n)_1, \beta(n)_2 )\).
    Define \(\tau : \Lambda_X \times \omega \to \left[ \mathscr T_X \right]_+^{<\aleph_0}\) by
    \[\tau(\mathscr U,n) = \mathscr F(\mathscr U, A_{\beta(n)_1}, \beta(n)_2).\]
    To see that this is a winning Markov strategy for P2 in \(\mathsf G_{\mathrm{fin}}(\Lambda_X, \Lambda_X)\),
    let \(\langle \mathscr U_n : n \in \omega \rangle\) be a sequence of large covers of \(X\).
    Then consider an arbitrary \(x \in X\) and \(m \in \omega\).
    Note that there must be some \(n \in \omega\) such that
    \(x \in A_{\beta(n)_1}\) and \(m < \beta(n)_2\).
    It follows that there are at least \(\beta(n)_2 > m\) distinct elements of
    \(\tau(\mathscr U_n, n)\) which contain \(x\).
    Since \(m \in \omega\) was arbitrary, we see that there must be infinitely many members of
    \(\bigcup \{ \tau(\mathscr U_n, n ) : n \in \omega \}\) which contain \(x\),
    finishing the proof.
\end{proof}

\begin{theorem} \label{thm:LambdaTypeMengerRelations}
    For any space \(X\),
    \begin{align*}
        \mathsf G_{\mathrm{fin}}(\mathcal O_X, \mathcal O_X)
        &\leftrightarrows \mathsf G_{\mathrm{fin}}(\Lambda_X, \Lambda_X)
        \leftrightarrows \mathsf G_{\mathrm{fin}}(\Omega_X, \Lambda_X)
        \leftrightarrows \mathsf G_{\mathrm{fin}}(\Omega_X, \mathcal O_X)\\
        &\leftrightarrows \mathsf G_{\mathrm{fin}}(\mathcal K_X, \Lambda_X)
        \leftrightarrows \mathsf G_{\mathrm{fin}}(\mathcal K_X, \mathcal O_X).
    \end{align*}
\end{theorem}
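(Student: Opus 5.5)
The plan is to unpack \(\leftrightarrows\) into its constituent implications and observe that each strategic level has already been handled by one of the preceding lemmas. By \cite[Definition 3.8]{CCHMengerRothbergerSurvey}, \(\mathsf G \leq_{\mathrm{II}}^+ \mathsf H\) asks for five transfers from \(\mathsf G\) to \(\mathsf H\):
\begin{enumerate}
    \item P2 having a winning (perfect-information) strategy;
    \item P2 having a winning Markov strategy;
    \item P1 failing to have a winning strategy;
    \item P1 failing to have a winning predetermined strategy;
    \item P1 failing to have a winning constant strategy.
\end{enumerate}
Since \(\leftrightarrows\) is symmetric and the chain in the statement lists six games, it suffices to show that, at each of these five levels, the six games are pairwise equivalent. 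Transitivity then gives every \(\leftrightarrows\) in the displayed chain, including the ones between non-adjacent games.

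For the P2 levels, the perfect-information level is exactly Lemma \ref{lem:TwoStrategic} and the Markov level is exactly Lemma \ref{lem:TwoMarkovMenger}. The P1 perfect-information level is the equivalence of items \ref{equivMengerStrategic} through \ref{equivStratKOpen} in Lemma \ref{lem:MengerEquivalence}.

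For the predetermined level, I will use the standard fact that P1 has a winning predetermined strategy in \(\mathsf G_{\mathrm{fin}}(\mathcal A, \mathcal B)\) if and only if \(\mathsf S_{\mathrm{fin}}(\mathcal A, \mathcal B)\) fails. Indeed, a predetermined strategy is just a sequence \(\langle A_n : n \in \omega\rangle \in \mathcal A^\omega\), and it is winning exactly when no sequence of finite selections from it lands in \(\mathcal B\). With that identification, items \ref{equivMenger} through \ref{equivKOpen} of Lemma \ref{lem:MengerEquivalence} give the equivalence of the six games at this level.

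For the constant level, a constant strategy for P1 is a single \(\mathscr U \in \mathcal A\) played every round. It fails to be winning if and only if there are finite nonempty \(\mathscr F_n \subseteq \mathscr U\) with \(\bigcup_n \mathscr F_n \in \mathcal B\). I would check that this is equivalent to \(\binom{\mathcal A}{\mathcal B}\). Given \(\varphi : \omega \to \mathscr U\) with range in \(\mathcal B\), take \(\mathscr F_n = \{\varphi(n)\}\). Conversely, given the \(\mathscr F_n\), enumerate the countable nonempty set \(\bigcup_n \mathscr F_n\), with repetitions allowed, by some \(\varphi : \omega \to \mathscr U\). This equivalence is the only mildly delicate point, and it is routine. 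Once it is in place, Lemma \ref{lem:BasicLambdaLindelof} (Lindel\"{o}f \(\iff\) \(\lambda\)-Lindel\"{o}f \(\iff \binom{\Omega}{\Lambda} \iff \binom{\Omega}{\mathcal O} \iff \binom{\mathcal K}{\Lambda} \iff \binom{\mathcal K}{\mathcal O}\)) gives the constant-level equivalence, since \(\binom{\mathcal O}{\mathcal O}\) is the Lindel\"{o}f property. Assembling the five levels completes the proof, with no genuine obstacle beyond this bookkeeping.
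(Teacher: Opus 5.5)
Your proposal is correct and matches the paper's proof, which is stated as exactly this combination: Lemma \ref{lem:BasicLambdaLindelof} handles the constant level, Lemma \ref{lem:MengerEquivalence} the predetermined and perfect-information levels for P1, and Lemmas \ref{lem:TwoStrategic} and \ref{lem:TwoMarkovMenger} the perfect-information and Markov levels for P2. Your only addition is to spell out the routine identifications the paper leaves implicit: failure of a predetermined P1 strategy with \(\mathsf S_{\mathrm{fin}}(\mathcal A,\mathcal B)\), and failure of a constant P1 strategy with \(\binom{\mathcal A}{\mathcal B}\).
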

\begin{proof}
    This is a combination of Lemmas \ref{lem:BasicLambdaLindelof}, \ref{lem:MengerEquivalence}, \ref{lem:TwoStrategic},
    and \ref{lem:TwoMarkovMenger}.
\end{proof}

\subsection{The Case for Single-Selections}

The primary objective of this section is to prove Theorem \ref{thm:SingleLambdaOpen}.
As mentioned at the beginning of Section \ref{subsec:FiniteSelections},
we cannot include \(k\)-covers in Theorem \ref{thm:SingleLambdaOpen} in the way they
appear in Theorem \ref{thm:LambdaTypeMengerRelations}.
As a particular example, note that \(\mathbb R\) fails to be Rothberger, but does
satisfy \(\mathsf S_1(\mathcal K, \mathcal O)\) since it is \(\sigma\)-compact.
Hence, we cannot include \(X \models \mathsf S_1(\mathcal K, \mathcal O)\)
as an equivalent condition in Lemma \ref{lem:RothbergerEquivalence}.

The following extends \cite[Theorem 3]{ScheepersPartitionRelations}
and \cite[Theorem 17]{COOC1}.
\begin{lemma} \label{lem:RothbergerEquivalence}
    For any space \(X\), the following are equivalent.
    \begin{enumerate}[label=(\roman*)]
        \item \label{equivRoth}
        \(X\) is Rothberger.
        \item \label{equivLambdaRoth}
        \(X\) is \(\lambda\)-Rothberger.
        \item \label{equivRothOmegaLambda}
        \(X \models \mathsf S_1(\Omega, \Lambda)\).
        \item \label{equivRothOmegaOpen}
        \(X \models \mathsf S_1(\Omega, \mathcal O)\).
        \item \label{equivRothStrategic}
        \(\mathrm{I} \not\uparrow \mathsf G_1(\mathcal O_X, \mathcal O_X)\).
        \item \label{equivStrategicLambdaRoth}
        \(\mathrm{I} \not\uparrow \mathsf G_1(\Lambda_X, \Lambda_X)\).
        \item \label{equivStrategicOmegaLambda}
        \(\mathrm{I} \not\uparrow \mathsf G_1(\Omega_X, \Lambda_X)\).
        \item \label{equivStrategicOmegaOpen}
        \(\mathrm{I} \not\uparrow \mathsf G_1(\Omega_X, \mathcal O_X)\).
    \end{enumerate}
\end{lemma}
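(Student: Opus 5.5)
The plan mirrors the proof of Lemma \ref{lem:MengerEquivalence}, with finite selections replaced by single selections. Several implications are immediate, as before:
\ref{equivLambdaRoth}\(\implies\)\ref{equivRothOmegaLambda}\(\implies\)\ref{equivRothOmegaOpen},
\ref{equivStrategicLambdaRoth}\(\implies\)\ref{equivStrategicOmegaLambda}\(\implies\)\ref{equivStrategicOmegaOpen},
and each strategic statement implies the corresponding selection principle.
The equivalence \ref{equivRoth}\(\iff\)\ref{equivRothStrategic} is Pawlikowski's theorem.
It therefore suffices to prove three further implications:
\ref{equivRothOmegaOpen}\(\implies\)\ref{equivRoth},
\ref{equivStrategicOmegaOpen}\(\implies\)\ref{equivRothStrategic} (or simply route it through \ref{equivRothOmegaOpen}), and
\ref{equivRoth}\(\implies\)\ref{equivStrategicLambdaRoth}.

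For \ref{equivRothOmegaOpen}\(\implies\)\ref{equivRoth}, I would use the classical partition trick of \cite[Theorem 17]{COOC1}. Suppose \(\langle \mathscr U_n : n\in\omega\rangle\) witnesses that \(X\) is not Rothberger. We may assume no \(\mathscr U_n\) has a finite subcover, since otherwise \(X\) is not even Lindel\"of-type nontrivial in the needed way; more carefully, if every sequence fails then so does every tail. Partition \(\omega\) into infinitely many infinite sets \(P_k\). For \(n\in P_k\), let \(\mathscr W_n\) consist of the unions \(U_{i_1}\cup\cdots\cup U_{i_j}\) with \(i_1<\cdots<i_j\) the first \(j\) elements of \(P_k\) and \(U_{i_\ell}\in\mathscr U_{i_\ell}\). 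This yields \(\omega\)-covers: it is nontrivial because no finite subcover exists, and it is an \(\omega\)-cover because each \(P_k\) is infinite. A single selection from each \(\mathscr W_n\) that covers \(X\) would produce, after unfolding the unions, single selections from \(\mathscr U_i\) covering \(X\). This is the standard argument and needs only bookkeeping. Since \(\mathsf S_1\) is witnessed by P1's constant strategy, the same construction gives \ref{equivStrategicOmegaOpen}\(\implies\)\ref{equivRothOmegaOpen}\(\implies\)\ref{equivRoth}, which closes the loop.

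The main step is \ref{equivRoth}\(\implies\)\ref{equivStrategicLambdaRoth}, and here I would copy the \(X\times\omega\) argument from Lemma \ref{lem:MengerEquivalence}. A Rothberger space is Lindel\"of, hence \(\lambda\)-Lindel\"of by Lemma \ref{lem:BasicLambdaLindelof}, so we fix injective enumerations \(\gamma(\mathscr U)\) of countable large subcovers. Also, \(X\times\omega\) is Rothberger as a countable union of Rothberger spaces. Given a strategy \(\sigma\) for P1 in \(\mathsf G_1(\Lambda_X,\Lambda_X)\), define \(\widetilde\sigma\) in \(\mathsf G_1(\mathcal O_{X\times\omega},\mathcal O_{X\times\omega})\) by
\[\widetilde\sigma\langle W_j : j<k\rangle=\{\gamma(\sigma\langle U_j:j<k\rangle)_m\times\{n\} : m\ge n,\ \gamma(\sigma\langle U_j:j<k\rangle)_m\notin\{U_j:j<k\}\},\]
where \(U_j\) is the first coordinate of \(W_j\). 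This is an open cover because the excluded set is finite and the enumerated family is large.

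By Pawlikowski's theorem, some play \(\langle W_k\rangle\) against \(\widetilde\sigma\) covers \(X\times\omega\). The corresponding \(U_k\) are legal single selections against \(\sigma\), and they are pairwise distinct because of the exclusion clause. Largeness then follows exactly as before. Fix \(x\) and \(k\), and choose \(p\) larger than every level \(n\) used in \(W_0,\dots,W_k\). The point \((x,p)\) is covered by some \(W_m\); its level is \(p\), so \(m>k\), and \(x\in U_m\). Distinctness of the \(U_j\) then gives infinitely many distinct members of \(\{U_j\}\) containing \(x\).

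I expect the only delicate point to be verifying that the exclusion clause keeps \(\widetilde\sigma\) a legal cover while forcing distinctness. The transfer from single selections in \(X\times\omega\) to single selections in \(X\) works because each \(W_k\) determines one \(U_k\), so I do not expect a real obstacle there.
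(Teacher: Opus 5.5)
Your decomposition is the same as the paper's. You use the immediate implications, Pawlikowski's theorem for (i)\(\iff\)(v), and the partition argument of \cite[Theorem 17]{COOC1} for (iv)\(\implies\)(i). For (i)\(\implies\)(vi) you use the single-selection version of the \(X\times\omega\) argument of Lemma~\ref{lem:MengerEquivalence}, with Pawlikowski replacing Hurewicz. Your explicit treatment of (i)\(\implies\)(vi) is correct. The exclusion clause keeps \(\widetilde\sigma\) a legal cover of \(X\times\omega\), since only finitely many indices are removed from an injective enumeration of a large cover. It also makes the \(U_k\) pairwise distinct, so the level argument yields infinitely many distinct members containing each \(x\).

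The defects are confined to your sketch of (iv)\(\implies\)(i), which the paper only cites.

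First, there should be one \(\omega\)-cover \(\mathscr W_k\) per block \(P_k\), and \(\mathsf S_1(\Omega,\mathcal O)\) should be applied to \(\langle \mathscr W_k : k\in\omega\rangle\). As written, \(\mathscr W_n\) is the same family for every \(n\in P_k\). The sets selected at different \(n\in P_k\) then unfold into possibly different choices from the same \(\mathscr U_i\), \(i\in P_k\), so no single selection results. With one cover per block, the unfoldings use pairwise disjoint index sets, and the unused indices can be filled arbitrarily.

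Second, the absence of finite subcovers of the individual \(\mathscr U_n\) does not make \(\mathscr W_k\) nontrivial, because members of \(\mathscr W_k\) combine sets from different covers. Your reduction to that case is also not justified: passing to a tail does not help when infinitely many \(\mathscr U_n\) have finite subcovers. Nontriviality is in fact automatic. If some member of \(\mathscr W_k\) were \(X\), its constituents together with arbitrary choices elsewhere would be a single selection covering \(X\), contradicting the choice of \(\langle \mathscr U_n : n\in\omega\rangle\).

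Finally, the failure of \(\mathsf S_1(\Omega,\mathcal O)\) gives P1 a winning \emph{predetermined} strategy, not a constant one; that is what makes (viii)\(\implies\)(iv) immediate.
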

\begin{proof}
    The equivalence of \ref{equivRoth} and \ref{equivRothStrategic} is the well-known theorem of Pawlikowski.
    Note that the implications
    \ref{equivLambdaRoth}\(\implies\)\ref{equivRothOmegaLambda},
    \ref{equivRothOmegaLambda}\(\implies\)\ref{equivRothOmegaOpen},
    \ref{equivStrategicLambdaRoth}\(\implies\)\ref{equivStrategicOmegaLambda},
    \ref{equivStrategicOmegaLambda}\(\implies\)\ref{equivStrategicOmegaOpen},
    \ref{equivStrategicLambdaRoth}\(\implies\)\ref{equivLambdaRoth},
    \ref{equivStrategicOmegaLambda}\(\implies\)\ref{equivRothOmegaLambda},
    and \ref{equivStrategicOmegaOpen}\(\implies\)\ref{equivRothOmegaOpen}
    are immediate.

    The implication \ref{equivRothOmegaOpen}\(\implies\)\ref{equivRoth}
    can be proved in exactly the same way as it is proved in \cite[Theorem 17]{COOC1},
    without needing the stated hypothesis there that \(X\) is a set of real numbers.

    Finally, the proof is complete by noting that \ref{equivRoth}\(\implies\)\ref{equivStrategicLambdaRoth}
    follows from the proof of \ref{equivMenger}\(\implies\)\ref{equivStrategic} in
    Lemma \ref{lem:MengerEquivalence} by observing that, whenever \(\mathscr F_j\) is a singleton,
    \(\mathscr G_j\) is a singleton, and by replacing the use of the Hurewicz Theorem with the use
    of the Pawlikowski Theorem.
\end{proof}

We will now turn our attention to the second player's perspective and unravel
the differences in the single-selection context as compared to the finite-selection context.
In particular, Theorem \ref{thm:CountableNotMarkoveLambdaRoth} lies in direct opposition to
the equivalence reflected in Theorem \ref{thm:LambdaTypeMengerRelations} for finite-selection games.

The proof of the following is quite similar to the proof of Lemma \ref{lem:TwoStrategic},
except for, notably, the implication \ref{TwoSingleStratOmegaLambdaMenger}\(\implies\)\ref{TwoSingleStratMenger},
which passes through dual games in this setting as compared to the more direct approach
in the finite-selections setting.
\begin{lemma} \label{lem:TwoSingleStrategic}
    For any space \(X\), the following are equivalent.
    \begin{enumerate}[label=(\roman*)]
        \item \label{TwoSingleStratMenger}
        \(\mathrm{II} \uparrow \mathsf G_1(\mathcal O_X, \mathcal O_X)\).
        \item \label{TwoSingleStratLambdaMenger}
        \(\mathrm{II} \uparrow \mathsf G_1(\Lambda_X, \Lambda_X)\).
        \item \label{TwoSingleStratOmegaLambdaMenger}
        \(\mathrm{II} \uparrow \mathsf G_1(\Omega_X, \Lambda_X)\).
        \item \label{TwoSingleStratOmegaLambdaMengerExtra}
        \(\mathrm{II} \uparrow \mathsf G_1(\Omega_X, \mathcal O_X)\).
    \end{enumerate}
\end{lemma}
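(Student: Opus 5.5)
Following the statement's hint, the plan mirrors the proof of Lemma \ref{lem:TwoStrategic}, except that \ref{TwoSingleStratOmegaLambdaMengerExtra}\(\implies\)\ref{TwoSingleStratMenger} goes through the dual games.

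\textbf{Immediate implications.} The implications \ref{TwoSingleStratLambdaMenger}\(\implies\)\ref{TwoSingleStratOmegaLambdaMenger}\(\implies\)\ref{TwoSingleStratOmegaLambdaMengerExtra} follow from \(\Omega_X \subseteq \Lambda_X \subseteq \mathcal O_X\). The first is because P1's moves become more restricted; the second because P2's winning condition becomes weaker.

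\textbf{From \ref{TwoSingleStratMenger} to \ref{TwoSingleStratLambdaMenger}.} I will repeat the diagonalization from the proof of Lemma \ref{lem:TwoStrategic} with singletons in place of finite sets. Fix a bijection \(\beta : \omega^2 \to \omega\) that is increasing in the second coordinate, and write \(\beta^{-1}(n) = (a_n,b_n)\). Let \(\hat\tau\) be a winning strategy for P2 in \(\mathsf G_1(\mathcal O_X,\mathcal O_X)\). Given large covers \(\langle \mathscr U_j : j \le k\rangle\), set
\[\mathscr V_k = \mathscr U_k \setminus \left\{ \hat\tau\langle \mathscr V_{\beta(a_j,t)} : t \le m\rangle : m \le b_j,\ j<k \right\}.\]
Removing finitely many sets from a large cover still leaves an open cover, so \(\mathscr V_k \in \mathcal O_X\). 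Define
\[\tau\langle \mathscr U_j : j\le k\rangle = \hat\tau\langle \mathscr V_{\beta(a_k,t)} : t\le b_k\rangle.\]
Each column \(\{\beta(p,t) : t\in\omega\}\) is then a play of \(\hat\tau\) against open covers, so it covers \(X\). Given \(x\) and \(n\), choose a column \(p\) starting beyond \(n\); it contains some \(W \ni x\) chosen at a stage \(m>n\). By construction \(W\) avoids all earlier selections, hence is new. This shows \(x\) lies in infinitely many distinct selected sets.

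\textbf{From \ref{TwoSingleStratOmegaLambdaMengerExtra} to \ref{TwoSingleStratMenger}.} Taking finite unions is unavailable here, since a single pick from \(\widetilde{\mathscr U}\) cannot be unpacked into a single element of \(\mathscr U\). So I will use duality. By \cite[Corollary 31]{ClontzDualSelection}, \(\mathrm{II}\uparrow\mathsf G_1(\Omega_X,\mathcal O_X)\) is equivalent to \(\mathrm{I}\) having a winning strategy in \(\mathsf G_1(\mathcal N[X_{\mathrm{fin}}],\neg\mathcal O_X)\), the finite-open game. By \cite[Corollary 29]{ClontzDualSelection}, \ref{TwoSingleStratMenger} is equivalent to \(\mathrm I\uparrow \mathsf G_1(\mathcal N[X],\neg\mathcal O_X)\), the point-open game.

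It therefore suffices to convert a winning strategy \(\sigma\) for P1 in the finite-open game into one in the point-open game. I will spread each finite set over several rounds. When \(\sigma\) plays \(F=\{x_1,\dots,x_r\}\), P1 in the point-open game plays \(x_1,\dots,x_r\) on successive rounds and receives open sets \(U_1,\dots,U_r\). P1 then feeds \(U_1\cup\dots\cup U_r\) back to \(\sigma\) as P2's answer. If this union is all of \(X\), P1 has already won, since the \(U_i\) cover \(X\). Otherwise the union is a legal answer in \(\mathscr T_X\) containing \(F\). The union of all sets P2 produces in the simulated finite-open game equals the union of the point-open responses. Since \(\sigma\) wins, that union is \(X\), so P1 wins the point-open game.

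The main obstacle I expect is bookkeeping. In the \(\lambda\) diagonalization, I must verify that each \(\mathscr V_k\) is a genuine open cover and that the new \(W\) is distinct from the finitely many earlier picks. In the duality step, I must handle the trivial case where the union equals \(X\).
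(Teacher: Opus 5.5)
Your proposal is correct and matches the paper's proof: the same \(\beta\)-diagonalization that deletes earlier selections for (i)\(\implies\)(ii), the trivial inclusions for (ii)\(\implies\)(iii)\(\implies\)(iv), and duality for (iv)\(\implies\)(i). The only difference is that you write out the unraveling of a winning finite-open strategy for P1 into a winning point-open strategy, including the case where the union of the responses is already \(X\), whereas the paper simply cites this standard argument.
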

\begin{proof}
    We start by showing that \ref{TwoSingleStratMenger}\(\implies\)\ref{TwoSingleStratLambdaMenger},
    which follows the proof of the analogous implication in Lemma \ref{lem:TwoStrategic}.
    Fix a bijection \(\beta : \omega^2 \to \omega\) such that, for each \(n \in \omega\),
    \(\langle \beta(n,m) : m \in \omega \rangle\) is increasing.
    Also, for \(n \in \omega\), let \(( a_n, b_n ) = \beta^{-1}(n)\).
    Suppose \(\hat\tau\) is a winning strategy for P2 in \(\mathsf G_1(\mathcal O_X, \mathcal O_X)\).
    We define a strategy \(\tau\) for P2 in \(\mathsf G_1(\Lambda_X, \Lambda_X)\) as follows.
    For \(k \in \omega\), suppose we have defined \(\tau\) for every sequence
    \(\langle \mathscr U_j : j < k \rangle\) of large covers of \(X\).
    Additionally suppose that, for each \(j \leq k\) and each sequence
    \(\langle \mathscr U_\ell : \ell < j \rangle\) of large covers of \(X\),
    we have defined an auxiliary sequence \(\langle \mathscr V_\ell : \ell < j \rangle\)
    of open covers of \(X\).
    Then, given a sequence \(\langle \mathscr U_j : j \leq k \rangle\) of large covers of \(X\),
    let
    \[\mathscr V_k = \mathscr U_k \setminus \left\{ \hat\tau\langle \mathscr V_{\beta(a_j,t)} : t \leq m \rangle : j < k \wedge m \leq b_j \right\}\]
    and define
    \[\tau\langle \mathscr U_j : j \leq k \rangle = \hat\tau\langle \mathscr V_{\beta(a_k,t)} :t \leq b_k \rangle.\]
    
    Now, we show that \(\tau\) is winning.
    Suppose \(\langle \mathscr U_n : n \in \omega \rangle\) is a sequence of large covers of \(X\)
    and that \(\langle \mathscr V_n : n \in \omega \rangle\) follows the auxiliary sequence
    construction defined above.
    Then consider \(x \in X\) and suppose \(n \in \omega\) is arbitrary.
    Let \[\mathscr F = \{ \tau \langle \mathscr U_\ell
    : \ell \leq j \rangle : x \in \tau \langle \mathscr U_\ell : \ell \leq j \rangle \wedge j \leq n \}.\]
    We can then let \(p \in \omega\) be such that \(n < \beta(a_p,0)\).
    Then there is some \(k \in \omega\)
    such that
    \[x \in \hat\tau\langle \mathscr V_{\beta(a_p,j)} : j \leq k \rangle \in \mathscr V_{\beta(a_p,k)}.\]
    Note that
    \[\beta(a_n, b_n) = n < \beta(a_p,0) \leq \beta(a_p,k) =: m = \beta(a_m, b_m)\]
    and, thus, that
    \[x \in \hat\tau\langle \mathscr V_{\beta(a_m,t)} : t \leq b_m \rangle = \tau\langle \mathscr U_j : j \leq m \rangle.\]
    Thus,
    \[\hat\tau\langle \mathscr V_{\beta(a_p,j)} : j \leq k \rangle \in \mathscr V_m = \mathscr U_m \setminus 
    \left\{ \hat\tau\langle \mathscr V_{\beta(a_j,t)} : t \leq \ell \rangle : j < m \wedge \ell \leq b_j \right\}.\]
    Note that, in particular, for each \(j \leq n\),
    \[\tau\langle \mathscr U_\ell : \ell \leq j \rangle
    = \hat\tau \langle \mathscr V_{\beta(a_j, t)} : t \leq b_j \rangle\]
    and \(\beta(a_j, b_j) = j \leq n < m\).
    Hence,
    \begin{align*}
    \mathscr F \subseteq \{\tau\langle \mathscr U_\ell : \ell \leq j \rangle : j \leq n \}
    &= \{\hat\tau \langle \mathscr V_{\beta(a_j, t)} : t \leq b_j \rangle : j \leq n \}\\
    &\subseteq \bigcup_{j \leq n} \left\{ \hat\tau \langle \mathscr V_{\beta(a_j, t)} : t \leq \ell \rangle : \ell \leq b_j \right\}\\
    &\subseteq \left\{ \hat\tau \langle \mathscr V_{\beta(a_j, t)} : t \leq \ell \rangle : j < m \wedge \ell \leq b_j \right\}.
    \end{align*}
    It follows that \(\tau\langle \mathscr U_j : j \leq m \rangle \not\in \mathscr F\).
    Therefore, since \(n \in \omega\) was arbitrary, we see that
    \[\{ \tau \langle \mathscr U_j : j \leq n \rangle : n \in \omega \}\]
    is a large cover of \(X\).

    The implications \ref{TwoSingleStratLambdaMenger}\(\implies\)\ref{TwoSingleStratOmegaLambdaMenger}
    and \ref{TwoSingleStratOmegaLambdaMenger}\(\implies\)\ref{TwoSingleStratOmegaLambdaMengerExtra} are immediate.
    So we finish by proving \ref{TwoSingleStratOmegaLambdaMengerExtra}\(\implies\)\ref{TwoSingleStratMenger}.
    First note that
    \[\mathrm{II} \uparrow \mathsf G_1(\Omega_X, \mathcal O_X)
    \iff \mathrm{I} \uparrow \mathsf G_1(\mathcal N[X_{\mathrm{fin}}], \neg \mathcal O_X)\]
    by duality.
    Then, by the standard unraveling argument (see \cite[Corollary 4.3]{Telgarsky1975} and \cite[Lemma 4.1]{CCDimension}),
    \[\mathrm{I} \uparrow \mathsf G_1(\mathcal N[X_{\mathrm{fin}}], \neg \mathcal O_X)
    \iff \mathrm{I} \uparrow \mathsf G_1(\mathcal N[X], \neg \mathcal O_X).\]
    Again, by duality,
    \[\mathrm{I} \uparrow \mathsf G_1(\mathcal N[X], \neg \mathcal O_X)
    \iff \mathrm{II} \uparrow \mathsf G_1(\mathcal O_X, \mathcal O_X),\]
    and, thus, we have that
    \[\mathrm{II} \uparrow \mathsf G_1(\Omega_X, \mathcal O_X)
    \implies \mathrm{II} \uparrow \mathsf G_1(\mathcal O_X, \mathcal O_X),\]
    finishing the proof.
\end{proof}

\begin{proposition} \label{prop:MarkLambdaRothbergerIsTopCount}
    Let \(X\) be a space.
    If P2 has a winning Markov strategy in \(\mathsf G_1(\Omega_X, \mathcal O_X)\),
    then \(X\) is topologically countable.
    Consequently, if \(X\) is Markov \(\lambda\)-Rothberger,
    then \(X\) is topologically countable.
\end{proposition}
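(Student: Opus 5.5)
The plan is to extract, for each round \(n\), a finite "critical" set \(F_n \subseteq X\) from the Markov strategy, and then to show that the countable set \(A = \bigcup_n F_n\) witnesses topological countability via Lemma \ref{lem:SillyNhoodThing}. Let \(\tau : \Omega_X \times \omega \to \mathscr T_X\) be a winning Markov strategy for P2 in \(\mathsf G_1(\Omega_X, \mathcal O_X)\), so that \(\tau(\mathscr U, n) \in \mathscr U\).

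\textbf{Step 1: the finite set \(F_n\).} Fix \(n\). I claim there is a finite nonempty \(F_n \subseteq X\) with the following property: whenever \(V \in \mathcal N_{F_n}\) and \(y \notin V\), there is some \(\mathscr U \in \Omega_X\) with \(y \notin \tau(\mathscr U, n)\). I would argue by contradiction. Suppose that for every \(F \in [X]_+^{<\aleph_0}\) there are \(V_F \in \mathcal N_F\) and \(y_F \notin V_F\) such that \(y_F \in \tau(\mathscr U, n)\) for every \(\mathscr U \in \Omega_X\). Set \(\mathscr W = \{ V_F : F \in [X]_+^{<\aleph_0} \}\).
\begin{itemize}
\item Each \(V_F\) is a proper nonempty open set containing \(F\), so \(\mathscr W \subseteq \mathscr T_X\) and \(\mathscr W\) is an \(\omega\)-cover of \(X\).
\item Since \(\tau(\mathscr W, n) \in \mathscr W\), we have \(\tau(\mathscr W, n) = V_G\) for some finite \(G\).
\item The defining property of \(y_G\) then gives \(y_G \in \tau(\mathscr W, n) = V_G\), contradicting \(y_G \notin V_G\).
\end{itemize}
This diagonal step is the key point, and it is where the Markov (round-only) nature of \(\tau\) is used. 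A perfect-information strategy would not let us pit a single cover \(\mathscr W\) against all the \(V_F\) at once.

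\textbf{Step 2: \(A\) is a witness.} Let \(A = \bigcup_{n \in \omega} F_n\), which is countable and nonempty. I show \(X = \bigcap \mathcal N_A^+\); by Lemma \ref{lem:SillyNhoodThing} this equals \(\bigcup\{ \bigcap \mathcal N_x^+ : x \in A\}\), so \(X\) is topologically countable. Suppose instead that some \(y \notin \bigcap \mathcal N_A^+\), and pick \(V \in \mathcal N_A\) with \(y \notin V\). Then \(V \in \mathcal N_{F_n}\) for every \(n\). By Step 1, for each \(n\) choose \(\mathscr U_n \in \Omega_X\) with \(y \notin \tau(\mathscr U_n, n)\). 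If P1 plays \(\langle \mathscr U_n : n \in \omega \rangle\), then \(\{\tau(\mathscr U_n, n) : n \in \omega\}\) misses \(y\), so it is not a cover. This contradicts \(\tau\) being winning.

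If \(\Omega_X = \varnothing\), the argument still goes through formally, since Step 1's contradiction produces an \(\omega\)-cover. Alternatively, Proposition \ref{prop:SillyNoOmegaThing} directly gives that \(X\) is topologically finite, hence topologically countable.

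\textbf{Step 3: the consequence.} If \(X\) is Markov \(\lambda\)-Rothberger, a winning Markov strategy for P2 in \(\mathsf G_1(\Lambda_X, \Lambda_X)\) restricts to \(\Omega_X \subseteq \Lambda_X\). Its outcomes lie in \(\Lambda_X \subseteq \mathcal O_X\), so it is a winning Markov strategy in \(\mathsf G_1(\Omega_X, \mathcal O_X)\), and the first part applies.
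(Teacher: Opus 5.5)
Your proof is correct and follows essentially the paper's route. The paper cites Lemma 4.4 of the Menger/Rothberger survey to obtain finite \(F_n\) with \(\mathcal N_{F_n} \subseteq \{ \tau(\mathscr U, n) : \mathscr U \in \Omega_X \}\); your Step 1 instead reproves inline, by the same diagonal \(\omega\)-cover argument, only the consequence \(\bigcap\{\tau(\mathscr U,n) : \mathscr U \in \Omega_X\} \subseteq \bigcap \mathcal N_{F_n}\) that is actually needed. The rest matches the paper: taking \(A = \bigcup_n F_n\), applying Lemma \ref{lem:SillyNhoodThing}, and restricting a Markov \(\lambda\)-Rothberger strategy to \(\Omega_X \subseteq \Lambda_X\).
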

\begin{proof}
    If \(\Omega_X = \varnothing\), then \(X\) is topologically finite by
    Proposition \ref{prop:SillyNoOmegaThing}.
    So we suppose that \(\Omega_X \neq \varnothing\) and
    consider \(\tau\), a (not necessarily winning)
    Markov strategy for P2 in \(\mathsf G_1(\Omega_X, \mathcal O_X)\).
    Then, for each \(n \in\omega\), by \cite[Lemma 4.4]{CCHMengerRothbergerSurvey},
    we can let
    \(F_n \in [X]_+^{<\aleph_0}\) be such that
    \[\mathcal N_{F_n} \subseteq \{ \tau(\mathscr U, n) : \mathscr U \in \Omega_X \}.\]
    Consequently, with an application of Lemma \ref{lem:SillyNhoodThing}, we have that
    \[E_n := \bigcap \{ \tau(\mathscr U, n) : \mathscr U \in \Omega_X \} \subseteq \bigcap \mathcal N_{F_n}
    = \bigcup \left\{ \bigcap \mathcal N_x : x \in F_n \right\}.\]

    Now, by way of contrapositive, suppose there is some \(y \not\in \bigcup\{E_n:n\in\omega\}\).
    Then, for \(n\in\omega\), there must be some \(\mathscr U_n \in \Omega_X\) such that
    \(y \not\in \tau(\mathscr U_n,n)\).
    Hence, \(\langle \mathscr U_n : n \in \omega\rangle\) is a sequence of \(\omega\)-covers of \(X\)
    that beats \(\tau\).

    For the additional claim, note that
    \[
    \mathrm{II} \underset{\mathrm{mark}}{\uparrow} \mathsf G_1(\Lambda_X, \Lambda_X)
    \implies \mathrm{II} \underset{\mathrm{mark}}{\uparrow} \mathsf G_1(\Omega_X, \Lambda_X)
    \implies \mathrm{II} \underset{\mathrm{mark}}{\uparrow} \mathsf G_1(\Omega_X, \mathcal O_X),
    \]
    completing the proof.
\end{proof}
Now, unlike the contexts of \(\mathsf G_1(\Omega_X, \Omega_X)\) and \(\mathsf G_1(\mathcal O_X, \mathcal O_X)\),
not every topologically countable space is Markov \(\lambda\)-Rothberger.
\begin{theorem} \label{thm:CountableNotMarkoveLambdaRoth}
    If \(X\) is a countably infinite \(T_1\) space with an uncountable topology,
    then \(X\) is not Markov \(\lambda\)-Rothberger.
\end{theorem}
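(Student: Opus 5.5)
The plan is to show directly that every Markov strategy \(\tau\) for P2 in \(\mathsf G_1(\Lambda_X, \Lambda_X)\) is defeated. That is, we produce large covers \(\langle \mathscr U_n : n \in \omega \rangle\) and a point \(x \in X\) that lies in only finitely many of the \emph{distinct} sets \(\tau(\mathscr U_n, n)\). The mechanism is the one described in the Motivating Commentary: a Markov player cannot remember what it has already played. So if P1 offers covers in which the only sets containing \(x\) that are attractive to \(\tau\) come from a fixed finite family, then \(\tau\) will keep repeating sets instead of producing infinitely many distinct neighborhoods of \(x\).

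\textbf{Step 1: choose the point \(x\).} Since \(X\) is countable and every nonempty open set contains a point, \(\mathscr T_X = \bigcup_{x \in X} \mathcal N_x\). As \(\mathscr T_X\) is uncountable, some \(x\) has \(\mathcal N_x\) uncountable; fix such an \(x\). Since \(X\) is \(T_1\), every set \(W \setminus F\) with \(W\) open and \(F\) finite is again open. This lets us build many large covers from cofinite pieces.

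\textbf{Step 2: a family of test covers.} Fix an enumeration \(X \setminus \{x\} = \{ y_k : k \in \omega \}\). For each \(W \in \mathcal N_x\), form a large cover \(\mathscr U_W\) whose members containing \(x\) are built from \(W\) only, for instance sets of the form \(W \setminus \{y_k\}\). To this we add a fixed countable family of open sets missing \(x\), such as \(X \setminus \{x, y_k\}\), which covers every \(y \neq x\) infinitely often. We must check that each \(\mathscr U_W\) is a large cover. The point \(x\) lies in the infinitely many sets \(W \setminus \{y_k\}\); any other point is covered infinitely often by the \(x\)-free part.

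\textbf{Step 3: pigeonhole on responses.} For each \(n\), either \(\tau(\mathscr U_W, n) \not\ni x\), or \(\tau(\mathscr U_W, n) = W \setminus \{y_k\}\) for some \(k\), which determines \(W\) up to one point. We then use uncountability of \(\mathcal N_x\) against the countably many rounds and countably many points \(y_k\). The aim is to find, for each round \(n\), a choice \(W_n\) such that the sets \(\tau(\mathscr U_{W_n}, n)\) containing \(x\) range over a finite family. Alternatively we aim for the following: if \(\tau\) is forced to answer with distinct neighborhoods of \(x\) in every round, the answers determine a countable set of \(W\)'s that must capture all of \(\mathcal N_x\), which is a contradiction. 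The intended shape is a diagonal/counting argument: only countably many sets can ever be \emph{forced} as answers across all rounds, so some uncountable subfamily of \(\mathcal N_x\) can be played in every round while \(\tau\) returns only finitely many distinct sets containing \(x\).

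\textbf{Main obstacle.} Step 3 is the step I expect to be hardest. A Markov strategy may respond with sets that depend wildly on \(W\), so a naive pigeonhole does not immediately yield repetition. If the counting argument fails as stated, the fallback is a different move for P1: play a single fixed large cover \(\mathscr U_W\) with \(W\) chosen so that all of its members containing \(x\) are subsets of one another. The cofinite modifications of \(W\) then collapse after taking intersections, and a finite subfamily must recur. Once repetition of the \(x\)-containing responses is secured, \(\{ \tau(\mathscr U_n, n) : n \in \omega \}\) fails to be a large cover at \(x\), so \(\tau\) is not winning.
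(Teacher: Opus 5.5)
\textbf{There is a genuine gap.} Step~3 is where the whole proof has to happen, and you do not supply it. Worse, with the test covers of Step~2 it cannot be supplied.

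Work in \(X=\omega\) with the discrete topology, a legitimate instance of the theorem. The set \(W\) is recoverable from \(\mathscr U_W\) as the union of its members containing \(x\), so a Markov strategy may be defined as a function of \(W\). Here is a strategy that beats every sequence of your test covers.
\begin{itemize}
\item Enumerate as \(\langle (S_n,\Phi_n):n\in\omega\rangle\) all pairs with \(S_n\subseteq X\setminus\{x\}\) finite and \(\Phi_n\subseteq\wp(S_n)\) such that every \(A\subseteq S_n\) has \(A\in\Phi_n\) or \(A\setminus\{y\}\in\Phi_n\) for some \(y\in A\).
\item Put \(\tau(\mathscr U_W,2n)=X\setminus\{x,y_n\}\).
\item Let \(\tau(\mathscr U_W,2n+1)\) be \(W\) if \(W\cap S_n\in\Phi_n\), and otherwise \(W\setminus\{y\}\) for some \(y\in W\cap S_n\) with \((W\cap S_n)\setminus\{y\}\in\Phi_n\). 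Define \(\tau\) arbitrarily on other large covers.
\end{itemize}
Then \(\tau(\mathscr U_W,2n+1)\cap S_n\in\Phi_n\) for every \(W\), and the even rounds cover each point other than \(x\) infinitely often.

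Suppose that, against some sequence of your test covers, the odd-round answers (all of which contain \(x\)) formed a finite family \(\{A_1,\dots,A_m\}\). Choose a finite \(S\) with \(|A_i\cap S|\ge m\) for each \(i\), and let \(\Phi=\wp(S)\setminus\{A_i\cap S:i\le m\}\). Each \(A_i\cap S\) has at least \(m\) one-point deletions, all different from it, while there are at most \(m-1\) other patterns. So \((S,\Phi)\) is admissible and equals some \((S_n,\Phi_n)\). The answer at round \(2n+1\) is then an \(A_i\) with \(A_i\cap S_n\notin\Phi_n\), a contradiction. Hence no sequence of your test covers defeats this \(\tau\).

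Your fallback fails even more directly. Since \(X\) is countable, hence \(\lambda\)-Lindel\"{o}f, a Markov P2 answers a repeated large cover by injectively enumerating a countable large subfamily of it. Largeness counts distinct members, so whether the members are nested, and what their intersections are, is irrelevant.

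\textbf{The missing idea.} Let P1 use covers whose \(x\)-containing part is an \emph{arbitrary} infinite subfamily of \(\mathcal N_x\). Then count in the opposite direction from your Step~3: what is small is the set of neighborhoods of \(x\) that \(\tau\) can \emph{avoid}.

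Let \(n\) be a round at which every answer \(\tau(\mathscr U,n)\), \(\mathscr U\in\Lambda_X\), contains \(x\). Then \(\mathcal E_n=\{\tau(\mathscr U,n):\mathscr U\in\Lambda_X\}\) is cofinite in \(\mathcal N_x\). Otherwise \((\mathcal N_x\setminus\mathcal E_n)\cup\{X\setminus\{x,y\}:y\in X\setminus\{x\}\}\) would be a large cover whose answer at round \(n\) must lie both in \(\mathcal E_n\) and in \(\mathcal N_x\setminus\mathcal E_n\).

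Since \(\mathcal N_x\) is uncountable, some \(U\) lies in every such \(\mathcal E_n\). P1 then forces the answer \(U\) at those rounds and an answer missing \(x\) at all other rounds. So \(x\) belongs to only one chosen set, and \(\tau\) loses. This is exactly the paper's argument.
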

\begin{proof}
    Assuming that \(X\) is countable with an uncountable topology,
    there must be some point \(p \in X\) such that
    \(\mathcal N_p\) is uncountable.

    Now consider any function \(\tau : \Lambda_X \times \omega \to \mathscr T_X\)
    with the property that \(\tau(\mathscr U , n) \in \mathscr U\) for every \(\mathscr U \in \Lambda_X\)
    and \(n\in\omega\).
    For \(n \in \omega\), define \(A_n = \bigcap \{ \tau(\mathscr U, n) : \mathscr U \in \Lambda_X \}\).

    If \(M_p := \{ n \in \omega : p \in A_n \}\) is finite, for each \(n \in \omega \setminus M_p\),
    choose \(\mathscr U_n \in \Lambda_X\) for which \(p \not\in \tau(\mathscr U_n, n)\).
    For \(n \in M_p\), let \(\mathscr U_n \in \Lambda_X\) be arbitrary and note that
    \(\{ \tau(\mathscr U_n , n ) : n \in \omega \}\) fails to be a large cover since
    \(p\) is only covered by those \(\tau(\mathscr U_n, n)\) with \(n \in M_p\), which is finite.
    In this case, \(\tau\) is already seen to not be a winning strategy for P2 in
    \(\mathsf G_1(\Lambda_X, \Lambda_X)\).

    So we assume that \(M_p\) is infinite.
    For each \(n \in M_p\), consider
    \[\mathcal E_n = \{ \tau(\mathscr U, n) : \mathscr U \in \Lambda_X\}.\]
    Note that \(p \in \bigcap \mathcal E_n = A_n\) for \(n \in M_p\).
    We claim that \(\mathcal E_n \subseteq \mathcal N_p\) is cofinite in \(\mathcal N_p\).
    If it were the case that \(\mathcal N_p = \mathcal E_n\), then there would be clearly nothing to show.
    Otherwise, \(\mathcal N_p \setminus \mathcal E_n \neq \varnothing\).
    Then, for \(x \in X \setminus \{p\}\), let
    \[\mathcal F_x = \{ X \setminus \{p,y\} : y \in X \setminus \{x\} \}.\]
    Note that \(\mathcal F_x\) is an infinite family of open subsets of \(X\)
    that contain \(x\) and that \(p \not\in \bigcup \mathcal F_x\).
    We can then define
    \[\mathscr U = (\mathcal N_p \setminus \mathcal E_n) \cup \bigcup \{ \mathcal F_x : x \in X \setminus \{p\} \} \in \mathcal O_X.\]
    Since, for any \(\mathscr V \in \Lambda_X\), \(p \in \tau(\mathscr V, n) \in \mathcal E_n\) and,
    for any \(U \in \mathscr U\) with \(p \in U\), \(U \not\in \mathcal E_n\),
    we see that \(\mathscr U \not\in \Lambda_X\).
    Consequently, since every \(x \in X \setminus \{ p \}\) is covered infinitely often and
    \(\{ U \in \mathscr U : p \in U \} = \mathcal N_p \setminus \mathcal E_n\),
    we must have that \(\mathcal N_p \setminus \mathcal E_n\) is finite.

    Observe that \(\bigcup \{ \mathcal N_p \setminus \mathcal E_n : n \in M_p \}\) is countable and that
    \(\mathcal N_p\) is uncountable.
    So we can choose some
    \[U_p \in \mathcal N_p \setminus \bigcup \{ \mathcal N_p \setminus \mathcal E_n : n \in M_p \} = \bigcap \{ \mathcal E_n : n \in M_p \}.\]
    For each \(n \in M_p\), let \(\mathscr U_n \in \Lambda_X\) be such that \(U_p = \tau(\mathscr U_n, n)\).
    For \(n \in \omega \setminus M_p\), let \(\mathscr U_n \in \Lambda_X\) be such that
    \(p \not\in \tau(\mathscr U_n, n)\).
    Note then that
    \[\mathscr V := \{ \tau(\mathscr U_n, n) : n \in \omega \}\] fails to be a large cover of \(X\)
    since \(\{ V \in \mathscr V : p \in V \} = \{ U_p \}\).
    Therefore, \(\tau\) is not a winning strategy for P2 in
    \(\mathsf G_1(\Lambda_X, \Lambda_X)\).
\end{proof}
\begin{corollary} \label{cor:NontrivialSpacesAreNotLambdaRoth}
    If \(X\) is a countably infinite Hausdorff space, then \(X\) is not Markov
    \(\lambda\)-Rothberger.
\end{corollary}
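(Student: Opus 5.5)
The plan is to reduce the corollary directly to Theorem \ref{thm:CountableNotMarkoveLambdaRoth}. A Hausdorff space is in particular \(T_1\), so it suffices to show that a countably infinite Hausdorff space \(X\) has an uncountable topology. Once that is established, the theorem applies verbatim and yields that \(X\) is not Markov \(\lambda\)-Rothberger.

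To see that the topology is uncountable, I will first extract an infinite family of pairwise disjoint nonempty open sets. This is a standard fact for infinite Hausdorff spaces, and I will build the family recursively. If \(X\) has an isolated point \(x\) whose complement is infinite, then \(\{x\}\) and \(X \setminus \{x\}\) are both open. Otherwise, one uses Hausdorffness at two points to split \(X\) into two disjoint open sets \(U, V\), together with the observation that the closure of one of them, say \(\overline{U}\), has infinite complement. More cleanly, I will follow the usual argument. Pick distinct \(x,y\) with disjoint open neighborhoods \(U_0 \ni x\) and \(V_0 \ni y\). At least one of \(X \setminus \overline{U_0}\) and \(X\setminus \overline{V_0}\) is infinite, since \(V_0\subseteq X\setminus\overline{U_0}\) and \(U_0\subseteq X\setminus\overline{V_0}\), and their union contains the infinite set \(X\setminus(\overline{U_0}\cap\overline{V_0})\), which is cofinite-or-infinite. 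I then keep one open set \(W_0\) and recurse inside an infinite open set \(Y_1\) disjoint from \(W_0\), which is again Hausdorff. If at some stage the remaining infinite open set cannot be split in this way, then case analysis on whether the open subspace is infinite handles it, and this is the one place needing care. This produces pairwise disjoint nonempty open sets \(W_0, W_1, \dots\).

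Given such \(\{W_n : n\in\omega\}\), the map \(S \mapsto \bigcup_{n\in S} W_n\) from \(\wp(\omega)\) into the topology is injective by disjointness and nonemptiness. Hence the topology has cardinality at least \(2^{\aleph_0}\). Excluding \(\varnothing\) and \(X\), the collection \(\mathscr T_X\) is still uncountable.

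The main obstacle is the recursive construction of the disjoint open family. I expect to handle it by always maintaining an infinite open set \(Y_n\) and choosing, via Hausdorffness applied to two points of \(Y_n\), a nonempty open \(W_n\subseteq Y_n\) such that \(Y_{n+1}=Y_n\setminus\overline{W_n}\) is infinite. The key check is that of the two separating neighborhoods, at least one has a closure whose complement in \(Y_n\) is infinite. Here I must argue carefully, since \(Y_n\setminus(\overline{U}\cap\overline{V})\) need not obviously be infinite. If it is finite, I will instead take the set whose removal leaves the infinite side. Failing that, I will fall back to the alternative route: a countable \(T_1\) space with only countably many open sets is a countable union of \(G_\delta\)-type restrictions, and one shows directly that Hausdorffness forces infinitely many disjoint open sets.
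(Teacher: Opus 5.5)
Your reduction is the paper's: Hausdorff implies \(T_1\), an infinite family \(\{W_n : n\in\omega\}\) of pairwise disjoint nonempty open sets gives an injection \(S\mapsto\bigcup_{n\in S}W_n\) of \(\wp(\omega)\) into the topology, and Theorem \ref{thm:CountableNotMarkoveLambdaRoth} then applies. The paper simply takes the existence of such a disjoint family as a standard fact. So the only problem is in the extra argument you give for that fact, and there the key step is left unjustified. Your claim that \(X\setminus(\overline{U_0}\cap\overline{V_0})\) is infinite is true, but ``which is cofinite-or-infinite'' gives no reason for it. Your fallback (``if it is finite, take the set whose removal leaves the infinite side'') is incoherent: \(Y_n\setminus(\overline U\cap\overline V)=(Y_n\setminus\overline U)\cup(Y_n\setminus\overline V)\), so if it were finite neither side would be infinite. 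The ``\(G_\delta\)-type restrictions'' route is not an argument at all. As written, you flag the crucial check yourself and never settle it.

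The missing observation is that \(T_1\) makes finite sets closed. Let \(Y\) be an infinite open set, pick distinct points of \(Y\), and take disjoint nonempty open \(U,V\subseteq Y\) separating them. Since \(U\) is open and disjoint from \(V\), we have \(U\cap\overline V=\varnothing\). If \(U\) is finite, then \(\overline U=U\), so \(Y\setminus\overline U=Y\setminus U\) is infinite; take \(W=U\). If \(U\) is infinite, then \(U\subseteq Y\setminus\overline V\), so that set is infinite; take \(W=V\). In either case \(Y'=Y\setminus\overline W\) is an infinite open set disjoint from the nonempty open set \(W\). Iterating from \(Y_0=X\) gives nonempty open sets \(W_n\), and they are pairwise disjoint because \(W_n\subseteq Y_n\subseteq Y_{m+1}\subseteq X\setminus\overline{W_m}\) whenever \(m<n\). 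With this inserted, your proof is complete and agrees with the paper's.
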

\begin{proof}
    By Theorem \ref{thm:CountableNotMarkoveLambdaRoth} we need only show that \(X\)
    has an uncountable topology.
    So, let \(\{ V_n : n \in \omega \}\) be a set of pairwise disjoint open subsets of \(X\)
    and note that the mapping \(2^\omega \to \mathscr T_X \cup \{ \varnothing, X \}\) defined by
    \[\langle \alpha_n : n \in \omega \rangle \mapsto \bigcup \{ V_n : \alpha_n = 1 \}\]
    is an injection.
    Hence, \(X\) has an uncountable topology and we are done.
\end{proof}
\begin{example}
    The countable discrete space \(\omega\) is not Markov \(\lambda\)-Rothberger.
\end{example}
Despite this, there are still examples of infinite spaces which are Markov \(\lambda\)-Rothberger.
\begin{proposition} \label{prop:VerySmallMeansLambdaRoth}
    If \(X\) is a topologically countable space with a countable topology,
    then \(X\) is Markov \(\lambda\)-Rothberger.
\end{proposition}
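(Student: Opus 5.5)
The plan is to exploit the two countability hypotheses separately. We use countability of the topology to give every open set a numerical index, and topological countability to reduce the large-cover requirement to countably many points. Fix an enumeration \(\mathscr T_X = \{ W_k : k \in \omega \}\). Fix a countable set \(A = \{ a_i : i \in \omega \}\) (repetitions allowed if \(A\) is finite) such that \(X = \bigcup \{ \bigcap \mathcal N_{a_i}^+ : i \in \omega \}\). If \(\Lambda_X = \varnothing\) there is nothing to prove, so we assume large covers exist.

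The first step is a reduction. Suppose \(\mathscr V \subseteq \mathscr T_X\) has the property that \(\{ V \in \mathscr V : a_i \in V \}\) is infinite for each \(i \in \omega\). Then \(\mathscr V\) is a large cover of \(X\). To see this, let \(x \in X\) and choose \(i\) with \(x \in \bigcap \mathcal N_{a_i}^+\). Every \(V \in \mathscr V\) containing \(a_i\) lies in \(\mathcal N_{a_i}\), and hence contains \(x\). So \(\{ V \in \mathscr V : a_i \in V \} \subseteq \{ V \in \mathscr V : x \in V\}\), and the latter set is infinite. It therefore suffices for P2 to guarantee that each \(a_i\) is covered by infinitely many distinct chosen sets.

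Next we define the Markov strategy. Fix a bijection \(\beta : \omega \to \omega^2\), writing \(\beta(n) = (\beta(n)_1, \beta(n)_2)\). Given \(\mathscr U \in \Lambda_X\) and \(n \in \omega\), set \(i = \beta(n)_1\) and \(k = \beta(n)_2\). Define \(\tau(\mathscr U, n) = W_m\), where \(m\) is the least index with \(m \geq k\), \(W_m \in \mathscr U\), and \(a_i \in W_m\). Such an \(m\) exists because \(\mathscr U\) is a large cover: infinitely many members of \(\mathscr U\) contain \(a_i\). Only finitely many of these have index below \(k\), since the enumeration is injective on sets.

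Finally we verify that \(\tau\) is winning. Let \(\langle \mathscr U_n : n \in \omega \rangle\) be a sequence of large covers and fix \(i \in \omega\). For each \(k\), the round \(n = \beta^{-1}(i,k)\) produces a set \(W_m\) with \(a_i \in W_m\) and \(m \geq k\). As \(k\) ranges over \(\omega\), the indices of the chosen sets containing \(a_i\) are unbounded. Hence infinitely many distinct members of \(\{ \tau(\mathscr U_n, n) : n \in \omega \}\) contain \(a_i\), and by the reduction step this set is a large cover.

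The main point requiring care is precisely the distinctness issue highlighted in Theorem \ref{thm:CountableNotMarkoveLambdaRoth}. A memoryless P2 cannot remember which sets were already played, and this is overcome by indexing sets through the countable enumeration and forcing the indices upward with the parameter \(k\). This is exactly where the countable topology is essential.
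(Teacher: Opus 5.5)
Your proof is correct and is essentially the paper's argument: a faithful enumeration of the topology (available because \(\Lambda_X \neq \varnothing\) forces \(\mathscr T_X\) to be infinite), a pairing of \(\omega\) with \(\omega^2\) so that each round targets one of the points \(a_i\) together with a lower bound \(k\) on the index of the chosen set, and the observation that any open set containing \(a_i\) contains all of \(\bigcap \mathcal N_{a_i}^+\). The only differences are cosmetic: you make the choice canonical (least admissible index) and spell out the reduction to the points \(a_i\), which the paper leaves implicit.
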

\begin{proof}
    First, note that, if either \(\mathscr T_X\) is finite or \(\Lambda_X = \varnothing\),
    then \(X\) is vacuously Markov \(\lambda\)-Rothberger.
    So assume that \(\mathscr T_X\) is countably infinite and that \(\Lambda_X \neq \varnothing\).
    Then let \(\{ x_n : n \in \omega \} \subseteq X\) be such that
    \(X = \bigcup \left\{ \bigcap \mathcal N_{x_n} : n \in \omega \right\}\).
    Then fix bijections \(\varphi : \omega \to \mathscr T_X\) and \(\beta : \omega^2 \to \omega\).
    Without loss of generality, we can assume that, for each \(n \in \omega\),
    \(\langle \beta(n,k) : k \in \omega \rangle\) is strictly increasing.
    We also write \(( a_n, b_n ) = \beta^{-1}(n)\).
    We define \(\tau : \Lambda_X \times \omega \to \mathscr T_X\) to be a choice
    \(\tau(\mathscr U, n) \in \mathscr U\) such that
    \(x_{a_n} \in \tau(\mathscr U, n)\) and \(\varphi^{-1} \circ \tau(\mathscr U, n) \geq b_n\).
    Now, suppose \(\langle \mathscr U_n : n \in \omega \rangle\) is a sequence of large covers
    of \(X\) and consider \(x \in X\).
    Let \(m \in \omega\) be such that \(x \in \bigcap \mathcal N_{x_m}\).
    Then, for every \(k \in \omega\), note that
    \[x \in \bigcap \mathcal N_{x_m} \subseteq \tau(\mathscr U_{\beta(m,k)} , \beta(m,k)).\]
    Since \(\varphi^{-1} \circ \tau(\mathscr U_{\beta(m,k)} , \beta(m,k)) \geq k\), we see that
    \[\langle \tau(\mathscr U_{\beta(m,k)} , \beta(m,k)) : k \in \omega \rangle\]
    is unbounded in \(\langle \varphi(n) : n \in \omega \rangle\).
    Hence, \(x\) must be contained in infinitely many members of
    \(\{ \tau(\mathscr U_n, n ) : n \in \omega \}\).
\end{proof}
\begin{corollary} \label{cor:MarkovLambdaRothberger}
    If \(X\) is a \(T_1\) space, then \(X\) is Markov \(\lambda\)-Rothberger if and only if
    \(X\) has a countable topology.
\end{corollary}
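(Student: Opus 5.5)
The plan is to prove the two directions separately, using the preceding results as the main tools.

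For the forward direction, suppose \(X\) is a \(T_1\) space that is Markov \(\lambda\)-Rothberger.
By Proposition \ref{prop:MarkLambdaRothbergerIsTopCount}, \(X\) is topologically countable.
In a \(T_1\) space, \(\bigcap \mathcal N_x^+ = \{x\}\) whenever \(X\) has at least two points, since for \(y \neq x\) the set \(X \setminus \{y\}\) belongs to \(\mathcal N_x\).
Hence topological countability of \(X\) means that \(X\) is countable.
If \(X\) is finite, then the \(T_1\) topology is discrete on finitely many points and therefore finite, hence countable.
The one-point space is trivial.
If \(X\) is countably infinite, we argue by contraposition.
Were the topology uncountable, Theorem \ref{thm:CountableNotMarkoveLambdaRoth} would say that \(X\) is not Markov \(\lambda\)-Rothberger.
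So the topology must be countable.

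For the reverse direction, suppose \(X\) is \(T_1\) with a countable topology.
Since \(T_1\) makes \(x \mapsto X \setminus \{x\}\) injective into \(\mathscr T_X\) (for \(|X| \geq 2\)), a countable topology forces \(X\) to be countable.
As noted above, \(\bigcap \mathcal N_x^+ = \{x\}\), so \(X = \bigcup\{\bigcap \mathcal N_x^+ : x \in X\}\) witnesses that \(X\) is topologically countable.
Proposition \ref{prop:VerySmallMeansLambdaRoth} then gives that \(X\) is Markov \(\lambda\)-Rothberger.

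The main point of care is the degenerate cases, namely the finite spaces and the one-point space.
In these cases \(\Lambda_X = \varnothing\), so the game is vacuous and both sides of the equivalence hold trivially.
I would also check that the definition of topological countability with \(\mathcal N_x^+\) agrees with the \(\bigcap \mathcal N_{x}\) form used in Proposition \ref{prop:VerySmallMeansLambdaRoth}.
Otherwise the argument is a direct assembly of the earlier results.
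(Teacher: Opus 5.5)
Your proposal is correct and follows the paper's proof essentially step for step. One direction uses the injection \(x \mapsto X \setminus \{x\}\) together with Proposition~\ref{prop:VerySmallMeansLambdaRoth}; the other uses Proposition~\ref{prop:MarkLambdaRothbergerIsTopCount}, the \(T_1\) collapse \(\bigcap \mathcal N_x^+ = \{x\}\), and Theorem~\ref{thm:CountableNotMarkoveLambdaRoth}, and your explicit handling of the finite case (where that theorem does not apply but the topology is trivially finite) fills in a point the paper leaves implicit.
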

\begin{proof}
    Suppose \(X\) has a countable topology and let \(\{ U_n : n \in \omega \}\) be a faithful enumeration
    of \(\mathscr T_X\).
    Note that, for each \(x \in X\), there exists some \(n(x) \in \omega\) such that \(X \setminus \{x\} = U_{n(x)}\).
    This mapping \(x \to n(x)\), \(X \mapsto \omega\), is an injection, so \(X\) is countable.
    Hence, by Proposition \ref{prop:VerySmallMeansLambdaRoth}, \(X\) is Markov \(\lambda\)-Rothberger.

    On the other hand, if \(X\) is Markov \(\lambda\)-Rothberger, Proposition \ref{prop:MarkLambdaRothbergerIsTopCount}
    asserts that \(X\) is topologically countable.
    Since \(X\) is \(T_1\), \(\mathcal N_x = \{x\}\) for each \(x \in X\).
    Hence, \(X\) is countable.
    Thus, by Theorem \ref{thm:CountableNotMarkoveLambdaRoth}, \(X\) must have a countable topology.
\end{proof}
\begin{question}
    Can the \(T_1\) condition in Corollary \ref{cor:MarkovLambdaRothberger} be dropped, or is there
    an example of a non-\(T_1\) topologically countable space with an uncountable topology that is
    Markov \(\lambda\)-Rothberger?
\end{question}
\begin{example}
    The space \(X = \omega\) with the cofinite topology is an infinite \(T_1\) space
    which is Markov \(\lambda\)-Rothberger since it is countable and has a countable topology.
\end{example}
\begin{lemma} \label{lem:MarkovRothbergerThing}
    For any space \(X\),
    \[\mathrm{II} \underset{\mathrm{mark}}{\uparrow} \mathsf G_1(\mathcal O_X, \mathcal O_X)
    \iff \mathrm{II} \underset{\mathrm{mark}}{\uparrow} \mathsf G_1(\Omega_X, \Lambda_X)
    \iff \mathrm{II} \underset{\mathrm{mark}}{\uparrow} \mathsf G_1(\Omega_X, \mathcal O_X).\]
\end{lemma}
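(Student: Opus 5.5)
The plan is to route everything through topological countability and run the following cycle of implications:
\[\mathrm{II} \underset{\mathrm{mark}}{\uparrow} \mathsf G_1(\mathcal O_X, \mathcal O_X)
\implies \mathrm{II} \underset{\mathrm{mark}}{\uparrow} \mathsf G_1(\Omega_X, \mathcal O_X)
\implies X \text{ is topologically countable}
\implies \mathrm{II} \underset{\mathrm{mark}}{\uparrow} \mathsf G_1(\Omega_X, \Lambda_X)
\implies \mathrm{II} \underset{\mathrm{mark}}{\uparrow} \mathsf G_1(\Omega_X, \mathcal O_X)
\implies \mathrm{II} \underset{\mathrm{mark}}{\uparrow} \mathsf G_1(\mathcal O_X, \mathcal O_X).\]

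\textbf{The easy links.} The first implication is immediate, since \(\Omega_X \subseteq \mathcal O_X\). Restricting a winning Markov strategy for the Rothberger game to \(\omega\)-covers wins \(\mathsf G_1(\Omega_X, \mathcal O_X)\). The second implication is exactly Proposition \ref{prop:MarkLambdaRothbergerIsTopCount}. The fourth is immediate from \(\Lambda_X \subseteq \mathcal O_X\).

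\textbf{Topologically countable implies Markov Rothberger.} Fix \(\{x_n : n \in \omega\}\) with \(X = \bigcup\{\bigcap \mathcal N_{x_n}^+ : n\in\omega\}\). Let \(\tau(\mathscr U, n) \in \mathscr U\) be any member containing \(x_n\). If \(x \in \bigcap \mathcal N_{x_m}^+\), then \(\tau(\mathscr U_m, m)\) is an open set containing \(x_m\), so it contains \(x\). Hence the selections cover \(X\).

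\textbf{The main step: topologically countable implies Markov \(\mathsf G_1(\Omega_X, \Lambda_X)\).} If \(\Omega_X = \varnothing\) there is nothing to show, so assume otherwise and fix \(\{x_n\}\) as above. The Markov strategy is to let \(\tau(\mathscr U, n)\) be a member of the \(\omega\)-cover \(\mathscr U\) containing the finite set \(\{x_j : j \le n\}\).

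Take \(x \in \bigcap \mathcal N_{x_m}^+\). For every \(n \ge m\), the set \(\tau(\mathscr U_n, n)\) is open and contains \(x_m\), so it contains \(x\). The obstacle is that these sets might repeat, so we must show that infinitely many of them are distinct.

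First, every proper open \(V\) misses some \(x_j\). Otherwise \(V\) would contain every \(x_j\), hence contain \(\bigcap \mathcal N_{x_j}^+\) for every \(j\), and so \(V = X\); this is impossible because \(\mathscr T_X\) consists of proper subsets.

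Now suppose that only finitely many distinct sets \(V_1, \dots, V_k\) occur among \(\{\tau(\mathscr U_n, n) : n \ge m\}\). Choose indices \(j_i\) with \(x_{j_i} \notin V_i\). For \(n \ge \max\{m, j_1, \dots, j_k\}\), the set \(\tau(\mathscr U_n, n)\) contains every \(x_{j_i}\), so it equals no \(V_i\). This contradicts the assumption.

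Therefore \(x\) lies in infinitely many distinct selected sets, and the selections form a large cover. This closes the cycle and proves the lemma.
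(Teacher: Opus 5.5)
Your proof is correct. It differs from the paper's mainly in being self-contained.

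The paper obtains the hard direction by citing Theorem 4.17 of the authors' Menger/Rothberger survey. That result gives \(\mathrm{II} \underset{\mathrm{mark}}{\uparrow} \mathsf G_1(\mathcal O_X, \mathcal O_X) \iff \mathrm{II} \underset{\mathrm{mark}}{\uparrow} \mathsf G_1(\Omega_X, \Omega_X)\), and the paper then just invokes \(\Omega_X \subseteq \Lambda_X\). For the return direction, the paper combines Proposition \ref{prop:MarkLambdaRothbergerIsTopCount} with the same cited result to get from topological countability back to Markov Rothberger.

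You instead write down explicit Markov strategies from a witness \(\{x_n : n \in \omega\}\) of topological countability. You handle the only real difficulty, repetition among the selected sets, by observing that every proper open set omits some \(x_j\). This is exactly the mechanism behind the inclusion \(\Omega_X \subseteq \Lambda_X\), specialized to your strategy.

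Your cumulative strategy, choosing a member of \(\mathscr U\) containing \(\{x_j : j \leq n\}\), actually gives more. Every finite \(F \subseteq X\) is contained in all but finitely many of the selections. So the same strategy also wins \(\mathsf G_1(\Omega_X, \Omega_X)\), which reproves the part of the cited theorem that the paper needs.

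The paper's route is shorter given its prior work. Yours makes transparent why the second player succeeds here, in contrast with Theorem \ref{thm:CountableNotMarkoveLambdaRoth}. When the first player must play \(\omega\)-covers, the second player can demand ever larger finite pieces of the witness set, and this forces distinctness. Against mere large covers, nothing comparable is available.
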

\begin{proof}
    Note that the implication
    \[\mathrm{II} \underset{\mathrm{mark}}{\uparrow} \mathsf G_1(\Omega_X, \Lambda_X)
    \implies \mathrm{II} \underset{\mathrm{mark}}{\uparrow} \mathsf G_1(\Omega_X, \mathcal O_X)
    \implies \mathrm{II} \underset{\mathrm{mark}}{\uparrow} \mathsf G_1(\mathcal O_X, \mathcal O_X)\]
    follows from Proposition \ref{prop:MarkLambdaRothbergerIsTopCount} and
    \cite[Theorem 4.17]{CCHMengerRothbergerSurvey}.
    So we need only show that
    \[\mathrm{II} \underset{\mathrm{mark}}{\uparrow} \mathsf G_1(\mathcal O_X, \mathcal O_X)
    \implies \mathrm{II} \underset{\mathrm{mark}}{\uparrow} \mathsf G_1(\Omega_X, \Lambda_X).\]
    Indeed, by \cite[Theorem 4.17]{CCHMengerRothbergerSurvey},
    \[\mathrm{II} \underset{\mathrm{mark}}{\uparrow} \mathsf G_1(\mathcal O_X, \mathcal O_X)
    \iff \mathrm{II} \underset{\mathrm{mark}}{\uparrow} \mathsf G_1(\Omega_X, \Omega_X).\]
    Consequently, as \(\Omega_X \subseteq \Lambda_X\), we see that
    \[\mathrm{II} \underset{\mathrm{mark}}{\uparrow} \mathsf G_1(\mathcal O_X, \mathcal O_X)
    \iff \mathrm{II} \underset{\mathrm{mark}}{\uparrow} \mathsf G_1(\Omega_X, \Omega_X)
    \implies \mathrm{II} \underset{\mathrm{mark}}{\uparrow} \mathsf G_1(\Omega_X, \Lambda_X),\]
    finishing the proof.
\end{proof}
\begin{theorem} \label{thm:SingleLambdaOpen}
    For any space \(X\),
    \[\mathsf G_1(\Lambda_X, \Lambda_X) \leq_{\mathrm{II}}^+ \mathsf G_1(\mathcal O_X, \mathcal O_X)
    \leftrightarrows \mathsf G_1(\Omega_X, \Lambda_X)
    \leftrightarrows \mathsf G_1(\Omega_X, \mathcal O_X).\]
    In particular, the \(\lambda\)-Rothberger and Rothberger games are equivalent for all levels of strategy
    constituting \(\leq_{\mathrm{II}}^+\) except for the Markov strategy level for the second player.
\end{theorem}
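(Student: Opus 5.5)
The theorem is assembled level by level from the lemmas already proved, in the same way Theorem \ref{thm:LambdaTypeMengerRelations} was. Recall that \(\leq_{\mathrm{II}}^+\) consists of five implications:
\begin{itemize}[leftmargin=2em]
    \item P1 lacks a winning constant strategy;
    \item P1 lacks a winning strategy;
    \item P2 has a winning strategy;
    \item P2 has a winning Markov strategy;
    \item P2 has a winning predetermined strategy.
\end{itemize}
Each needs to be checked for the relevant pair of games.

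\emph{First player's levels.} P1 has no winning constant strategy in \(\mathsf G_1(\mathcal A, \mathcal B)\) exactly when \(X \models \binom{\mathcal A}{\mathcal B}\). By Lemma \ref{lem:BasicLambdaLindelof}, \(\lambda\)-Lindel\"{o}f, Lindel\"{o}f, \(\binom{\Omega}{\Lambda}\) and \(\binom{\Omega}{\mathcal O}\) are all equivalent, so this level holds in every direction among the four games. For perfect information, Lemma \ref{lem:RothbergerEquivalence} gives
\[\mathrm{I} \not\uparrow \mathsf G_1(\mathcal O_X,\mathcal O_X) \iff \mathrm{I}\not\uparrow \mathsf G_1(\Lambda_X,\Lambda_X) \iff \mathrm{I}\not\uparrow\mathsf G_1(\Omega_X,\Lambda_X) \iff \mathrm{I}\not\uparrow\mathsf G_1(\Omega_X,\mathcal O_X).\]

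\emph{Second player's perfect-information level.} Lemma \ref{lem:TwoSingleStrategic} gives the equivalence of \(\mathrm{II}\uparrow\) for all four games.

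\emph{Second player's Markov level.} Lemma \ref{lem:MarkovRothbergerThing} gives the equivalence for \(\mathsf G_1(\mathcal O_X,\mathcal O_X)\), \(\mathsf G_1(\Omega_X,\Lambda_X)\) and \(\mathsf G_1(\Omega_X,\mathcal O_X)\). For \(\mathsf G_1(\Lambda_X,\Lambda_X)\) we only need one direction. A winning Markov strategy in \(\mathsf G_1(\Lambda_X,\Lambda_X)\), restricted to \(\Omega_X \subseteq \Lambda_X\), is winning in \(\mathsf G_1(\Omega_X,\Lambda_X)\). By Lemma \ref{lem:MarkovRothbergerThing}, this yields a winning Markov strategy in \(\mathsf G_1(\mathcal O_X,\mathcal O_X)\). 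The reverse implication is what fails: Theorem \ref{thm:CountableNotMarkoveLambdaRoth} shows the countable discrete space is Markov Rothberger but not Markov \(\lambda\)-Rothberger. This is exactly why only \(\leq_{\mathrm{II}}^+\) is asserted at the left, and it justifies the ``in particular'' clause.

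\emph{Second player's predetermined level.} This is the step I expect to be the main obstacle, since no lemma above treats it. I would use duality: a winning predetermined strategy for P2 in \(\mathsf G_1(\mathcal A,\mathcal B)\) corresponds to a winning constant strategy for P1 in the dual point-type game. For \(\mathsf G_1(\mathcal O_X,\mathcal O_X)\) and \(\mathsf G_1(\Omega_X,\mathcal O_X)\) this means countability and topological countability respectively. Such spaces are known (cf.\ \cite[Theorem 4.17]{CCHMengerRothbergerSurvey}) to coincide at this level.

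To move between \(\mathcal O\)-targets and \(\Lambda\)-targets, I would adapt the argument of Proposition \ref{prop:VerySmallMeansLambdaRoth} with a bijection \(\beta:\omega^2\to\omega\). Given a countable \(\{x_n\}\) witnessing topological countability, P2 at round \(n\) picks any element of the presented cover containing \(x_{a_n}\). The difficulty is guaranteeing infinitely many distinct sets without memory. I would handle this by either arguing via Proposition \ref{prop:MarkLambdaRothbergerIsTopCount}-style intersections \(E_n\), or by falling back on the implication chain \(\mathrm{II}\uparrow_{\mathrm{pre}}\mathsf G_1(\Lambda,\Lambda) \Rightarrow \mathrm{II}\uparrow_{\mathrm{pre}}\mathsf G_1(\Omega,\Lambda) \Rightarrow \mathrm{II}\uparrow_{\mathrm{pre}}\mathsf G_1(\Omega,\mathcal O) \Rightarrow \mathrm{II}\uparrow_{\mathrm{pre}}\mathsf G_1(\mathcal O,\mathcal O)\). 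The last link uses that predetermined implies Markov, together with Lemma \ref{lem:MarkovRothbergerThing} and \cite[Theorem 4.17]{CCHMengerRothbergerSurvey}, where the Markov and predetermined levels coincide for \(\mathsf G_1(\mathcal O_X,\mathcal O_X)\).

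For the converse direction from \(\mathsf G_1(\mathcal O_X,\mathcal O_X)\) to \(\mathsf G_1(\Omega_X,\Lambda_X)\), I would pass through \(\mathsf G_1(\Omega_X,\Omega_X)\), which is equivalent by \cite[Theorem 4.17]{CCHMengerRothbergerSurvey}, and then use \(\Omega_X\subseteq\Lambda_X\).
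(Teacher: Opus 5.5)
Your handling of the constant and perfect-information levels for P1, the perfect-information level for P2, and the Markov level for P2 (including the countable discrete space as the counterexample to the reverse Markov implication) matches the paper's assembly of Lemmas \ref{lem:BasicLambdaLindelof}, \ref{lem:RothbergerEquivalence}, \ref{lem:TwoSingleStrategic}, \ref{lem:MarkovRothbergerThing} and Theorem \ref{thm:CountableNotMarkoveLambdaRoth}. The gap is in your list of implications. \(\leq_{\mathrm{II}}^+\) consists of the constant, \emph{predetermined}, and perfect-information levels for P1, together with the perfect-information and Markov levels for P2. There is no predetermined level for P2, since P2 must respond to P1's moves. You have therefore replaced a required level by one that does not belong to the relation. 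As a result, you never verify that P1 lacks a winning predetermined strategy, i.e.\ the selection principles \(\mathsf S_1(\mathcal O,\mathcal O)\), \(\mathsf S_1(\Lambda,\Lambda)\), \(\mathsf S_1(\Omega,\Lambda)\), \(\mathsf S_1(\Omega,\mathcal O)\). That level is also part of what the ``in particular'' clause asserts. It is filled at once by items (i)--(iv) of Lemma \ref{lem:RothbergerEquivalence}, because P1 has no winning predetermined strategy in \(\mathsf G_1(\mathcal A,\mathcal B)\) exactly when \(\mathsf S_1(\mathcal A,\mathcal B)\) holds. This is what the paper uses.

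The paragraph you flag as the ``main obstacle'' should be discarded, and its assertions are incorrect as stated. The strategy type you describe there, dual to constant strategies for P1, is the tactical level, which the paper deliberately excludes from \(\leq_{\mathrm{II}}^+\). P2 has a winning tactical strategy in \(\mathsf G_1(\mathcal O_X,\mathcal O_X)\) exactly when \(X\) has no nontrivial open covers, not when \(X\) is countable. Countability and topological countability characterize the Markov level, which is dual to predetermined strategies for P1. Likewise, the Markov and tactical levels do not coincide for the Rothberger game. The countable discrete space is Markov Rothberger, yet P1 defeats any tactical strategy by repeating a fixed nontrivial cover. With the list of levels corrected, your argument is the paper's proof.
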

\begin{proof}
    We first address
    \[\mathsf G_1(\Lambda_X, \Lambda_X) \leq_{\mathrm{II}}^+ \mathsf G_1(\mathcal O_X, \mathcal O_X).\]
    Note that Lemma \ref{lem:BasicLambdaLindelof} gives the equivalence for the level of
    constant strategies for the first player,
    Lemma \ref{lem:RothbergerEquivalence} gives the equivalences of the predetermined
    and perfect-information strategy levels for the first player, and
    Lemma \ref{lem:TwoSingleStrategic} gives the equivalence of the perfect-information strategies
    for the second player.
    By Proposition \ref{prop:MarkLambdaRothbergerIsTopCount}, we see that
    \[\mathrm{II} \underset{\mathrm{mark}}{\uparrow} \mathsf G_1(\Lambda_X, \Lambda_X)
    \implies \mathrm{II} \underset{\mathrm{mark}}{\uparrow} \mathsf G_1(\mathcal O_X, \mathcal O_X),\]
    but, by Corollary \ref{cor:NontrivialSpacesAreNotLambdaRoth}, we see that the converse fails
    since any countably infinite Hausdorff space is Markov Rothberger but not Markov \(\lambda\)-Rothberger.

    For the equivalence
    \[\mathsf G_1(\mathcal O_X, \mathcal O_X) \leftrightarrows \mathsf G_1(\Omega_X, \Lambda_X)
    \leftrightarrows \mathsf G_1(\Omega_X, \mathcal O_X),\]
    combine Lemmas \ref{lem:BasicLambdaLindelof}, \ref{lem:RothbergerEquivalence}, \ref{lem:TwoSingleStrategic},
    and \ref{lem:MarkovRothbergerThing}.
\end{proof}

\section{An Equivalence of Games} \label{section:TheMainTheorem}

We are now poised to prove Theorem \ref{thm:MainTheorem}, which we do via
Lemmas \ref{lem:LambdaFOBelow} and \ref{lem:sCDBelow}.
We note that the combinatorial result, Theorem \ref{thm:SingleLambdaOpen},
greatly simplifies the proof of Lemma \ref{lem:LambdaFOBelow} by allowing us to apply
our Translation Theorem \cite[Theorem 12]{CHContinuousFunctions}, recorded here
as Theorem \ref{thm:TranslationTheorem}.

\begin{theorem}[{\cite[Theorem 12]{CHContinuousFunctions}}]\label{thm:TranslationTheorem}
    Let \(\mathcal A\), \(\mathcal B\), \(\mathcal C\), and \(\mathcal D\) be collections.
    Suppose there are functions \(\overleftarrow{\mathrm T}_{\mathrm{I},n}:\mathcal B \to \mathcal A\) and
    \(\overrightarrow{\mathrm T}_{\mathrm{II},n}: \left( \bigcup \mathcal A \right) \times \mathcal B \to \bigcup \mathcal B\)
    for each \(n \in \omega\) such that,
    \begin{enumerate}[label=(T\Roman*)]
        \item \label{TranslationFirst}
        if \(x \in \overleftarrow{\mathrm T}_{\mathrm{I},n}(B)\), then \(\overrightarrow{\mathrm T}_{\mathrm{II},n}(x,B) \in B\); and
        \item \label{TranslationSecond}
        if \(\langle x_n : n \in \omega \rangle \in \prod_{n\in\omega} \overleftarrow{\mathrm T}_{\mathrm{I},n}(B_n)\)
        and \(\{x_n : n\in\omega\} \in \mathcal C\),
        then \[\left\{\overrightarrow{\mathrm T}_{\mathrm{II},n}(x_n,B_n) : n \in \omega \right\} \in \mathcal D.\]
    \end{enumerate}
    Then \(\mathsf G_1(\mathcal A,\mathcal C) \leq_{\mathrm{II}} \mathsf G_1(\mathcal B, \mathcal D)\).
\end{theorem}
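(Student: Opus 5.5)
The plan is to verify the four implications that make up \(\leq_{\mathrm{II}}\) (see \cite[Definition 3.8]{CCHMengerRothbergerSurvey}) one at a time. Each is handled by translating a strategy for one game into a strategy for the other through the maps \(\overleftarrow{\mathrm T}_{\mathrm{I},n}\) and \(\overrightarrow{\mathrm T}_{\mathrm{II},n}\). The four implications are: perfect-information and Markov winning strategies for P2 transfer from \(\mathsf G_1(\mathcal A,\mathcal C)\) to \(\mathsf G_1(\mathcal B,\mathcal D)\), and winning perfect-information and predetermined strategies for P1 transfer from \(\mathsf G_1(\mathcal B,\mathcal D)\) back to \(\mathsf G_1(\mathcal A,\mathcal C)\). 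Throughout, \ref{TranslationFirst} will guarantee legality of translated moves and \ref{TranslationSecond} will guarantee that winning is preserved.

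\emph{Second player.} Let \(\tau\) be a winning strategy for P2 in \(\mathsf G_1(\mathcal A,\mathcal C)\). Define a strategy \(\tau'\) in \(\mathsf G_1(\mathcal B,\mathcal D)\) by
\[\tau'\langle B_0,\dots,B_n\rangle = \overrightarrow{\mathrm T}_{\mathrm{II},n}\left(\tau\langle \overleftarrow{\mathrm T}_{\mathrm{I},0}(B_0),\dots,\overleftarrow{\mathrm T}_{\mathrm{I},n}(B_n)\rangle, B_n\right).\]
Since \(\tau\) responds inside \(\overleftarrow{\mathrm T}_{\mathrm{I},n}(B_n)\), \ref{TranslationFirst} shows that \(\tau'\) makes legal moves in \(B_n\). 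Now take any play \(\langle B_n : n \in \omega\rangle\) against \(\tau'\). The points \(x_n\) chosen by \(\tau\) against the translated play \(\langle \overleftarrow{\mathrm T}_{\mathrm{I},n}(B_n) : n \in \omega\rangle\) satisfy \(\{x_n : n \in \omega\}\in\mathcal C\) because \(\tau\) is winning. Hence \ref{TranslationSecond} gives that \(\tau'\) wins. The Markov case is identical, with
\[\tau'(B,n)=\overrightarrow{\mathrm T}_{\mathrm{II},n}\left(\tau(\overleftarrow{\mathrm T}_{\mathrm{I},n}(B),n),B\right).\]
This still depends only on the current move and the round number, because the translation maps are indexed by \(n\).

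\emph{First player.} We argue contrapositively. Suppose \(\sigma\) is a winning strategy for P1 in \(\mathsf G_1(\mathcal B,\mathcal D)\). We build \(\sigma'\) for \(\mathsf G_1(\mathcal A,\mathcal C)\) by simulating a play of \(\sigma\) alongside. Along a partial play \(x_0,\dots,x_{n-1}\), define recursively
\[B_k=\sigma\langle y_0,\dots,y_{k-1}\rangle, \qquad y_k=\overrightarrow{\mathrm T}_{\mathrm{II},k}(x_k,B_k),\]
and set \(\sigma'\langle x_0,\dots,x_{n-1}\rangle=\overleftarrow{\mathrm T}_{\mathrm{I},n}(B_n)\). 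Since each \(x_k\in\overleftarrow{\mathrm T}_{\mathrm{I},k}(B_k)\), \ref{TranslationFirst} gives \(y_k\in B_k\), so the simulated play is a legal play against \(\sigma\). If some play \(\langle x_n : n \in \omega\rangle\) against \(\sigma'\) had \(\{x_n : n \in \omega\}\in\mathcal C\), then \ref{TranslationSecond} would give \(\{y_n : n \in \omega\}\in\mathcal D\), so II would beat \(\sigma\), contrary to assumption. Hence \(\sigma'\) is winning. For predetermined strategies, \(\sigma\) is just a sequence \(\langle B_n : n \in \omega\rangle\), and we take \(\sigma'=\langle \overleftarrow{\mathrm T}_{\mathrm{I},n}(B_n) : n \in \omega\rangle\) with the same argument and no recursion.

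The only delicate step is the perfect-information P1 case. There one must check that the recursion defining \(B_k\) and \(y_k\) is well defined from the history \(x_0,\dots,x_{n-1}\) alone, which it is since every quantity is determined inductively. One must also check that the simulated play is legal at every stage, which is exactly what \ref{TranslationFirst} provides. Everything else is direct substitution.
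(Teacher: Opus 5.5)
Your proposal is correct and follows essentially the same route as the original proof of this translation theorem, which the paper only cites. For P2, you transport Markov and perfect-information strategies by feeding \(\overleftarrow{\mathrm T}_{\mathrm{I},n}(B_n)\) to the old strategy and applying \(\overrightarrow{\mathrm T}_{\mathrm{II},n}\) to its answer. For P1, you pull predetermined and perfect-information strategies back through \(\overleftarrow{\mathrm T}_{\mathrm{I},n}\) and simulate the \(\mathsf G_1(\mathcal B,\mathcal D)\) play via \(\overrightarrow{\mathrm T}_{\mathrm{II},n}\). In every case (TI) gives legality and (TII) transfers the win, and your four implications match the components of \(\leq_{\mathrm{II}}\) exactly.
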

\begin{lemma} \label{lem:LambdaFOBelow}
    Let \(X\) be an infinite Tychonoff space.
    Then
    \[\mathsf G_1(\mathcal O_X, \mathcal O_X)
    \dualarrows \mathsf G_1(\mathcal N[X_{\mathrm{fin}}], \neg \mathcal O_X)
    \leftrightarrows \mathsf G_1(\mathcal N[X_{\mathrm{fin}}], \neg \Lambda_X)
    \leq_{\mathrm{II}} \mathsf G_1(\mathscr T_{C_p(X)}, s\mathrm{CD}_{C_p(X)}).\]
\end{lemma}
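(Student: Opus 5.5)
The plan is to prove the three relations in the chain separately. The first two come from the combinatorics of Section \ref{section:LargeCovers} together with the Clontz duality results. The last comes from a single application of Theorem \ref{thm:TranslationTheorem}.

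\emph{The duality and the equivalence.} By Corollary 31 of \cite{ClontzDualSelection}, applied once with \(\mathcal B = \mathcal O_X\) and once with \(\mathcal B = \Lambda_X\),
\[\mathsf G_1(\Omega_X, \mathcal O_X) \dualarrows \mathsf G_1(\mathcal N[X_{\mathrm{fin}}], \neg \mathcal O_X)
\quad\text{and}\quad
\mathsf G_1(\Omega_X, \Lambda_X) \dualarrows \mathsf G_1(\mathcal N[X_{\mathrm{fin}}], \neg \Lambda_X).\]
Theorem \ref{thm:SingleLambdaOpen} gives
\[\mathsf G_1(\mathcal O_X, \mathcal O_X) \leftrightarrows \mathsf G_1(\Omega_X, \Lambda_X) \leftrightarrows \mathsf G_1(\Omega_X, \mathcal O_X).\]
Duality interchanges the roles of the players strategy level by strategy level: perfect-information strategies match perfect-information ones, and Markov strategies match predetermined ones. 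Hence an equivalence \(\leftrightarrows\) between two games transfers to their duals. This yields the equivalence \(\mathsf G_1(\mathcal N[X_{\mathrm{fin}}], \neg \mathcal O_X) \leftrightarrows \mathsf G_1(\mathcal N[X_{\mathrm{fin}}], \neg \Lambda_X)\). Composing the duality for \(\mathsf G_1(\Omega_X,\mathcal O_X)\) with \(\mathsf G_1(\mathcal O_X,\mathcal O_X) \leftrightarrows \mathsf G_1(\Omega_X,\mathcal O_X)\) gives the asserted duality with the Rothberger game. The one point needing care is that the levels in \(\leq_{\mathrm{II}}^+\) are exactly the levels permuted by duality; I expect this to be bookkeeping.

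\emph{The translation.} For \(\mathsf G_1(\mathcal N[X_{\mathrm{fin}}], \neg \Lambda_X) \leq_{\mathrm{II}} \mathsf G_1(\mathscr T_{C_p(X)}, s\mathrm{CD}_{C_p(X)})\) I apply Theorem \ref{thm:TranslationTheorem} with \(\mathcal A = \mathcal N[X_{\mathrm{fin}}]\), \(\mathcal C = \neg\Lambda_X\), \(\mathcal B = \mathscr T_{C_p(X)}\), and \(\mathcal D = s\mathrm{CD}_{C_p(X)}\).

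\begin{enumerate}
    \item For \(B \in \mathscr T_{C_p(X)}\), fix a basic set \([f_B; F_B, \varepsilon_B] \subseteq B\) with \(F_B\) nonempty and finite. Set \(\overleftarrow{\mathrm T}_{\mathrm{I},n}(B) = \mathcal N_{F_B}\).
    \item For \(U \in \mathcal N_{F_B}\), the set \(X \setminus U\) is closed, nonempty (since \(U\) is proper), and disjoint from the finite set \(F_B\). Complete regularity then gives a continuous \(h : X \to [0,1]\) with \(h = 0\) on \(F_B\) and \(h = 1\) on \(X \setminus U\); take a finite product of separating functions, one for each point of \(F_B\). Define
    \[\overrightarrow{\mathrm T}_{\mathrm{II},n}(U,B) = (1-h)f_B + n h.\]
    \item This function agrees with \(f_B\) on \(F_B\), so it lies in \(B\), which is \ref{TranslationFirst}. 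It also takes the constant value \(n\) on \(X \setminus U\).
\end{enumerate}

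For \ref{TranslationSecond}, suppose \(U_n \in \mathcal N_{F_{B_n}}\) and \(\{U_n : n\in\omega\}\) is not a large cover. Then some \(x\) lies in \(U_n\) for only finitely many \(n\). Write \(g_n = \overrightarrow{\mathrm T}_{\mathrm{II},n}(U_n,B_n)\) and let \(g \in C_p(X)\) be arbitrary. For all but finitely many \(n\), \(x \notin U_n\) and so \(g_n(x) = n\). Only finitely many integers \(n\) satisfy \(|n - g(x)| < 1\). Therefore
\[\{ g_n : n \in \omega \} \cap [g;\{x\},1]\]
is finite, and \(\{g_n : n\in\omega\}\) is strongly closed discrete.

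The main obstacle, given Example \ref{example:FinitelyManyFunctions}, is making sure the resulting set of functions really is strongly closed discrete rather than merely indexed correctly. The device \(g_n(x) = n\) off \(U_n\) is designed to settle this by the explicit count above. It needs no pairwise distinctness of the chosen functions and no dependence on the earlier moves. If the translation theorem's bookkeeping forces functions to depend only on \((U,B,n)\), this choice already complies.
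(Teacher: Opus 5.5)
Your handling of the duality and of the equivalence \(\mathsf G_1(\mathcal N[X_{\mathrm{fin}}], \neg \mathcal O_X) \leftrightarrows \mathsf G_1(\mathcal N[X_{\mathrm{fin}}], \neg \Lambda_X)\) is the paper's argument. Your verification of \ref{TranslationSecond}, however, has a genuine gap. From \(\{U_n : n \in \omega\} \notin \Lambda_X\) you conclude that some \(x\) lies in \(U_n\) for only finitely many \(n\), and this does not follow. \(\Lambda_X\) is a property of the \emph{set} \(\{U_n : n \in \omega\}\), and the moves \(U_n\) may repeat. So a point can lie in \(U_n\) for infinitely many indices while lying in only finitely many distinct members. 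This is exactly the index-versus-set issue raised in Section~\ref{subsec:Motivation}. Your translation actually fails on such runs. Take \(X = \omega\) discrete and \(B_n = [\mathbf 0; \{2\}, 1]\) for every \(n\), with the legitimate choices \(f_{B_n} = \mathbf 0\) and \(F_{B_n} = \{2\}\). Let \(U_{2k} = \omega \setminus \{0\}\) and \(U_{2k+1} = \omega \setminus \{1\}\), and use the admissible separating function \(h = \mathbf 1_{X \setminus U}\). Each \(U_n\) lies in \(\mathcal N_{\{2\}}\), and \(\{U_n : n \in \omega\}\) has only two members, so it is not a large cover. Yet \(g_{2k} = 2k \cdot \mathbf 1_{\{0\}}\) and \(g_{2k+1} = (2k+1) \cdot \mathbf 1_{\{1\}}\). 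Hence for every \(z \in X\) and \(\varepsilon > 0\), the set \([\mathbf 0; \{z\}, \varepsilon]\) contains infinitely many distinct \(g_n\), and \(\{g_n : n \in \omega\}\) is not strongly closed discrete. Your count \(g_n(x) = n\) controls only the indices with \(x \notin U_n\). The indices at which a set containing \(x\) is repeated are not controlled at all.

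The repair is the route the paper takes: apply Theorem~\ref{thm:TranslationTheorem} with \(\mathcal C = \neg \mathcal O_X\) instead of \(\neg \Lambda_X\). If \(\{U_n : n \in \omega\}\) is not a cover, there is one point \(\bar x\) outside every \(U_n\). Then \(g_n(\bar x) = n\) for all \(n\), and your counting argument goes through verbatim; your \((1-h)f_B + nh\) is just an explicit instance of the paper's \(\overrightarrow{\mathrm T}_{\mathrm{II},n}\). This gives \(\mathsf G_1(\mathcal N[X_{\mathrm{fin}}], \neg \mathcal O_X) \leq_{\mathrm{II}} \mathsf G_1(\mathscr T_{C_p(X)}, s\mathrm{CD}_{C_p(X)})\). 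Since \(\leftrightarrows\) contains \(\leq_{\mathrm{II}}\) in both directions and \(\leq_{\mathrm{II}}\) is transitive, the equivalence from your first part transports this to \(\neg \Lambda_X\). That is why the paper proves Theorem~\ref{thm:SingleLambdaOpen} first: \(\neg \mathcal O_X\) hands you a single point at which every \(g_n\) is controlled, and \(\neg \Lambda_X\) does not. A smaller bookkeeping point: duality pairs the constant level for P1 with the tactical level for P2, and the latter is not one of the levels of \(\leq_{\mathrm{II}}^+\), so the levels are not literally permuted among themselves. This is harmless here. For infinite Tychonoff \(X\) the relevant covers exist, so P2 has no winning tactical strategy in any of these single-selection games, since against a constant play P2 produces a single proper open set. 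It should still be said.
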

\begin{proof}
    By Theorem \ref{thm:SingleLambdaOpen},
    \(\mathsf G_1(\mathcal O_X, \mathcal O_X)
    \leftrightarrows \mathsf G_1(\Omega_X, \Lambda_X) \leftrightarrows \mathsf G_1(\Omega_X, \mathcal O_X)\).
    Moreover, by duality,
    \[\mathsf G_1(\mathcal N[X_{\mathrm{fin}}], \neg \Lambda_X)
    \dualarrows \mathsf G_1(\Omega_X, \Lambda_X)
    \leftrightarrows \mathsf G_1(\Omega_X, \mathcal O_X)
    \dualarrows \mathsf G_1(\mathcal N[X_{\mathrm{fin}}], \neg \mathcal O_X).\]
    Hence,
    \[\mathsf G_1(\mathcal N[X_{\mathrm{fin}}], \neg \Lambda_X)
    \leftrightarrows \mathsf G_1(\mathcal N[X_{\mathrm{fin}}], \neg \mathcal O_X).\]
    
    We now prove that
    \[\mathsf G_1(\mathcal N[X_{\mathrm{fin}}], \neg \mathcal O_X)
    \leq_{\mathrm{II}} \mathsf G_1(\mathscr T_{C_p(X)}, s\mathrm{CD}_{C_p(X)})\]
    via an application of Theorem \ref{thm:TranslationTheorem}.
    Fix a choice \[( f_W, F_W , \varepsilon_W ) \in C_p(X) \times X_{\mathrm{fin}} \times (0,1)\]
    for each \(W \in \mathscr T_{C_p(X)}\) such that
    \([f_W ; F_W , \varepsilon_W ] \subseteq W\).
    Then define \(\overleftarrow{\mathrm T}_{\mathrm{I},n} : \mathscr T_{C_p(X)} \to \mathcal N[X_{\mathrm{fin}}]\)
    by \(\overleftarrow{\mathrm T}_{\mathrm{I},n}(W) = \mathcal N_{F_W}\).
    Define, also, \(\overrightarrow{\mathrm T}_{\mathrm{II},n} : \mathscr T_X \times \mathscr T_{C_p(X)} \to C_p(X)\)
    to be such that
    \[\overrightarrow{\mathrm T}_{\mathrm{II},n}(U, W) \restriction_{F_W} = f_W \restriction_{F_W} \wedge
    \overrightarrow{\mathrm T}_{\mathrm{II},n}(U, W) \restriction_{X \setminus U} = n,\]
    if such a function exists.
    If no such function exists, we can define \(\overrightarrow{\mathrm T}_{\mathrm{II},n}(U,W)\)
    arbitrarily.

    Observe that, if \(W \in \mathscr T_{C_p(X)}\) and \(U \in \overleftarrow{\mathrm T}_{\mathrm{I},n}(W)\),
    then \(F_W \subseteq U\) and \(\overrightarrow{\mathrm T}_{\mathrm{II},n}(U, W)\) consequently has the property
    that \[\overrightarrow{\mathrm T}_{\mathrm{II},n}(U, W) \restriction_{F_W} = f_W \restriction_{F_W}.\]
    Hence,
    \[\overrightarrow{\mathrm T}_{\mathrm{II},n}(U, W) \in [f_W ; F_W, \varepsilon_W ] \subseteq W.\]
    So \ref{TranslationFirst} is satisfied.

    To establish \ref{TranslationSecond}, suppose \(\langle W_n : n \in \omega \rangle \in \mathscr T_{C_p(X)}^\omega\)
    and \[\langle U_n : n \in \omega \rangle \in \prod_{n\in\omega} \overleftarrow{\mathrm T}_{\mathrm{I},n}(W_n)\]
    are such that \(\{ U_n : n \in \omega \} \not\in \mathcal O_X\).
    Note then that we can choose \(\bar{x} \in X \setminus \bigcup \{ U_n : n \in \omega \}\).
    For brevity, let \(g_n = \overrightarrow{\mathrm T}_{\mathrm{II},n}(U_n, W_n)\) for each \(n \in \omega\).
    We show that \(\{ g_n : n \in \omega \} \in s\mathrm{CD}_{C_p(X)}\).
    Indeed, suppose \(g \in C_p(X)\) and consider \(\varepsilon = 1\).
    Then, let \(M \in \omega\) be such that \(g(\bar{x}) + \varepsilon < M\).
    Since \(g_n(\bar{x}) = n\) for each \(n \in \omega\), we see that
    \(| \{ g_n : n \in \omega \} \cap [ g; \{\bar{x}\}, \varepsilon ] | \leq M\),
    finishing the proof.
\end{proof}

We note that, when the second player can keep track of their play history,
then they can guarantee that they form an injective sequence of functions
which then guarantees the conclusion of \cite[Lemma 3.4(a)]{Chiozini}.
We make this explicit by choosing the function space neighborhoods in a way
that necessitates pairwise distinctness of the chosen functions in the proof below in both
perfect-information implications and the predetermined level for P1.
In the Markov level for the second player, however, we must exercise a bit more care to guarantee
that the sequence of functions chosen in the game on \(C_p(X)\) is pairwise distinct.

In the proof, we will use the bijection \(F \mapsto \mathcal N_F\),
\(X_{\mathrm{fin}} \to \mathcal N[X_{\mathrm{fin}}]\), to simplify some notation.
\begin{lemma} \label{lem:sCDBelow}
     Let \(X\) be an infinite Tychonoff space.
     Then 
     \[
        \mathsf G_1(\mathscr T_{C_p(X)}, s\mathrm{CD}_{C_p(X)})
        \leq_{\mathrm{II}} \mathsf G_1(\mathcal N[X_{\mathrm{fin}}], \neg \Lambda_X).
     \]
\end{lemma}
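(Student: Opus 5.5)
The plan is to apply the Translation Theorem (Theorem \ref{thm:TranslationTheorem}) with the roles reversed from Lemma \ref{lem:LambdaFOBelow}. Using the bijection \(F \mapsto \mathcal N_F\), I need two families of maps:
\begin{itemize}
    \item \(\overleftarrow{\mathrm T}_{\mathrm{I},n} : \mathcal N[X_{\mathrm{fin}}] \to \mathscr T_{C_p(X)}\), turning a finite set \(F\) played by P1 into an open subset of \(C_p(X)\);
    \item \(\overrightarrow{\mathrm T}_{\mathrm{II},n} : C_p(X) \times \mathcal N[X_{\mathrm{fin}}] \to \mathscr T_X\), turning a function \(g\) chosen in that open set back into a neighbourhood of \(F\).
\end{itemize}
Condition (TI) must hold, and a strongly closed discrete selection \(\{g_n : n \in \omega\}\) must produce a family \(\{U_n : n \in \omega\}\) that is not a large cover.

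Following Process A of \cite{Chiozini}, the first attempt is \(\overleftarrow{\mathrm T}_{\mathrm{I},n}(\mathcal N_F) = [\mathbf 0; F, 2^{-n}]\). Then \(\overrightarrow{\mathrm T}_{\mathrm{II},n}(g, \mathcal N_F)\) would be \(g^{-1}(-2^{-n}, 2^{-n})\), with one point outside \(F\) removed so that the set is proper; this is possible because \(X\) is infinite. Condition (TI) is then immediate, since \(g \in [\mathbf 0; F, 2^{-n}]\) forces \(F \subseteq g^{-1}(-2^{-n},2^{-n})\).

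For (TII), apply strong closed discreteness at \(g = \mathbf 0\). This gives \(x \in X\) and \(\varepsilon > 0\) such that \(\{ g_n : n \in \omega\} \cap [\mathbf 0;\{x\},\varepsilon]\) is finite. Once \(2^{-n} < \varepsilon\), the condition \(x \in U_n\) forces \(g_n \in [\mathbf 0;\{x\},\varepsilon]\), so only finitely many distinct \(g_n\) can put \(x\) into \(U_n\). This yields that \(x\) lies in only finitely many \(U_n\), and hence \(\{U_n : n \in \omega\} \notin \Lambda_X\), \emph{provided the \(g_n\) are pairwise distinct}. Example \ref{example:FinitelyManyFunctions} shows exactly this failure when repetitions are allowed.

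Distinctness is therefore the main obstacle, and I would handle it level by level.
\begin{itemize}
    \item For the perfect-information level for P2, and for the levels involving P1 (where the translated strategy sees the whole history), I would not use the Translation Theorem verbatim. Instead I would shrink the open set handed to the \(C_p(X)\)-strategy: intersect \([\mathbf 0; F, 2^{-n}]\) with a basic open set excluding the finitely many previously chosen functions. This is possible since \(C_p(X)\) is Hausdorff and, \(X\) being infinite Tychonoff, has no isolated points, so the intersection is nonempty and open. The resulting \(g_n\) are automatically distinct and the argument above goes through.
    \item For the Markov level for P2, only \((F, n)\) is visible, so I would try to encode \(n\) into the open set itself. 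Fix, using that \(X\) is infinite Tychonoff, a sequence of points \(z_k\) with a discrete family of open neighbourhoods. Take \(\overleftarrow{\mathrm T}_{\mathrm{I},n}(\mathcal N_F)\) to be \([\mathbf 0; F, 2^{-n}]\) intersected with the open set of \(g\) satisfying \(g(z_k) \in (n - \tfrac14, n + \tfrac14)\) for the least \(k\) with \(z_k \notin F\) (and, say, \(g(z_j)\) near \(0\) for smaller \(j\) not in \(F\)). The hope is that this forces a fixed \(g\) to be chosen in only finitely many rounds.
\end{itemize}
If that encoding fails to guarantee distinctness, the fallback is to let \(\overrightarrow{\mathrm T}_{\mathrm{II},n}\) depend on \(g\) in a way that is insensitive to \(n\) for repeated \(g\). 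Then a function occurring infinitely often contributes only finitely many distinct open sets, and the large-cover count still fails at the point \(x\) supplied by strong closed discreteness. I expect verifying this Markov-level bookkeeping to be the hardest part.
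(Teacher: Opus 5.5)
\textbf{Gap: the predetermined level for P1.} Your treatment of the two perfect-information levels is sound: deleting the finitely many previously chosen functions from \([\mathbf 0; F, 2^{-n}]\) works there. The predetermined level for P1 does not belong in that group, because your parenthetical ``the translated strategy sees the whole history'' is false there. To obtain \(\mathrm{I} \underset{\mathrm{pre}}{\uparrow} \mathsf G_1(\mathcal N[X_{\mathrm{fin}}], \neg \Lambda_X) \implies \mathrm{I} \underset{\mathrm{pre}}{\uparrow} \mathsf G_1(\mathscr T_{C_p(X)}, s\mathrm{CD}_{C_p(X)})\) you must commit in advance to a fixed sequence \(\langle W_n : n \in \omega \rangle\), so nothing can be deleted. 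With \(W_n = [\mathbf 0; F_n, 2^{-n}]\), P2 may answer \(\mathbf 0\) in every round. The set \(\{\mathbf 0\}\) is strongly closed discrete, while the translated sets \(X \setminus \{\gamma(F_n)\}\) may well form a large cover, so you cannot conclude anything.

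\textbf{The Markov level is also left open.} The encoding is offered only as a ``hope''. The fallback cannot work: a set depending only on \(g\) would, for \(g = \mathbf 0\), have to contain every finite \(F\) and hence equal \(X\). Letting it depend on \(F\) as well brings back exactly Example \ref{example:FinitelyManyFunctions}. Separately, an infinite discrete family of open sets need not exist (no infinite compact space has one), but you do not need it.

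\textbf{Your encoding works and closes both holes.} Take any pairwise distinct \(z_k \in X\) and let \(k(F) = \min\{k : z_k \notin F\}\). Put \(V_{F,n} = [\mathbf 0; F, 2^{-n}] \cap [\mathbf n; \{z_{k(F)}\}, \tfrac14]\), where \(\mathbf n\) is the constant function \(n\). This set is open and proper, and it is nonempty by complete regularity. Suppose \(1 \le n < m\) and \(h \in V_{F,n} \cap V_{G,m}\).
\begin{enumerate}
    \item If \(k(F) < k(G)\), then \(z_{k(F)} \in G\), so \(|h(z_{k(F)})| < 2^{-m}\), contradicting \(h(z_{k(F)}) > n - \tfrac14\).
    \item If \(k(G) < k(F)\), then \(z_{k(G)} \in F\), so \(|h(z_{k(G)})| < 2^{-n} \le \tfrac12\), contradicting \(h(z_{k(G)}) > m - \tfrac14\).
    \item If \(k(F) = k(G)\), then \(|n - m| < \tfrac12\), which is impossible.
\end{enumerate}
Hence any selection \(g_n \in V_{F_n,n}\) is at most two-to-one. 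Your counting argument at \(\mathbf 0\) then gives (TII) for every sequence, with no need for distinctness to come from the history. Theorem \ref{thm:TranslationTheorem} therefore delivers all four implications of \(\leq_{\mathrm{II}}\) at once.

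\textbf{Comparison with the paper.} This is a genuinely different and more uniform route. The paper gets distinctness from the sets \(B_{G_n,n}\), which are pairwise disjoint only for nested \(G_n\) and so are unusable by a Markov strategy. It handles the Markov level by duality: it converts a Markov strategy for P2 into a full-information strategy for P1 in \(\mathsf G_1(\Omega_X, \Lambda_X)\) that deletes earlier choices, and then invokes Lemma \ref{lem:RothbergerEquivalence}. It handles the predetermined level through the countability of \(X\).
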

\begin{proof}
    Fix a function \(\gamma : X_{\mathrm{fin}} \to X\) such that
    \(\gamma(F) \in X \setminus F\), which is possible since \(X\) is infinite.
    We will use this function throughout the proof to ensure that the open sets we define are proper.

    We first address the implication
    \[\mathrm{II} \underset{\mathrm{mark}}{\uparrow} \mathsf G_1(\mathscr T_{C_p(X)}, s\mathrm{CD}_{C_p(X)})
    \implies \mathrm{II} \underset{\mathrm{mark}}{\uparrow} \mathsf G_1(\mathcal N[X_{\mathrm{fin}}], \neg \Lambda_X).\]
    By duality, we have that
    \begin{align*}
        \mathrm{II} \underset{\mathrm{mark}}{\uparrow} \mathsf G_1(\mathcal N[X_{\mathrm{fin}}], \neg \Lambda_X)
        \iff \mathrm{I} \underset{\mathrm{pre}}{\uparrow} \mathsf G_1(\Omega_X, \Lambda_X)
        &\iff X \not\models \mathsf S_1(\Omega, \Lambda),
    \end{align*}
    so, we will show that
    \[\mathrm{II} \underset{\mathrm{mark}}{\uparrow} \mathsf G_1(\mathscr T_{C_p(X)}, s\mathrm{CD}_{C_p(X)})
    \implies X \not\models \mathsf S_1(\Omega, \Lambda)\]
    by way of the contrapositive.
    So suppose \(X \models \mathsf S_1(\Omega, \Lambda)\) and suppose
    \(\tau : \mathscr T_{C_p(X)} \times \omega \to C_p(X)\) is given with the property
    that \(\tau(W,n) \in W\) for each \(W \in \mathscr T_{C_p(X)}\), \(n \in \omega\);
    i.e. \(\tau\) is a Markov strategy.
    We will use \(\tau\) to define an associated strategy \(\sigma\) for P1 in \(\mathsf G_1(\Omega_X, \Lambda_X)\)
    as follows.
    We will then use the fact that \(\sigma\) cannot be winning to show that \(\tau\) is not winning.

    First, for each \(F \in X_{\mathrm{fin}}\) and \(n \in \omega\), let \(h_{F} \in C_p(X)\)
    be such that \(h_{F}[F] = \{0\}\) and \(h_{F}(\gamma(F)) = 3\).
    We will now recursively define structures for \(F \in X_{\mathrm{fin}}\),
    \(F^\ast := F \cup \{ \gamma(F) \}\),
    and \(\langle F_j : j < n \rangle \in X_{\mathrm{fin}}^n\).
    Suppose that, for \(n \in \omega\), we have defined
    \(W_{G, \langle F_\ell : \ell < j \rangle}\) for each \(j < n \) and \(G \in X_{\mathrm{fin}}\).
    Then let
    \[W_{F,\langle F_j : j < n \rangle}
    = [ h_{F}; F^\ast, 2^{-n}] \setminus \{ \tau(W_{F_j, \langle F_\ell: \ell < j \rangle}, j) : j < n \}\]
    and note that \(W_{F, \langle F_j : j < n \rangle} \in \mathscr T_{C_p(X)}\).
    Consider now
    \[U_{F,\langle F_j : j < n \rangle} = \tau(W_{F,\langle F_j : j < n \rangle}, n)^{-1}\left(-2^{-n}, 2^{-n} \right).\]
    We claim that \(U_{F,\langle F_j : j < n \rangle} \in \mathscr T_X\).
    First, note that \(F \subseteq U_{F,\langle F_j : j < n \rangle}\) since, for \(x \in F\),
    \[\tau(W_{F,\langle F_j : j < n \rangle}, n) \in W_{F,\langle F_j : j < n \rangle} \subseteq [ h_{F}; F^\ast, 2^{-n}]
    \implies \left| \tau(W_{F,\langle F_j : j < n \rangle}, n)(x) \right| < 2^{-n}.\]
    Then, for \(x = \gamma(F)\), note that
    \begin{align*}
        &\left| \tau(W_{F,\langle F_j : j < n \rangle}, n)(x) - h_F(x) \right| < 2^{-n}\\
        &\implies \tau(W_{F,\langle F_j : j < n \rangle}, n)(x) > h_{F}(x) - 2^{-n} = 3 - 2^{-n} > 2^{-n}.
    \end{align*}
    That is, \(\gamma(F) = x \not\in U_{F,\langle F_j : j < n \rangle}\), so \(U_{F,\langle F_j : j < n \rangle} \in \mathscr T_X\).

    Now, for \(\langle F_j : j < n \rangle \in X_{\mathrm{fin}}^n\), \(n\in\omega\), let
    \[\mathscr U_{\langle F_j : j < n \rangle} = \{ U_{F, \langle F_j : j < n \rangle} : F \in X_{\mathrm{fin}} \} \in \Omega_X\]
    and fix a choice \(\mathbf F_{{\langle F_j : j < n \rangle}} : \mathscr U_{\langle F_j : j < n \rangle} \to X_{\mathrm{fin}}\)
    to be such that, for each \(U \in \mathscr U_{\langle F_j : j < n \rangle}\),
    \[U = U_{\mathbf F_{{\langle F_j : j < n \rangle}}(U), \langle F_j : j < n \rangle}\]
    and, for \(U_1, U_2 \in \mathscr U_{\langle F_j : j < n \rangle}\),
    \[U_1 := U_{F, {\langle F_j : j < n \rangle}} = U_{G, {\langle F_j : j < n \rangle}} =: U_2
    \implies \mathbf F_{\langle F_j : j < n \rangle}(U_1) = \mathbf F_{\langle F_j : j < n \rangle}(U_2).\]

    Initialize
    \[\sigma\langle \rangle = \mathscr U_{\langle \rangle }\]
    and suppose, for \(n \in \omega\), we have a sequence \(\langle U_j : j \leq n \rangle\)
    of legal plays by P2 and a sequence \(\langle F_j : j \leq n \rangle\)
    where
    \[F_j = \mathbf F_{\langle F_\ell : \ell < j \rangle}(U_j)\]
    for each \(j \leq n\).
    Then
    \[\sigma\langle U_j : j \leq n \rangle = \mathscr U_{\langle F_j : j \leq n \rangle}.\]
    Now, since \(X \models \mathsf S_1(\Omega, \Lambda)\), by Lemma \ref{lem:RothbergerEquivalence},
    \(\sigma\) cannot be winning.
    So there must exist some sequence \(\langle U_n : n \in \omega \rangle\) of legal plays
    by P2 such that \(\{ U_n : n \in \omega \} \in \Lambda_X\).
    Let \(\langle F_n : n \in \omega \rangle\) be the associated sequence of finite subsets of \(X\)
    such that \(F_n = \mathbf F_{\langle F_j : j < n \rangle}(U_n)\) for each \(n \in \omega\).

    Let \(g_n = \tau(W_{F_n, \langle F_j : j < n \rangle}, n)\) for \(n \in \omega\).
    Note that, since
    \begin{align*}
        g_n = \tau(W_{F_n, \langle F_j : j < n \rangle}, n)
        \in W_{F_n, \langle F_j : j < n \rangle}
        &= [ h_{F_n}; F_n^\ast, 2^{-n}] \setminus \{ \tau(W_{F_j, \langle F_\ell: \ell < j \rangle}, j) : j < n \}\\
        &= [ h_{F_n}; F_n^\ast, 2^{-n}] \setminus \{ g_j : j < n \},
    \end{align*}
    \(\{ g_n : n \in \omega \}\) is an infinite family of pairwise distinct functions.
    
    We claim that \(\{ g_n : n \in \omega \} \not\in s\mathrm{CD}_{C_p(X)}\).
    Consider \(\mathbf 0\) and let \(x \in X\) and \(\varepsilon > 0\) be arbitrary.
    Then, since \(\{ U_n : n \in \omega \} \in \Lambda_X\), we can let \(M \in \omega\)
    be such that \(2^{-M} < \varepsilon\) and \(x \in U_M\).
    Moreover, note that \(A := \{ n \in \omega : n \geq M \wedge x \in U_n \}\)
    is an infinite set.
    Now consider any \(n \in A\) and observe that
    \begin{align*}
    &x \in U_n = U_{F_n, \langle F_j : j < n \rangle}
    = \tau(W_{F_n,\langle F_j : j < n \rangle}, n)^{-1}\left(-2^{-n}, 2^{-n} \right)
    = g_n^{-1}\left(-2^{-n}, 2^{-n}\right)\\
    &\implies |g_n(x)| < 2^{-n} \leq 2^{-M} < \varepsilon\\
    &\implies g_n \in [\mathbf 0; \{x\} , \varepsilon].
    \end{align*}
    Since the \(\{g_n:n\in\omega\}\) are pairwise distinct and, for every \(n \in A\),
    \(g_n \in [\mathbf 0; \{x\}, \varepsilon]\), we see that
    \([\mathbf 0; \{x\}, \varepsilon] \cap \{ g_n : n \in \omega \}\) is infinite.
    That is, \(\{ g_n : n \in \omega \} \not\in s\mathrm{CD}_{C_p(X)}\).

    Since \(\tau\) was arbitrary, we see that P2 does not have a winning Markov
    strategy in \(\mathsf G_1(\mathscr T_{C_p(X)}, s\mathrm{CD}_{C_p(X)})\).

    For the remaining implications, we will define, for \(g \in C_p(X)\), \(F \in X_{\mathrm{fin}}\), and \(n \in \omega\),
    \[B_{F,n} = \left[\mathbf{2^{-n}}; F, 2^{-n-2}\right]\]
    and \[W_{g,F,n} = g^{-1}\left(3 \cdot 2^{-n-2}, 5 \cdot 2^{-n-2}\right) \setminus \{\gamma(F)\}.\]

    First, we note that, if \(g \in B_{F,n}\), then \(F \subseteq W_{g,F,n} \in \mathscr T_X\).
    The fact that \(W_{g,F,n}\) is a proper open subset of \(X\) follows immediately from the continuity
    of \(g\) and that \(\gamma(F)\) is deleted from it.
    To see that \(F \subseteq W_{g,F,n}\), note that, for \(x \in F\),
    \[-2^{-n-2} < g(x) - 2^{-n} < 2^{-n-2} \implies 3 \cdot 2^{-n-2} < g(x) < 5 \cdot 2^{-n-2}.\]
    That is, \(x \in g^{-1}\left(3 \cdot 2^{-n-2}, 5 \cdot 2^{-n-2}\right)\).
    Since \(\gamma(F) \not\in F\), we see that \(F \subseteq W_{g,F,n}\).

    We also note that, if \(F \subseteq G\), \(F,G \in X_{\mathrm{fin}}\), and \(m < n\),
    then \(B_{F,m} \cap B_{G,n} = \varnothing\).
    Indeed, suppose \(g \in B_{F,m}\) and \(h \in B_{G,n}\).
    By the above computation, we observe that, for \(x \in F \subseteq G\),
    \[h(x) < 5 \cdot 2^{-n-2} < 6 \cdot 2^{-n-2} = 3 \cdot 2^{-n-1} \leq 3 \cdot 2^{-m-2} < g(x).\]
    In particular, \(g \neq h\), so \(B_{F,m} \cap B_{G,n} = \varnothing\).
    We will use these observations in the remainder of this proof.

    We prove that
    \[\mathrm{II} \uparrow \mathsf G_1(\mathscr T_{C_p(X)}, s\mathrm{CD}_{C_p(X)})
    \implies \mathrm{II} \uparrow \mathsf G_1(\mathcal N[X_{\mathrm{fin}}], \neg \Lambda_X).\]
    So suppose \(\hat{\tau}\) is a winning strategy for P2 in \(\mathsf G_1(\mathscr T_{C_p(X)}, s\mathrm{CD}_{C_p(X)})\).
    We define a strategy \(\tau\) for P2 in \(\mathsf G_1(\mathcal N[X_{\mathrm{fin}}], \neg \Lambda_X)\)
    as follows.
    For any \(\langle F_j : j \leq n \rangle \in X_{\mathrm{fin}}^{n+1}\), let, for each \(j \leq n\),
    \(G_j = \bigcup \{ F_\ell : \ell \leq j \} \in X_{\mathrm{fin}}\).
    Then, we can let \(g = \hat\tau\langle B_{G_j, j} : j \leq n \rangle \in B_{G_n,n}\) and define
    \[\tau\langle F_j : j \leq n \rangle = W_{g,G_n, n}.\]
    Since \(F_n \subseteq G_n \subseteq W_{g,G_n, n}\), this is a legal response.
    Hence, \(\tau\) is defined.

    We now argue that \(\tau\) is winning.
    So let \(\langle F_n : n \in \omega \rangle \in X_{\mathrm{fin}}^\omega\) be given and set,
    for each \(n \in \omega\), \(G_n = \bigcup \{ F_j : j \leq n \}\).
    Let, also, \(g_n = \hat\tau\langle B_{G_j, j} : j \leq n \rangle\) for each \(n \in \omega\).
    We note that, by the comment above, for \(m < n\), since \(G_m \subseteq G_n\), \(g_m \neq g_n\).
    That is, \(\{ g_n : n \in \omega \}\) consists of pairwise distinct functions.
    In particular, \(\{ g_n : n \in \omega \}\) is an infinite set.
    Moreover, since \(\hat\tau\) is winning, \(\{ g_n : n \in \omega \} \in s\mathrm{CD}_{C_p(X)}\).
    We claim that \[\{ \tau\langle F_j : j \leq n \rangle : n \in \omega \} = \{ W_{g_n,G_n,n} : n \in \omega \} \not\in \Lambda_X.\]
    Indeed, since \(\{ g_n : n \in \omega \} \in s\mathrm{CD}_{C_p(X)}\), we can find \(x \in X\) and \(\varepsilon > 0\)
    such that \([\mathbf 0; \{x\} , \varepsilon ] \cap \{ g_n : n \in \omega \}\) is finite.
    Hence, we can let \(M \in \omega\) be such that, for each \(n \geq M\),
    \(5 \cdot 2^{-n-2} \leq \varepsilon\) and \(g_n \not\in [\mathbf 0; \{x\} , \varepsilon]\).
    For each \(n \geq M\), we have that either \(g_n(x) \leq -\varepsilon < 0 < 3 \cdot 2^{-n-2}\) or
    \[g_n(x) \geq \varepsilon \geq 5 \cdot 2^{-n-2}.\]
    Consequently,
    \[x \not\in W_{g_n, G_n, n} = \tau \langle F_j : j \leq n \rangle\]
    for each \(n \geq M\).
    Hence, \[\{ \tau\langle F_j : j \leq n \rangle : n \in \omega \} = \{ W_{g_n,G_n,n} : n \in \omega \} \not\in \Lambda_X,\]
    and P2 has won.

    We now prove that
    \[\mathrm{I} \not\uparrow \mathsf G_1(\mathscr T_{C_p(X)}, s\mathrm{CD}_{C_p(X)})
    \implies \mathrm{I} \not\uparrow \mathsf G_1(\mathcal N[X_{\mathrm{fin}}], \neg \Lambda_X).\]
    So let \(\sigma\) be any strategy for P1 in \(\mathsf G_1(\mathcal N[X_{\mathrm{fin}}], \neg \Lambda_X)\).
    We will define a strategy \(\hat\sigma\) for P1 in \(\mathsf G_1(\mathscr T_{C_p(X)}, s\mathrm{CD}_{C_p(X)})\)
    as follows.
    Let \(G_0 = F_0 = \sigma\langle \rangle\) and set \(\hat\sigma\langle \rangle = B_{G_0,0}\).
    Note that, for \(g_0 \in \hat\sigma\langle \rangle\),
    \(F_0 \subseteq G_0 \subseteq W_{g_0,G_0,0} \in \mathscr T_X\).
    Now, for \(n \in \omega\), suppose we have defined \(\langle F_j : j \leq n \rangle\),
    \(\langle G_j : j \leq n \rangle\), \(\langle g_j : j \leq n \rangle\), and
    \(\langle W_{g_j, G_j, j} : j \leq n \rangle\) as above.
    We then consider \(F_{n+1} = \sigma \langle W_{g_j, G_j, j} : j \leq n \rangle\) and set
    \(G_{n+1} = G_n \cup F_{n+1}\).
    We then define
    \[\hat\sigma \langle g_j : j \leq n \rangle = B_{G_{n+1},n+1}.\]
    Note that, for \(g_{n+1} \in \hat\sigma \langle g_j : j \leq n \rangle = B_{G_{n+1},n+1}\),
    \(F_{n+1} \subseteq G_{n+1} \subseteq W_{g_{n+1}, G_{n+1}, n+1}\).
    This completes our definition of \(\hat\sigma\).

    Since \(\hat\sigma\) cannot be winning, there must exist some
    \(\langle F_n : n \in \omega \rangle\), \(\langle G_n : n \in \omega \rangle\),
    and \(\langle g_n : n \in \omega \rangle\) satisfying the relations above
    such that \(\{ g_n : n \in \omega \} \in s\mathrm{CD}_{C_p(X)}\).
    We claim that \(\{ W_{g_n, G_n, n} : n \in \omega \} \not\in \Lambda_X\) and, thus,
    is a legal run of the game by P2 against \(\sigma\) where P2 wins.
    Note that, by our definitions, \(\{ g_n : n \in \omega \}\) consists of
    pairwise distinct functions.
    Then, by the exact argument in the case for perfect-information strategies
    for P2 above, we see that \(\{ W_{g_n, G_n, n} : n \in \omega \} \not\in \Lambda_X\).

    Finally, we prove that
    \[\mathrm{I} \underset{\mathrm{pre}}{\not\uparrow} \mathsf G_1(\mathscr T_{C_p(X)}, s\mathrm{CD}_{C_p(X)})
    \implies \mathrm{I} \underset{\mathrm{pre}}{\not\uparrow} \mathsf G_1(\mathcal N[X_{\mathrm{fin}}], \neg \Lambda_X)\]
    via the contrapositive.
    So, suppose \(\mathrm{I} \underset{\mathrm{pre}}{\uparrow} \mathsf G_1(\mathcal N[X_{\mathrm{fin}}], \neg \Lambda_X)\)
    and note that, by duality and Theorem \ref{thm:SingleLambdaOpen},
    \begin{align*}
        \mathrm{I} \underset{\mathrm{pre}}{\uparrow} \mathsf G_1(\mathcal N[X_{\mathrm{fin}}], \neg \Lambda_X)
        &\iff \mathrm{II} \underset{\mathrm{mark}}{\uparrow} \mathsf G_1(\Omega_X, \Lambda_X)\\
        &\iff \mathrm{II} \underset{\mathrm{mark}}{\uparrow} \mathsf G_1(\mathcal O_X, \mathcal O_X).
    \end{align*}
    Then, \(X\) is countable by \cite[Corollary 4.18]{CCHMengerRothbergerSurvey}.
    So let \(X = \{ x_n : n \in \omega \}\) and \(F_n = \{ x_j : j \leq n \}\) for \(n \in \omega\).
    We claim that \(\langle B_{F_n,n} : n \in \omega \rangle\) is a predetermined
    winning strategy for P1 in \(\mathsf G_1(\mathscr T_{C_p(X)}, s\mathrm{CD}_{C_p(X)})\).
    So suppose \(g_n \in B_{F_n,n}\) for each \(n \in \omega\).
    Since the \(\{B_{F_n,n} : n \in \omega\}\) is a collection of pairwise disjoint open sets,
    \(\{ g_n : n \in \omega \}\) consists of pairwise distinct functions.
    From this, it is clear that, for any \(x \in X\) and \(\varepsilon > 0\),
    \([\mathbf 0; \{x\}, \varepsilon] \cap \{ g_n : n \in \omega \}\) is infinite, and,
    thus, that \(\{g_n:n\in\omega\} \not\in s\mathrm{CD}_{C_p(X)}\).
    That is, P1 has a predetermined winning strategy in \(\mathsf G_1(\mathscr T_{C_p(X)}, s\mathrm{CD}_{C_p(X)})\).
\end{proof}
By combining Lemma \ref{lem:LambdaFOBelow} and \ref{lem:sCDBelow}, we obtain:
\begin{theorem} \label{thm:MainTheorem}
    For any infinite Tychonoff space \(X\),
    \[\mathsf G_1(\mathcal O_X, \mathcal O_X)
    \dualarrows
    \mathsf G_1(\mathcal N[X_{\mathrm{fin}}], \neg \mathcal O_X)
    \leftrightarrows
    \mathsf G_1(\mathcal N[X_{\mathrm{fin}}], \neg \Lambda_X)
    \equiv \mathsf G_1(\mathscr T_{C_p(X)}, s\mathrm{CD}_{C_p(X)}).\]
    In particular, the Rothberger game on \(X\) is perfect- and Markov-information
    dual to the strong closed discrete game on \(C_p(X)\),
    for infinite Tychonoff \(X\).
\end{theorem}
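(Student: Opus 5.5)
The plan is to assemble the chain from Lemma \ref{lem:LambdaFOBelow} and Lemma \ref{lem:sCDBelow}, and then read off the duality claim. Lemma \ref{lem:LambdaFOBelow} already supplies the first two links, namely
\[\mathsf G_1(\mathcal O_X, \mathcal O_X) \dualarrows \mathsf G_1(\mathcal N[X_{\mathrm{fin}}], \neg \mathcal O_X) \leftrightarrows \mathsf G_1(\mathcal N[X_{\mathrm{fin}}], \neg \Lambda_X).\]
It also gives \(\mathsf G_1(\mathcal N[X_{\mathrm{fin}}], \neg \mathcal O_X) \leq_{\mathrm{II}} \mathsf G_1(\mathscr T_{C_p(X)}, s\mathrm{CD}_{C_p(X)})\). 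Composing this with \(\mathsf G_1(\mathcal N[X_{\mathrm{fin}}], \neg \Lambda_X) \leq_{\mathrm{II}} \mathsf G_1(\mathcal N[X_{\mathrm{fin}}], \neg \mathcal O_X)\), which is part of \(\leftrightarrows\), yields
\[\mathsf G_1(\mathcal N[X_{\mathrm{fin}}], \neg \Lambda_X) \leq_{\mathrm{II}} \mathsf G_1(\mathscr T_{C_p(X)}, s\mathrm{CD}_{C_p(X)}).\]
This step uses transitivity of \(\leq_{\mathrm{II}}\). That is immediate, since \(\leq_{\mathrm{II}}\) is a conjunction of implications at fixed strategy levels, and \(\leq_{\mathrm{II}}^+\) contains \(\leq_{\mathrm{II}}\). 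The reverse inequality is exactly Lemma \ref{lem:sCDBelow}. Together they give the equivalence \(\equiv\).

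For the ``in particular'' clause, I would chase the strategy levels explicitly. Perfect-information duality between \(\mathsf G_1(\mathcal O_X,\mathcal O_X)\) and \(\mathsf G_1(\mathcal N[X_{\mathrm{fin}}],\neg\mathcal O_X)\) gives two statements:
\begin{itemize}
\item \(\mathrm I\uparrow\) one game iff \(\mathrm{II}\uparrow\) the other;
\item \(\mathrm{II}\uparrow\) one game iff \(\mathrm{I}\uparrow\) the other.
\end{itemize}
Markov-information duality similarly pairs \(\mathrm{II}\) Markov with \(\mathrm I\) predetermined. The relations \(\leftrightarrows\) and \(\equiv\) transfer each of these four statements: \(\mathrm I\) perfect, \(\mathrm I\) predetermined, \(\mathrm{II}\) perfect and \(\mathrm{II}\) Markov winning strategies. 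Along the chain, \(\mathsf G_1(\mathcal N[X_{\mathrm{fin}}],\neg\mathcal O_X)\) can therefore be replaced by \(\mathsf G_1(\mathscr T_{C_p(X)}, s\mathrm{CD}_{C_p(X)})\) at each of these levels. This yields the perfect- and Markov-information duality of the Rothberger game with the strong closed discrete game.

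There is no real obstacle, since all the work was done in the two lemmas. The only point needing care is bookkeeping. We must make sure that \(\equiv\) (rather than \(\leftrightarrows\)) still covers every strategy level used in the definitions of perfect- and Markov-information duality. Both of those notions involve only the perfect, predetermined and Markov levels, which lie within \(\leq_{\mathrm{II}}\). The constant-strategy level is not needed, so \(\equiv\) suffices.
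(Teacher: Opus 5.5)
Your proposal is correct and matches the paper, which obtains the theorem simply by combining Lemma \ref{lem:LambdaFOBelow} (the first two links together with \(\mathsf G_1(\mathcal N[X_{\mathrm{fin}}], \neg \Lambda_X) \leq_{\mathrm{II}} \mathsf G_1(\mathscr T_{C_p(X)}, s\mathrm{CD}_{C_p(X)})\)) with the reverse inequality from Lemma \ref{lem:sCDBelow}. Your level-by-level bookkeeping for the ``in particular'' clause is exactly what the paper leaves implicit: perfect and predetermined strategies for P1 and perfect and Markov strategies for P2 all lie within \(\leq_{\mathrm{II}}\), so the constant-strategy level is not needed.
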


\section*{Acknowledgments}

We would first like to thank Lucas Chiozini for sharing his paper \cite{Chiozini} with us and inspiring this work
to be done.
We would also like to thank David Guerrero S\'{a}nchez for drawing our attention to \cite{GuerreroSanchezTkachuk2017}
while at the 40th Summer Conference on Topology and Its Applications at the University of Split, Croatia.
The first named author is grateful to the National Science Foundation for supporting his travel to the above-referenced
conference under the DMS Grant Number 2555991.

Finally, we would like to acknowledge the use of ChatGPT 5.4 in assisting with general literature review and ChatGPT 5.6
in helping suggest a proof sketch of an initial draft of the proof of Theorem \ref{thm:CountableNotMarkoveLambdaRoth} in the particular
case for the discrete countable space, and for suggesting improvements to the proof of Lemma \ref{lem:sCDBelow}.
We did not, however, use any AI in the writing of this manuscript.

\providecommand{\bysame}{\leavevmode\hbox to3em{\hrulefill}\thinspace}
\providecommand{\MR}{\relax\ifhmode\unskip\space\fi MR }
\providecommand{\MRhref}[2]{%
  \href{http://www.ams.org/mathscinet-getitem?mr=#1}{#2}
}
\providecommand{\href}[2]{#2}

\end{document}